\documentclass[12pt,reqno]{amsart}
\usepackage{amssymb}
\usepackage{amsfonts}
\usepackage{amsmath}
\usepackage{amsmath}
\usepackage{verbatim}
\usepackage{color}
\usepackage[shortlabels] {enumitem}
\usepackage[active]{srcltx}
\include{srctex}
\setcounter{MaxMatrixCols}{10}
\usepackage[
bookmarks=true,         
bookmarksnumbered=true, 
colorlinks=true,
pdfstartview=FitV,
linkcolor=blue, citecolor=blue, urlcolor=blue]{hyperref}
\usepackage[
	alphabetic,abbrev,msc-links]{amsrefs}

\usepackage{fullpage}

\allowdisplaybreaks[4]

\newtheorem{theorem}{Theorem}[section]

\newtheorem{conjecture}[theorem]{Conjecture}

\newtheorem{lemma}[theorem]{Lemma}

\newtheorem{proposition}[theorem]{Proposition}
\theoremstyle{remark}
\newtheorem{remark}[theorem]{Remark}

\newtheorem{definition}[theorem]{Definition}
\newtheorem{example}[theorem]{Example}

\let\Section=\section
\def\section{\setcounter{equation}{0}\Section}

\newcommand{\1}{\mathbf{1}}

\def\RR{\mathbb{R}}

\def\EE{\mathbb{E}}
\def\PP{\mathbb{P}}
\def\CC{\mathbb{C}}
\def\HH{\mathfrak{H}}
\def\Z{\mathcal{Z}}
\def\K{\mathcal{K}}

\def\U{\mathcal{U}}
\def\A{\mathcal{A}}
\def\B{\mathcal{B}}

\def\css{{\mathcal S}}
\def\cee{{\mathcal E}}
\def\cmm{{\mathcal M}}
\def\caa{{\mathcal A}}
\def\cff{{\mathcal F}}
\def\cgg{{\mathcal G}}

\def\lt{\left}
\def\rt{\right}
\def\HH{\mathfrak{H}}
\def\supp{\mathrm{supp}\,}
\begin{document}

\title{Spatial asymptotic of the stochastic heat equation with compactly supported initial data}
\author{Jingyu Huang}
\author{Khoa L\^e}
\address{Department of Mathematics, University of Utah, Salt Lake City, Utah, 84112, USA}
\address{Department of Mathematical and Statistical Sciences, University of Alberta, 632 Central Academic, Edmonton , AB T6G 2R3, Canada}
\thanks{This work is partially supported by NSF Grant no. 0932078000}
\keywords{parabolic Anderson model, Feynman-Kac representation, Brownian motion, spatial asymptotic}
\subjclass[2010]{60H15, 60G15, 60F10, 60G60}
\date{\today}
\begin{abstract}
	We investigate the growth of the tallest peaks of random field solutions to the parabolic Anderson models over concentric balls as the radii approach infinity. The noise is white in time and correlated in space. The spatial correlation function is either bounded or non-negative satisfying Dalang's condition. The initial data are Borel measures with compact supports, in particular, include Dirac masses. The results obtained are related to those of \cites{MR3098071,MR3474477} where constant initial data are considered. 
\end{abstract}
\maketitle
\section{Introduction} 
\label{sec:introduction}
	We consider the stochastic heat equation in $\RR^\ell$
	\begin{equation}\label{eqn:SHE}
		 \frac {\partial u}{\partial t} =\frac12\Delta u+u \dot{W}\, ,\quad u(0,\cdot)=u_0(\cdot)
	\end{equation}
	where  $t\ge 0$, $x\in \RR^\ell$ $(\ell\ge1)$ and $u_0$ is a Borel measure. Herein, $W$ is  a centered Gaussian field, which is white in time and it has a correlated spatial covariance. More precisely, we assume that the noise $W$ is described by a centered  Gaussian family  $W=\{ W(\phi), \phi \in C_c^\infty(\RR_+\times\RR^\ell)\}$, with covariance
	\begin{equation}\label{eqn:fouriercov}
		\EE [W(\phi)W(\psi)]=\frac1{(2 \pi)^\ell} \int_0^\infty\int_{\RR^\ell} \cff\phi(s,\xi)\overline{\cff\psi(s,\xi)}\mu(\xi) d \xi ds,
	\end{equation}
	where $\mu$ is non-negative measurable function and $\cff$ denotes the Fourier transform in the spatial variables. To avoid trivial situations, we assume that $\mu$ is not identical to zero. The inverse Fourier transform of $\mu$ is in general a distribution defined formally by the expression
	\begin{equation}\label{eqn:gamspec}
		\gamma(x)=\frac1{(2 \pi)^\ell}\int_{\RR^\ell} e^{i \xi  \cdot x}\mu(\xi)d \xi\,.
	\end{equation}
	If $\gamma$ is a locally integrable function, then it is non-negative definite and \eqref{eqn:fouriercov} can be written in Cartesian coordinates
	\begin{equation}\label{eqn:cov}
		\EE [W(\phi)W(\psi)]=\int_0^\infty\iint_{\RR^{2\ell}} \phi(s,x)\psi(s,y)\gamma(x-y)dxdyds\,.
	\end{equation}
	The following two distinct hypotheses on the spatial covariance of $W$ are considered throughout the paper.
	\begin{enumerate}[(H.\arabic*)]
		\item\label{Cov:bounded} $\mu$ is integrable, 
			that is $\int_{\RR^\ell}\mu(\xi)d \xi<\infty$. 
			In this case, the inverse Fourier transform of $\mu(\xi)$ exists and is a bounded continuous function $\gamma$. Assume in addition that $\gamma$ is $\kappa$-H\"older continuous  function at 0.
		\item\label{Cov:Dalang} $\mu$ satisfies the following conditions:
		\begin{enumerate}[(H.2a)]
			\item  The inverse Fourier transform of $\mu(\xi)$ is either the Dirac delta mass at 0 or a nonnegative  locally integrable function $\gamma$.
			\item\label{h2a} 
			\begin{equation}\label{k5}
				\int_{ \RR^\ell}\frac {\mu(\xi) }{1+ |\xi|^2}d \xi<\infty\,.
			\end{equation}
			\item\label{con:scaling} (Scaling) There exists $\alpha\in(0,2)$ such that $\mu(c \xi)=c^{\alpha-\ell}\mu(\xi)$ for all positive numbers $c$.
		\end{enumerate}
	\end{enumerate}
	Hereafter, we denote by $|\cdot|$ the Euclidean norm in $\RR^\ell$ and by $x\cdot y$ the usual inner product between two vectors $x,y$ in $\RR^\ell$. 
	Condition \ref{h2a} is known as Dalang's condition and is sufficient for existence and uniqueness of a random field solution.
	If $\gamma$ exists as a function, condition \ref{con:scaling} induces the scaling relation $\gamma(c x)=c^{-\alpha}\gamma(x)$ for all $c>0$. 

	Equation \eqref{eqn:SHE} with noise satisfying condition \ref{Cov:Dalang} was introduced by Dalang in \cite{Dal}. In \cite{HLN15}, for a large class of initial data, we show that equation \eqref{eqn:SHE} has a unique random field solution under the hypothesis \ref{Cov:Dalang}. Under hypothesis \ref{Cov:bounded}, we note that $\gamma$ may be negative, but proceeding as in \cite{Huang}, a simple Picard iteration argument gives the existence and uniqueness of the solution. In addition, in both cases, the solution has finite moments of all positive orders. We give a  few examples of  covariance structures which are usually considered in literatures.
	\begin{example}\label{expl:cov1}
		Covariance functions satisfying \ref{Cov:Dalang} includes the Riesz kernel $\gamma(x)=|x|^{-\eta}$, with $0<\eta <2\wedge \ell$, the space-time white noise in dimension one, where $\gamma =\delta_0$, the Dirac delta mass at 0, and the multidimensional fractional Brownian motion, where $\gamma(x)= \prod_{i=1}^\ell H_i (2H_i-1) |x^i|^{2H_i-2}$, assuming $\sum_{i=1}^\ell H_i >\ell-1$ and $H_i >\frac 12$ for $i=1,\dots, \ell$. Covariance functions satisfying \ref{Cov:bounded} includes $e^{-|x|^2}$ and the inverse Fourier transform of $|\xi|^2 e^{-|\xi|^2}$. 
	\end{example}
	Suppose for the moment that $\dot{W}$ is a space-time white noise and $u_0$ is a function satisfying
	\begin{equation}\label{eq:u0 lower upper bd}
	 	c\le u_0(x)\le C,\mbox{ for some positive numbers } c,C.
	\end{equation} 
	It is first noted in \cite{MR3098071} that there exist positive constants $c_1,c_2$ such that almost surely
	\begin{equation}\label{eqn:cuC}
		c_1\le \limsup_{R\to\infty}(\log R)^{-\frac23}{\log\sup_{|x|\le R}u(t,x)} \le c_2\,.
	\end{equation}
	Later Xia Chen shows in \cite{MR3474477} that indeed the precise almost sure limit can be computed, namely, 
	\begin{equation}\label{eqn:lim1}
		\lim_{R\to\infty}(\log R)^{-\frac23}{\log\sup_{|x|\le R}u(t,x)}=\frac34 \lt(\frac{2t}3\rt)^{\frac13}\quad \mathrm{a.s.}
	\end{equation}
	One of the key ingredients in showing \eqref{eqn:lim1} is the following moment asymptotic result
	\begin{equation}\label{eqn:lim2}
		\lim_{m\to\infty}m^{-3}\log\EE u(t,x)^m=\frac t{24}\,.
	\end{equation}
	Thanks to the scaling property of the space-time white noise, Xia Chen has managed to derive \eqref{eqn:lim2} from the following long term asymptotic result
	\begin{equation}\label{eqn:lim3}
		\lim_{t\to\infty}\frac1t\log\EE u(t,x)^m= \cee_m
	\end{equation}
	where the constant $\cee_m$ grows as $\frac1{24}m^3$ when $m\to\infty$. 

	Under condition \eqref{eq:u0 lower upper bd}, analogous results for other kinds of noises are also obtained in \cite{MR3474477}. More precisely, for noises satisfying \ref{Cov:bounded}
	\begin{equation}
		\lim_{R\to\infty}(\log R)^{-\frac12}\log \sup_{|x|\le R}u(t,x)=\sqrt{2\ell \gamma(0)t}\quad \mathrm{a.s.}\,,
	\end{equation}
	and for noises satisfying \ref{Cov:Dalang}, 
	\begin{equation}
			\lim_{R\to\infty}(\log R)^{-\frac2{4- \alpha}}\sup_{|x|\le R}\log u(t,x)=\frac{4- \alpha}2\ell^{\frac2{4- \alpha}} \lt(\frac{\cee_H(\gamma)}{2- \alpha}t \rt)^{\frac{2- \alpha}{4- \alpha}} \quad \mathrm{a.s.}\,,
	\end{equation}
	where the variational quantity $\cee_H(\gamma)$ is introduced in \eqref{eqn:cee}. 

	On the other hand, it is known that equation \eqref{eqn:SHE} has a unique random field solution under either \ref{Cov:bounded} or \ref{Cov:Dalang} provided that $u_0$ satisfies
	\begin{equation}\label{con:u0}
		p_t*|u_0|(x) <\infty\quad\forall t>0,x\in\RR^\ell\,.
	\end{equation}
	Hence, condition \eqref{eq:u0 lower upper bd} excludes other initial data of interests such as compactly supported measures. It is our purpose in the current paper to investigate the almost sure spatial asymptotic of the solutions corresponding to these initial data. 

	Upon reviewing the method in obtaining \eqref{eqn:lim1} described previously, one first seeks for an analogous result to \eqref{eqn:lim3} for general initial data. In fact, it is noted in \cite{HLN15} that for every $u_0$ satisfying \eqref{con:u0}, one has
	\begin{equation}\label{eqn:lim4}
		\lim_{t\to\infty}\frac1t\log\sup_{x\in\RR} \EE\lt(\frac{u(t,x)}{p_t*u_0(x)} \rt)^m=\cee_m,	
	\end{equation}
	where $\cee_m$ is a constant whose asymptotic as $m\to\infty$ is known. 
	It is suggestive from \eqref{eqn:lim4} that with a general initial datum, one should normalized $u(t,x)$ in \eqref{eqn:lim1} (and \eqref{eqn:lim2}) by the factor $p_t*u_0(x)$. Therefore, we anticipate the following almost sure spatial asymptotic result.
	\begin{conjecture}\label{conjecture}
		Assume that $u_0$ satisfies \eqref{con:u0}. Under \ref{Cov:bounded} we have
		\begin{equation}\label{coj:h1}
			\lim_{R\to\infty}(\log R)^{-\frac12} \sup_{|x|\le R}\lt(\log u(t,x)-\log p_t*u_0(x) \rt)=\sqrt{2\ell \gamma(0)t}\quad \mathrm{a.s.}
		\end{equation}
		Under \ref{Cov:Dalang}, we have
		\begin{equation}\label{coj:h2}
				\lim_{R\to\infty}(\log R)^{-\frac2{4- \alpha}}\sup_{|x|\le R}\lt(\log u(t,x)-\log p_t*u_0(x) \rt)=\frac{4- \alpha}2\ell^{\frac2{4- \alpha}} \lt(\frac{\cee_H(\gamma)}{2- \alpha}t \rt)^{\frac{2- \alpha}{4- \alpha}} \quad \mathrm{a.s.}
		\end{equation}	 
		In the particular case of space-time white noise, we conjecture that
		\begin{equation}\label{eqn:lim5}
			\lim_{R\to\infty}(\log R)^{-\frac23}{\sup_{|x|\le R}(\log u(t,x)-\log p_t*u_0(x))}={\frac34 \lt(\frac{2t}3\rt)^{\frac13} }\quad \mathrm{a.s.}
		\end{equation}
	\end{conjecture}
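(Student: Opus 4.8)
The plan is to normalize the solution by the heat flow of the initial datum and to reduce all three assertions to a single moment estimate. Write $v(t,x):=u(t,x)\big/\big(p_t*u_0(x)\big)$, which by \eqref{con:u0} and positivity of $u$ is well defined and strictly positive on $\RR^\ell$. Since $R\mapsto\sup_{|x|\le R}\log v(t,x)$ is nondecreasing it is enough to prove matching almost sure upper and lower bounds along a geometric sequence $R_n=e^{n}$ and to interpolate between consecutive radii. All three cases \eqref{coj:h1}, \eqref{coj:h2}, \eqref{eqn:lim5} have the common form
\begin{equation}\label{eqn:unified}
\lim_{R\to\infty}(\log R)^{-\frac{\beta-1}{\beta}}\sup_{|x|\le R}\log v(t,x)=\kappa(t)\qquad\mathrm{a.s.},
\end{equation}
with $\beta=2$ under \ref{Cov:bounded}, $\beta=\frac{4-\alpha}{2-\alpha}$ under \ref{Cov:Dalang}, and $\beta=3$ for space--time white noise. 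The engine is the \emph{uniform moment asymptotic}
\begin{equation}\label{eqn:momasymp}
\lim_{m\to\infty}m^{-\beta}\log\sup_{x\in\RR^\ell}\EE\big[v(t,x)^m\big]=c_\beta(t),
\end{equation}
where $c_2(t)=\tfrac12\gamma(0)t$ under \ref{Cov:bounded}, $c_\beta(t)=\tfrac12\cee_H(\gamma)t$ under \ref{Cov:Dalang}, and $c_3(t)=\tfrac{t}{24}$ for space--time white noise (compare \eqref{eqn:lim2}); the spatial constant is recovered through the Legendre-type relation
\begin{equation}\label{eqn:legendre}
\kappa(t)=\Big(\tfrac{\beta\,\ell}{\beta-1}\Big)^{\frac{\beta-1}{\beta}}\big(\beta\,c_\beta(t)\big)^{\frac1\beta},
\end{equation}
which one checks reproduces the right-hand sides of \eqref{coj:h1}, \eqref{coj:h2}, \eqref{eqn:lim5}. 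For constant initial data \cite{MR3474477} deduces \eqref{eqn:momasymp} from the long-time asymptotic \eqref{eqn:lim3} via scaling; with compactly supported $u_0$ that route is closed, so I would prove \eqref{eqn:momasymp} directly from the Feynman--Kac moment formula — expressing $\EE[u(t,x)^m]$ as the expectation over $m$ independent Brownian motions of $\prod_{j=1}^m p_t*u_0(x+B^j_t)$ times $\exp\!\big(\sum_{i<j}\int_0^t\gamma(B^i_s-B^j_s)\,ds\big)$, read as mutual intersection local time when $\gamma=\delta_0$ — then dividing by $(p_t*u_0(x))^m$, using \eqref{con:u0} to control $\log\big(p_t*u_0(x+B_t)/p_t*u_0(x)\big)$ along paths and show the $u_0$-dependent factors contribute only $e^{o(m^\beta)}$, and recognizing the same variational problem as in the constant-data case, uniformly in $x$.

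For the upper bound in \eqref{eqn:unified}, fix $R=R_n$ and cover $\{|x|\le R\}$ by $O\big(R^\ell\e^{-\ell}\big)$ balls of radius $\e=\e(R)$. A Kolmogorov-type spatial regularity estimate for $v(t,\cdot)$ — derived from the chaos-expansion/Malliavin bounds of \cite{HLN15}, with the Gaussian prefactor $p_t*u_0$ treated separately through its explicit form — gives $\EE|v(t,x)-v(t,y)|^m\le (C_t\,m^{a})^{m}|x-y|^{bm}$ for suitable $a,b>0$ uniformly on the relevant region. Combining this with \eqref{eqn:momasymp} by Markov's inequality and a union bound yields, for a threshold $\Lambda>0$,
\begin{equation}\label{eqn:ub-step}
\PP\Big(\sup_{|x|\le R}\log v(t,x)\ge\Lambda\Big)\le C\,R^\ell\exp\!\big(c_\beta(t)m^\beta-m\Lambda+o(m^\beta)\big)+(\text{regularity error}).
\end{equation}
Optimizing forces $m\asymp\Lambda^{1/(\beta-1)}$ and turns the main term into $\exp\!\big(\ell\log R-\tfrac{\beta-1}{\beta}(\beta c_\beta(t))^{-\frac1{\beta-1}}\Lambda^{\frac{\beta}{\beta-1}}(1+o(1))\big)$; taking $\Lambda=(1+\delta)\kappa(t)(\log R_n)^{(\beta-1)/\beta}$ makes the right-hand side of \eqref{eqn:ub-step} summable in $n$, so Borel--Cantelli and monotonicity in $R$ give $\limsup_{R}(\log R)^{-(\beta-1)/\beta}\sup_{|x|\le R}\log v(t,x)\le\kappa(t)$ a.s.

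For the lower bound — where I expect the real work to lie — the strategy is localization together with many near-independent tall peaks. Choose $N_R\asymp R^\ell$ points $x_1,\dots,x_{N_R}\in\{|x|\le R\}$ pairwise separated by a growing correlation length $\rho_R$; decompose $W$ into independent pieces $W^{(i)}$ whose spatial supports are neighborhoods of the $x_i$, plus a remainder, let $u^{(i)}$ solve \eqref{eqn:SHE} driven by $W^{(i)}$ only, and set $v^{(i)}(t,x_i)=u^{(i)}(t,x_i)/(p_t*u_0(x_i))$. Using the decay of $\gamma$ (finiteness of $\int_{\RR^\ell}\mu(\xi)(1+|\xi|^2)^{-1}d\xi$ under \ref{Cov:Dalang}, decay of the bounded $\gamma$ at infinity under \ref{Cov:bounded}) one shows $\log v(t,x_i)=\log v^{(i)}(t,x_i)+o\big((\log R)^{(\beta-1)/\beta}\big)$ with overwhelming probability, while the $v^{(i)}(t,x_i)$ are independent. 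It then remains to prove the matching one-point lower bound
\begin{equation}\label{eqn:lb-one}
\PP\Big(\log v^{(i)}(t,x_i)\ge(1-\delta)\kappa(t)(\log R)^{(\beta-1)/\beta}\Big)\ge R^{-\ell+o(1)},
\end{equation}
which I would obtain by a Cameron--Martin shift of $W^{(i)}$ installing the optimal profile of the variational problem behind \eqref{eqn:momasymp}, or equivalently by a Feynman--Kac lower estimate in which the Brownian motion is conditioned to reach $\supp u_0$ — the crucial point being that the probabilistic cost of that conditioning is exactly $p_t*u_0(x_i)$ up to subexponential corrections, so after normalization the noise-interaction gain matches the constant-data case, uniformly as $|x_i|\to\infty$. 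Granting \eqref{eqn:lb-one}, $\sum_i\PP(\cdots)\to\infty$, and independence with the second Borel--Cantelli lemma gives $\liminf$ of the normalized supremum $\ge(1-\delta)\kappa(t)$ a.s.; letting $\delta\downarrow0$ closes the argument. The two genuine difficulties are thus: (i) establishing \eqref{eqn:momasymp} uniformly in $x$ without scaling, i.e.\ controlling the interaction between the minimizer of the variational problem and the factors $p_t*u_0(x+B^j_t)$; and (ii) the estimate \eqref{eqn:lb-one}, where one must show that steering the Feynman--Kac path into the compact set $\supp u_0$ is paid for precisely by $p_t*u_0(x_i)$. If the full conjecture resists, a natural first target is \eqref{eqn:unified} for $u_0=\delta_0$ and for nonnegative compactly supported measures with bounded density, where $p_t*u_0$ is comparable to a single Gaussian and the bookkeeping above simplifies considerably.
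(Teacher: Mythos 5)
The statement you are addressing is Conjecture \ref{conjecture}, which the paper explicitly leaves open; the paper only establishes the upper bounds in \eqref{coj:h1} and \eqref{coj:h2} for compactly supported measures (Theorem \ref{thm:H23Z}), the full limit for Dirac data under \ref{Cov:bounded} (Theorem \ref{thm:H1Z}), and a substitute statement for the mollified solutions $u_\epsilon$ (Theorem \ref{thm:uep}). Your upper-bound half is sound and coincides with the paper's route: the uniform-in-$x$ moment bound follows from \eqref{ineq:upB}, which dominates the normalized moments by a Brownian-bridge exponential functional independent of $x$ and of $u_0$, and your chaining/union-bound/Borel--Cantelli argument is essentially Theorem \ref{thm:Z up}.

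The gap is in your lower bound, and it is precisely the obstruction the paper identifies at the beginning of Subsection \ref{sub:low}. Your plan decomposes $W$ into independent pieces supported near well-separated points $x_i$ and asserts that $\log v(t,x_i)$ is, up to errors $o((\log R)^{(\beta-1)/\beta})$, a functional of the piece near $x_i$ alone. For constant initial data this is the content of the localized equation \eqref{eq:U local}, where the kernel $p_{t-s}(x-y)$ concentrates in a fixed ball around $x$. For $u_0=\delta(\cdot-x_0)$ the normalized field satisfies \eqref{tmp610}, whose kernel $p_{s(t-s)/t}\lt(y-x_0-\frac st(x-x_0)\rt)$ concentrates along the entire segment from $x_0$ (at $s=0$) to $x_i$ (at $s=t$); equivalently, the Feynman--Kac exponent in \eqref{eqn:FKZ} samples the noise along a bridge that must travel from $\supp u_0$ to $x_i$. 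The noise responsible for a tall peak at $x_i$ therefore lives on a tube of macroscopic length of order $R$, not on a neighborhood of $x_i$, so no fixed localization domain is available, and all these tubes meet near $\supp u_0$; hence the fields $v^{(i)}$ you construct are not independent unless one first proves that the noise away from $x_i$ contributes negligibly --- which is the unproven localization step, not a consequence of the decay of $\gamma$. Your one-point estimate (that the cost of steering the path into $\supp u_0$ is exactly $p_t*u_0(x_i)$ up to corrections $e^{o((\log R)^{(\beta-1)/\beta})}$, uniformly over $R^{\ell}$ points) is exactly where the difficulty sits and is asserted rather than proved: a path constrained to reach $x_0$ cannot sit in a small ball around $x_i$ realizing the optimal profile of the variational problem, so recovering the constant-data constant requires a genuinely new argument. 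The paper's partial workaround (Theorem \ref{thm:low} together with Lemma \ref{lem:H1}) abandons localization altogether: it conditions on the Brownian bridges, treats the family $\{\xi_m(x_0,y_j)\}_j$ as a Gaussian field, verifies that its correlations are uniformly small for separated $y_j$, and applies a Gaussian maximal inequality --- and even this only yields the lower bound for the regularized solutions $u_\epsilon$, uniformly in $\epsilon\in(0,1)$, not for $u$ itself. Unless you supply the missing localization (or an alternative decoupling) for \eqref{tmp610}, your argument establishes the upper bounds but not the conjectured limits.
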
 
	In the case of space-time white noise, note that if $u_0$ satisfies the condition \eqref{eq:u0 lower upper bd}, \eqref{eqn:lim5} is no different than \eqref{eqn:lim1}. On the other hand, if $u_0$ is a Dirac delta mass at $x_0$, \eqref{eqn:lim5} precisely describes the spatial asymptotic of $\log u(t,x)$: at large spatial sites, $\log u(t,x)$ is concentrated near a logarithmic perturbation of the parabola $-\frac{1}{2t} (x-x_0)^2$. More precisely, \eqref{eqn:lim5} with this specific initial datum reduces to
	\begin{equation}\label{eqn:lim6}
		\lim_{R\to\infty}(\log R)^{-\frac23} \sup_{|x|\le R}\lt(\log u(t,x)+\frac{(x-x_0)^2}{2t} \rt)={\frac34 \lt(\frac{2t}3\rt)^{\frac13}}\,.
	\end{equation}
	While a complete answer for Conjecture \ref{conjecture} (including \eqref{eqn:lim6}) is still undetermined, the current paper offers partial results, focusing on initial data with compact supports, especially Dirac masses. 
	To unify the notation, we denote 
\begin{equation}\label{eq:unify notation}
 	\bar \alpha=
 	\lt\{
 		\begin{array}{ll}
 			0&\mbox{if \ref{Cov:bounded} holds},
 			\\\alpha&\mbox{if \ref{Cov:Dalang} holds},
 		\end{array}
 	\rt.
 	\quad\mbox{and}\quad
 	\cee=
 	\lt\{
 		\begin{array}{ll}
 			\gamma(0)&\mbox{if \ref{Cov:bounded} holds},
 			\\\cee_H(\gamma)&\mbox{if \ref{Cov:Dalang} holds},
 		\end{array}
 	\rt.
\end{equation} 
	where the variational quantity $\cee_H(\gamma)$ is introduced below in \eqref{eqn:cee}. 
	For bounded covariance functions, we obtain the following result.
	\begin{theorem}\label{thm:H1Z}
		Assume that \ref{Cov:bounded} holds and $u_0=\delta(\cdot - x_0)$ for some $x_0\in\RR^\ell$.
		Then \eqref{coj:h1} holds.
	\end{theorem}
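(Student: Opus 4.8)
The plan is to prove \eqref{coj:h1} by establishing matching upper and lower bounds for $\sup_{|x|\le R}\log Z(t,x)$, where I write $Z(t,x):=u(t,x)/p_t(x-x_0)$; by the spatial stationarity of $W$ I may assume $x_0=0$. Throughout I will use two elementary consequences of \ref{Cov:bounded}: since $\gamma$ is the Fourier transform of the finite nonnegative measure $\mu$, it satisfies $\gamma(y)\le\gamma(0)$ for all $y$; and positive-definiteness combined with $\kappa$-Hölder continuity at $0$ upgrades to a \emph{global} bound $|\gamma(y)-\gamma(y')|^2\le2\gamma(0)\big(\gamma(0)-\gamma(y-y')\big)\le C|y-y'|^{\kappa}$, while by the Riemann--Lebesgue lemma $\gamma(y)\to0$ as $|y|\to\infty$.

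For the upper bound I would start from the Feynman--Kac moment formula for Dirac initial data: for $z_1,\dots,z_n\in\RR^\ell$,
\begin{equation*}
 \EE\Big[\prod_{i=1}^n Z(t,z_i)\Big]=\EE\Big[\exp\Big(\sum_{1\le i<j\le n}\int_0^t\gamma\big(\tfrac{t-s}{t}(z_i-z_j)+\beta^i_s-\beta^j_s\big)\,ds\Big)\Big],
\end{equation*}
where $\beta^1,\dots,\beta^n$ are independent Brownian bridges on $[0,t]$ from $0$ to $0$ — the $\delta$-masses at $0$ turn each driving Brownian motion into a bridge, and the shared endpoint makes the right-hand side depend on the $z_i$ only through the differences $z_i-z_j$. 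Taking all $z_i$ equal shows that $\EE[Z(t,x)^m]$ does not depend on $x$ and is at most $\exp\big(\tfrac12\gamma(0)t\,m(m-1)\big)$ by $\gamma\le\gamma(0)$; an exponential Chebyshev bound with even $m$, optimized in $m$, then gives $\PP(Z(t,x)>e^{\lambda})\le\exp(-\lambda^2/(2\gamma(0)t))$ for every $x$ and all large $\lambda$. The same formula with the global Hölder bound on $\gamma$ gives increment estimates $\EE[(Z(t,x)-Z(t,x'))^{2m}]\le C_{m,t}\,|x-x'|^{m\kappa/2}$ for $|x-x'|\le1$, again uniform in $x$ because the left side depends only on $x-x'$; Kolmogorov's criterion then furnishes a continuous version of $Z(t,\cdot)$ whose modulus of continuity over $\{|x|\le R\}$ is dominated by a random constant with moments polynomial in $R$. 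Fixing $\varepsilon>0$, I would cover $\{|x|\le R\}$ by $\asymp R^{\ell(1+\varepsilon)}$ balls of radius $R^{-\varepsilon}$ and put $\lambda_R=(1+2\varepsilon)\sqrt{2\ell\gamma(0)t\log R}$: a union bound over the ball centres bounds the relevant probability by $R^{\ell(1+\varepsilon)-(1+2\varepsilon)^2\ell}$, which is summable along $R=2^n$ since $(1+\varepsilon)<(1+2\varepsilon)^2$, while the oscillation of $Z$ over each ball is a negative power of $R$ and hence negligible next to $e^{\lambda_R}$ after taking logarithms. Borel--Cantelli and $\varepsilon\downarrow0$ give $\limsup_R(\log R)^{-1/2}\sup_{|x|\le R}\log Z(t,x)\le\sqrt{2\ell\gamma(0)t}$. (This direction needs no positivity of $u$, since points with $u(t,x)\le0$ only lower the supremum.)

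For the lower bound I would first produce a matching lower moment bound: in the formula with all $z_i=0$, restrict the bridges to remain in $B(0,\rho/2)$ and discard a short initial time interval $[0,\tau]$, so that the integrand is $\ge\gamma(0)-C\rho^{\kappa}$; this gives $\EE[Z(t,x)^m]\ge\exp\big(\binom{m}{2}(t-\tau)(\gamma(0)-C\rho^{\kappa})\big)\,\big(\PP(\sup_{[\tau,t]}|\beta_s|\le\rho/2)\big)^{m}$, and letting $\rho\downarrow0$, $\tau\downarrow0$ at suitable rates as $m\to\infty$ (so the small-ball cost is $o(m^2)$) yields $\EE[Z(t,x)^m]\ge\exp\big(\tfrac12\gamma(0)t\,m^2(1-o(1))\big)$; combined with the matching upper moment bound a Paley--Zygmund argument gives $\PP(Z(t,x)>e^{\lambda})\ge\exp(-(1+o(1))\lambda^2/(2\gamma(0)t))$ as $\lambda\to\infty$. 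To pass from a single site to the supremum I would localize: choose $N\asymp R^{\ell(1-\varepsilon)}$ points $x_1,\dots,x_N\in\{|x|\le R\}$ at pairwise distance $\ge R^{\varepsilon/2}$ and, for each $k$, let $\bar u_k$ solve \eqref{eqn:SHE} with $W$ replaced by its restriction to a thin space-time tube about the segment from $0$ to $x_k$ (excised near time $0$), arranged so that the tubes are pairwise disjoint and pairwise separated by a distance tending to infinity with $R$; then, since $\gamma(y)\to0$ at infinity, the restricted noises — and hence the random variables $\bar Z_k(t,x_k):=\bar u_k(t,x_k)/p_t(x_k)$ — are asymptotically independent, and each $\bar Z_k(t,x_k)$ inherits the single-site lower tail above. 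Because $N$ times that lower tail probability tends to infinity, asymptotic independence makes $\PP(\max_k\bar Z_k(t,x_k)\le e^{(1-\varepsilon)\lambda_R})$ summably small along $R=2^n$, and a uniform estimate $\EE[(Z(t,x_k)-\bar Z_k(t,x_k))^2]\to0$ controlling the truncation error lets me replace $\bar Z_k$ by $Z$ at an $o(1)$ cost after logarithms. Then $\liminf_R(\log R)^{-1/2}\sup_{|x|\le R}\log Z(t,x)\ge(1-\varepsilon)\sqrt{2\ell\gamma(0)t}$, and $\varepsilon\downarrow0$ finishes the proof.

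The step I expect to be the main obstacle is the localization in the lower bound. With a Dirac initial datum the value $Z(t,x)$ at a distant site $x$ is driven by the noise along a \emph{thin tube} joining $x_0$ to $x$, not by the noise in a bounded ball around $x$ (along the Brownian bridge from $x_0$ to $x$ the walker spends only a vanishing fraction of its time near $x$), so the standard ``independent copies'' construction must be reorganized around such tubes; and because $W$ is only correlated — not white — in space, disjoint tubes give merely \emph{approximately} independent copies, so one must quantify the decoupling through the decay of $\gamma$ at infinity while, at the same time, keeping the truncation error $Z-\bar Z_k$ small uniformly over the $\asymp R^{\ell}$ reference points. A secondary point is the continuity step in the upper bound: one has to estimate increments of $Z$ rather than of $u$ (whose increments over unit distances are of order $e^{O(R)}$ because of the factor $p_t(x-x_0)$), and this is precisely what the difference structure of the moment formula supplies.
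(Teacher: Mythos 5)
Your upper bound is essentially the paper's argument: the Feynman--Kac moment identity for the normalized field $\K(x_0,y)=\Z(x_0;t,y)/p_t(y-x_0)$, the bound $\EE[\K^m]\le\exp(\tfrac12\gamma(0)tm(m-1))$, a Chebyshev bound optimized in $m$, a H\"older-continuity estimate in the spatial variable derived from the difference structure of the moment formula (the paper's Proposition \ref{prop:hder}), and a covering plus Borel--Cantelli argument (the paper's Theorem \ref{thm:Z up}). That half is fine.

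The lower bound, however, contains a genuine gap at exactly the step you flag as the ``main obstacle,'' and it is the step the paper explicitly states it could not carry out. Your plan requires constructing, for each of $N\asymp R^{\ell(1-\varepsilon)}$ sites $x_k$, a localized solution $\bar u_k$ driven by the noise restricted to a thin space--time tube around the segment from $x_0$ to $x_k$, together with a uniform truncation estimate $\EE[(\K(x_0,x_k)-\bar\K_k)^2]\to0$. Two problems arise. First, the tubes all emanate from the common point $x_0$, so they overlap for small times; excising $[0,\tau_R]$ forces $\tau_R\to0$ at a rate tied to the separation of the $x_k$, and one must then re-prove well-posedness and moment bounds for a mild equation whose integration domain $\{y:|y-x_0-\tfrac{s}{t}(x_k-x_0)|\le\beta_s\}$ moves and shrinks with $s$; the kernel in \eqref{tmp610} is $p_{s(t-s)/t}(y-x_0-\tfrac{s}{t}(x-x_0))$, whose mass concentration varies with $s$, and this is precisely why the authors say they ``are not able to find a fixed localized integration domain'' --- the standard argument of \cites{MR3474477,MR3098071} for bounding the truncation error does not transfer. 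Second, ``asymptotic independence'' of the $\bar\K_k$ from $\gamma(y)\to0$ is not quantitative: Riemann--Lebesgue gives no rate, and with $N$ growing polynomially in $R$ you need a summable bound on $\PP(\max_k\bar\K_k\le e^{(1-\varepsilon)\lambda_R})$, which mere vanishing of correlations at infinity does not supply without further work. The paper's actual route avoids localization entirely: conditionally on the Brownian bridges, the Feynman--Kac exponents $\xi_m(x_0,y_j)$ at the $N$ sites form a Gaussian vector; restricting the bridges to a small ball bounds the conditional variance from below by $m(m-1)t(\gamma(0)-\epsilon)$, the decay of $\gamma$ at infinity makes the pairwise conditional correlations at separation $d$ at most $\rho<\tfrac12$ of the variance, and a maximal inequality for weakly correlated Gaussian vectors (Lemma 4.2 of \cite{MR3178468}) then forces $\max_j\xi_m(x_0,y_j)\ge\lambda\sqrt{n}S_m$ with probability $1-O(e^{-(\ell+1)n})$; combining this with the lower bound on $\EE_B[e^{\lambda\sqrt{n}S_m};\text{bridges confined}]$ and Borel--Cantelli yields the sharp constant. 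If you want to salvage your outline, replacing the tube-localization by this conditional-Gaussian comparison is the missing idea.
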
	
	
For noises satisfying \ref{Cov:Dalang}, or for initial data with compact supports, the picture is less complete. 
	\begin{theorem}\label{thm:H23Z}
		Assume that $u_0$ is a non-negative measure with compact support and either \ref{Cov:bounded} or \ref{Cov:Dalang} holds. Then we have
\begin{equation}\label{lim:Zspatial}
			\limsup_{R\to\infty}(\log R)^{-\frac2{4- \bar{\alpha}}}\sup_{|x|\le R}\lt(\log u(t,x)-\log p_t*u_0(x) \rt)\leq\frac{4- \bar{\alpha}}2\ell^{\frac2{4- \bar{\alpha}}} \lt(\frac{\cee}{2- \bar{\alpha}}t \rt)^{\frac{2- \bar{\alpha}}{4- \bar{\alpha}}} \quad \mathrm{a.s.}
		\end{equation}		 		 
	\end{theorem}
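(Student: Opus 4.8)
The plan is to obtain \eqref{lim:Zspatial} from three ingredients: a sharp fixed-time moment bound for the normalized solution $u(t,x)/p_t*u_0(x)$, the stretched-exponential pointwise tail estimate it implies, and a chaining argument over concentric balls followed by Borel--Cantelli along $R=2^n$. The analytic input I would establish first is that, for each fixed $t>0$,
\begin{equation*}
	\sup_{x\in\RR^\ell}\EE\Bigl(\frac{u(t,x)}{p_t*u_0(x)}\Bigr)^m\le \exp\Bigl(o\bigl(m^{\frac{4-\bar\alpha}{2-\bar\alpha}}\bigr)+\bigl(\tfrac{\cee}{2}+o(1)\bigr)\,t\,m^{\frac{4-\bar\alpha}{2-\bar\alpha}}\Bigr)\qquad(m\to\infty).
\end{equation*}
Under \ref{Cov:bounded} this is immediate from the Feynman--Kac moment formula together with $|\gamma(z)|\le\gamma(0)$: the moment of the ratio is dominated, uniformly in $x$ and in $u_0$, by $\exp(\binom{m}{2}\gamma(0)t)\le\exp(\frac{\gamma(0)}{2}tm^2)$, and here $\bar\alpha=0$, $\cee=\gamma(0)$, so this is exactly the upper-bound input of Theorem \ref{thm:H1Z} extended to arbitrary compactly supported $u_0$. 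Under \ref{Cov:Dalang} I would instead run the Feynman--Kac moment representation in terms of $m$ independent Brownian bridges (coming from the compactly supported $u_0$), compare the resulting exponential functional with the free Brownian-motion one, and invoke the variational analysis behind \eqref{eqn:lim4}; the scaling hypothesis \ref{con:scaling} is what lets one trade the large-$t$ regime of \eqref{eqn:lim4} for the large-$m$ regime needed here, after rescaling space and absorbing the change of noise strength, so that the leading constant comes out as exactly $\cee_H(\gamma)/2$.

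With the moment bound in hand, Chebyshev's inequality gives, for $\lambda>0$,
\begin{equation*}
	\PP\bigl(\log u(t,x)-\log p_t*u_0(x)>\lambda\bigr)\le e^{-m\lambda}\,\sup_x\EE\Bigl(\tfrac{u(t,x)}{p_t*u_0(x)}\Bigr)^m,
\end{equation*}
and optimizing over $m$ (the optimal choice is of order $m\asymp(\lambda/t)^{(2-\bar\alpha)/2}$) produces, for all large $\lambda$, a bound $\PP(\cdots>\lambda)\le\exp\bigl(-(1-\e)A(t)\,\lambda^{(4-\bar\alpha)/2}\bigr)$ with $A(t)=\frac{q-1}{q}\,(q\,\tfrac{\cee}{2}\,t)^{-(2-\bar\alpha)/2}$ and $q=\frac{4-\bar\alpha}{2-\bar\alpha}$; this is precisely the constant that makes the final numbers close.

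For the passage from pointwise to $\sup_{|x|\le R}$, fix $\e>0$ and, for $R=2^n$, cover $\{|x|\le 2^n\}$ by $N_n\asymp(2^n/\rho_n)^\ell$ balls of radius $\rho_n$, where $\rho_n$ is taken to decay only polynomially in $n$, so that $\log N_n=(1+o(1))\ell n\log 2$. Applying the tail estimate at the $N_n$ centers with $\lambda_n$ chosen so that $(1-\e)A(t)\lambda_n^{(4-\bar\alpha)/2}=(1+\e)\log N_n$ yields $\lambda_n=(1+o(1))\,\frac{4-\bar\alpha}{2}\,\ell^{2/(4-\bar\alpha)}\bigl(\frac{\cee}{2-\bar\alpha}t\bigr)^{(2-\bar\alpha)/(4-\bar\alpha)}(n\log 2)^{2/(4-\bar\alpha)}$ together with a summable union bound. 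It then remains to control the oscillation of $x\mapsto\log u(t,x)-\log p_t*u_0(x)$ inside each ball of radius $\rho_n$. For this I would use an $L^p(\Omega)$ spatial modulus-of-continuity estimate (via Garsia--Rodemich--Rumsey or a Kolmogorov-type criterion), taking care that, since $u_0$ has compact support, $p_t*u_0(x)$ decays like $e^{-|x|^2/2t}$, so one must bound \emph{multiplicative} increments $u(t,x)/u(t,y)$ --- equivalently work with the logarithm --- and show the associated random H\"older constant has tails light enough to survive the union bound uniformly over $\{|x|\le 2^n\}$; choosing $\rho_n$ small enough makes this oscillation $o\bigl((n\log 2)^{2/(4-\bar\alpha)}\bigr)$, hence negligible against $\lambda_n$. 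Summing over $n$ and applying Borel--Cantelli gives \eqref{lim:Zspatial} a.s.\ along $R=2^n$, and monotonicity of $R\mapsto\sup_{|x|\le R}(\cdots)$ together with $\log 2^{n+1}/\log 2^n\to 1$ fills in the general $R$.

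The crux is the moment bound in Step~1 under \ref{Cov:Dalang}: identifying the leading constant as $\cee_H(\gamma)/2$ requires the precise large-$m$ asymptotics of the variational quantity $\cee_m$ from \eqref{eqn:lim4} and a careful Brownian-bridge-versus-Brownian-motion comparison, the bridges being forced on us because $u_0$ is only compactly supported rather than bounded below. A secondary difficulty is keeping the modulus-of-continuity estimate of Step~3 uniform over the enormous ball $\{|x|\le 2^n\}$ without introducing spurious multiplicative constants, which would corrupt the sharp factor $\ell^{2/(4-\bar\alpha)}$; this is what dictates the polynomial-in-$n$ (rather than polynomial-in-$R$) choice of $\rho_n$.
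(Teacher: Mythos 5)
Your overall architecture (Feynman--Kac moment asymptotics with the leading constant $\tfrac t2\cee$, Chebyshev, covering of $B(0,R)$, local oscillation control, Borel--Cantelli along a geometric sequence) is the same as the paper's, and your Steps 1--2 are sound: the moment bound for the ratio is exactly \eqref{ineq:upB} combined with Lemma \ref{lem:limtaum} and the scaling argument of Proposition \ref{prop:H2m}, and your optimization over $m$ reproduces the constant that the paper obtains by choosing $m=\lfloor\beta n^{1-a}\rfloor$ and optimizing over $\beta$.

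The genuine gap is in Step 3. You propose to control the oscillation of $x\mapsto\log u(t,x)-\log p_t*u_0(x)$ over small balls by bounding \emph{multiplicative} increments $u(t,x)/u(t,y)$, i.e.\ by establishing tail bounds for the random H\"older constant of $\log u$. Any upper bound on $u(t,x)/u(t,y)$ requires a lower bound on $u(t,y)$, that is, control of small values (equivalently, negative moments) of the solution. No such estimate is available in this setting --- for measure-valued initial data and colored noise this is a hard open-ended problem --- and none is needed. The correct move, and the one the paper makes, is to stay with \emph{additive} increments of a normalized field and take logarithms only \emph{after} the Chebyshev step: writing $V(x)=u(t,x)/(p_t*u_0(x))$, one has
\begin{equation*}
\PP\Bigl(\sup_{x\in B}\log V(x)>\lambda\Bigr)\le e^{-m\lambda}\,3^m\Bigl(\EE\bigl[V(x_0)^m\bigr]+\EE\bigl[\sup_{x\in B}|V(x)-V(x_0)|^m\bigr]\Bigr),
\end{equation*}
so only $L^m(\Omega)$ bounds on additive increments of $V$ are required, and the extra factor they contribute (of order $m^{m/2}\Theta_t(m)$) is harmless since $\tfrac m2\log m=o(m\lambda_n)$ for $m\asymp n^{1-a}$. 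Concretely, the paper first uses \eqref{rep:u} to get the pointwise bound \eqref{eq: u less Z}, $V(y)\le\sup_{z\in\supp u_0}\Z(z;t,y)/p_t(y-z)$, and then proves uniform-in-space additive $L^m$-increment estimates for the two-parameter field $(z,y)\mapsto\Z(z;t,y)/p_t(y-z)$ (Proposition \ref{prop:hder}); these are tractable precisely because the normalized field solves the mild equation \eqref{tmp610} whose kernels are controlled uniformly over $\RR^\ell$. If you repair Step 3 along these lines --- replacing the logarithmic/multiplicative oscillation bound by additive increment bounds for the normalized field, ideally after the reduction \eqref{eq: u less Z} --- the argument closes and yields the stated constant.
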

	For initial data satisfying \eqref{eq:u0 lower upper bd}, the lower bound of \eqref{coj:h2} is proved in \cite{MR3474477} using a localization argument initiated from \cite{MR3098071}. In our situation, a technical difficulty arises in applying this localization procedure, 
	which leads to the missing lower bound in Theorem \ref{thm:H23Z}. A detailed explanation is given at the beginning of Subsection \ref{sub:low}. As an attempt to obtain the exact spatial asymptotics, we propose an alternative result which is described below. We need to introduce a few more notation.
		For each $\epsilon>0$, we denote 
	\begin{equation}
		\gamma_ \epsilon(x)=(2 \pi)^{-\ell}\int_{\RR^\ell}e^{- 2\epsilon|\xi|^2}e^{i \xi\cdot x}\mu(\xi)d \xi\,,
	\end{equation}
	which is a bounded non-negative definite function. Let $W_ \epsilon$ be a centered Gaussian field defined by
	\begin{equation}\label{eqn:Wep}
		W_ \epsilon(\phi)=W(p_{\epsilon}*\phi)
	\end{equation}
	for all $\phi\in C_{c}^{\infty}(\RR_+\times\RR^\ell)$. In the above, $p_ \epsilon=(2 \pi \epsilon)^{-\ell/2}e^{-|x|^2/(2 \epsilon)}$ and $p_{\epsilon}*\phi$ is the convolution of $p_{\epsilon}$ with $\phi$ in the spatial variables. The covariance structure of $W_ \epsilon$ is given by
	\begin{align}
		\EE [W_ \epsilon(\phi)W_ \epsilon(\psi)]&=\frac1{(2 \pi)^\ell} \int_0^\infty\int_{\RR^\ell} \cff\phi(s,\xi)\overline{\cff\psi(s,\xi)}e^{-2 \epsilon|\xi|^2}\mu( \xi) d \xi ds
		\nonumber\\&=\int_0^\infty\iint_{\RR^{2\ell}} \phi(s,x)\psi(s,y)\gamma_ \epsilon(x-y)dxdyds\label{Wep.cov}
	\end{align}	
	for all  $\phi,\psi\in C_c^{\infty}(\RR_+\times\RR^\ell)$.
	In other words, $W_ \epsilon$ is white in time and correlated in space with spatial covariance function $\gamma_ \epsilon$, which satisfies \ref{Cov:bounded}. Under condition \ref{con:scaling}, $\gamma_ \epsilon$ satisfies the scaling relation
	\begin{equation}\label{eqn:scalgamma}
			\gamma_ \epsilon(x)=\epsilon^{-\frac \alpha2}\gamma_1(\epsilon^{-\frac12} x)\quad \mbox{for all}\quad \epsilon>0,x\in\RR^\ell\,.
	\end{equation}
	Let $u_ \epsilon$ be the solution to equation \eqref{eqn:SHE} with $\dot{W}$ replaced by $\dot{W}_ \epsilon$. 
 It is expected that as $\epsilon\downarrow0$, $u_ \epsilon(t,x)$ converges to $u(t,x)$ in $L^2(\Omega)$ for each $(t,x)$, see \cite{ChenHuang} for a proof when the initial data is a bounded function. The following result describes spatial asymptotic of the family of random fields $\{u_\epsilon\}_{\epsilon\in(0,1)}$.
	\begin{theorem}\label{thm:uep}
	Assume that $u_0$ is a non-negative measure with compact support and either \ref{Cov:bounded} or \ref{Cov:Dalang} holds. Then
		\begin{equation}\label{lim:H2epup}
			\limsup_{R\to\infty}(\log R)^{-\frac2{4- \bar{\alpha}}}\sup_{|x|\le R,\epsilon\in(0,1)}\lt(\log u_ \epsilon(t,x)-\log p_t*u_0(x) \rt)
			\leq\frac{4- \bar{\alpha}}2\ell^{\frac2{4- \bar{\alpha}}} \lt(\frac{\cee}{2- \bar{\alpha}}t \rt)^{\frac{2- \bar{\alpha}}{4- \bar{\alpha}}}\quad \mathrm{a.s.}
		\end{equation} 
		If, in particular, $u_0=\delta(\cdot-x_0)$ for some $x_0 \in \RR^{\ell}$, then
		\begin{equation}\label{lim:H2ep}
			\lim_{R\to\infty}(\log R)^{-\frac2{4- \bar{\alpha}}}\sup_{|x|\le R,\epsilon\in(0,1)}\lt(\log u_ \epsilon(t,x) + \frac{(x-x_0)^2}{2t} \rt)
			=\frac{4- \bar{\alpha}}2\ell^{\frac2{4- \bar{\alpha}}} \lt(\frac{\cee}{2- \bar{\alpha}}t \rt)^{\frac{2- \bar{\alpha}}{4- \bar{\alpha}}}\quad \mathrm{a.s.}
		\end{equation} 		
	\end{theorem}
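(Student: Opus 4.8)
The plan is to deduce \eqref{lim:H2ep} from \eqref{lim:H2epup} together with a matching lower bound. When $u_0=\delta(\cdot-x_0)$ one has $\log p_t*u_0(x)=-\frac{(x-x_0)^2}{2t}-\frac\ell2\log(2\pi t)$, so the suprema in \eqref{lim:H2epup} and \eqref{lim:H2ep} differ only by the constant $\frac\ell2\log(2\pi t)$, which disappears after division by $(\log R)^{2/(4-\bar\alpha)}$; hence \eqref{lim:H2ep} follows once \eqref{lim:H2epup} is proved and one also establishes
\begin{equation*}
	\liminf_{R\to\infty}(\log R)^{-\frac2{4-\bar\alpha}}\sup_{|x|\le R,\ \epsilon\in(0,1)}\bigl(\log u_\epsilon(t,x)-\log p_t*u_0(x)\bigr)\ \ge\ \frac{4-\bar\alpha}2\ell^{\frac2{4-\bar\alpha}}\Bigl(\frac{\cee}{2-\bar\alpha}t\Bigr)^{\frac{2-\bar\alpha}{4-\bar\alpha}}=:\kappa_* .
\end{equation*}
Throughout put $\theta=\frac2{4-\bar\alpha}$ and $q=\frac{4-\bar\alpha}{2-\bar\alpha}$, so that $\frac1{q-1}=\frac{2-\bar\alpha}2$ and $\frac{\theta q}{q-1}=1$.

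The core of the upper bound \eqref{lim:H2epup} is the moment estimate, uniform in $\epsilon$,
\begin{equation}\label{eq:plan-moment}
	\limsup_{m\to\infty}\ m^{-q}\log\ \sup_{x\in\RR^\ell,\ \epsilon\in(0,1)}\EE\Bigl[\bigl(\tfrac{u_\epsilon(t,x)}{p_t*u_0(x)}\bigr)^m\Bigr]\ \le\ \tfrac{\cee}{2}\,t .
\end{equation}
I would start from the Feynman--Kac moment representation, which for a measure datum reads $\EE[u_\epsilon(t,x)^m]=\int\prod_j u_0(dy_j)\,p_t(x-y_j)\,\EE_{\otimes_j\mathrm{Br}^{x\to y_j}}\bigl[\exp(\sum_{j<k}\int_0^t\gamma_\epsilon(\beta^j_s-\beta^k_s)\,ds)\bigr]$, where $\mathrm{Br}^{x\to y}$ denotes the Brownian-bridge law on $[0,t]$. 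Because $\gamma_\epsilon$ attains its maximum at the origin, the inner expectation is largest when all $y_j$ coincide, so $\EE[(u_\epsilon(t,x)/p_t*u_0(x))^m]\le\EE\bigl[\exp(\sum_{j<k}\int_0^t\gamma_\epsilon(\widetilde\beta^j_s-\widetilde\beta^k_s)\,ds)\bigr]$ for i.i.d.\ standard Brownian bridges $\widetilde\beta^j$. Under \ref{Cov:bounded} ($\bar\alpha=0$) the right side is $\le e^{\binom m2\gamma(0)t}=\exp((1+o(1))\tfrac{\gamma(0)}2m^2t)$, which is \eqref{eq:plan-moment} with $\cee=\gamma(0)$. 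Under \ref{Cov:Dalang} one has to make the bound uniform in $\epsilon\in(0,1)$: writing $\gamma_\epsilon=\gamma*p_{2\epsilon}$ and using Jensen's inequality dominates the bridge functional of $\gamma_\epsilon$ by an average of shifted bridge functionals of $\gamma$, and a convexity/rearrangement argument (or a direct comparison with the moments of $u$ itself) shows that this does not exceed $\EE\bigl[\exp(\sum_{j<k}\int_0^t\gamma(\widetilde\beta^j_s-\widetilde\beta^k_s)\,ds)\bigr]$ up to a factor subexponential in $m$; the large-$m$ growth of the latter is $\exp((1+o(1))\tfrac{\cee_H(\gamma)}2m^qt)$, which is the moment asymptotic \eqref{eqn:lim4} of \cite{HLN15} carried from large $t$ to large $m$ by means of the scaling relation \eqref{eqn:scalgamma}. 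Granting \eqref{eq:plan-moment}, \eqref{lim:H2epup} follows by a standard chaining argument along $R_n=2^n$: a Kolmogorov-type modulus-of-continuity estimate, uniform in $\epsilon$, reduces $\sup_{|x|\le R,\,\epsilon\in(0,1)}$ to a maximum over a mesh in $(x,\epsilon)$ of polynomial cardinality $R^{\ell+o(1)}$; Markov's inequality at level $e^{m\kappa(\log R)^\theta}$ together with \eqref{eq:plan-moment} bounds $\PP\bigl(\sup_{|x|\le R,\,\epsilon}(\log u_\epsilon-\log p_t*u_0)>\kappa(\log R)^\theta\bigr)$ by $R^{\ell+o(1)}\exp\bigl(-m\kappa(\log R)^\theta+(1+o(1))\tfrac\cee2 m^q t\bigr)$; and optimizing in $m$ (the optimal $m$ is of order $(\log R)^{(2-\bar\alpha)/(4-\bar\alpha)}$) yields a summable bound precisely when $\kappa>\kappa_*$, since $\kappa_*=\bigl[\tfrac{\ell q}{q-1}\bigr]^{(q-1)/q}\bigl(\tfrac\cee2 qt\bigr)^{1/q}$ is exactly the threshold at which the resulting power of $\log R$ changes sign; one checks this equals the right-hand side of \eqref{lim:H2epup}.

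For the lower bound in the Dirac case under \ref{Cov:bounded} I would invoke Theorem \ref{thm:H1Z} for each fixed $\epsilon_0\in(0,1)$, noting that $\gamma_{\epsilon_0}$ is a smooth covariance still satisfying \ref{Cov:bounded}: this gives $\lim_R(\log R)^{-1/2}\sup_{|x|\le R}(\log u_{\epsilon_0}(t,x)-\log p_t*u_0(x))=\sqrt{2\ell\gamma_{\epsilon_0}(0)t}$, so the liminf of the $\epsilon$-supremum is at least $\sup_{\epsilon_0\in(0,1)}\sqrt{2\ell\gamma_{\epsilon_0}(0)t}=\sqrt{2\ell\gamma(0)t}=\kappa_*$, using $\gamma_{\epsilon_0}(0)\uparrow\gamma(0)$ as $\epsilon_0\downarrow0$. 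Under \ref{Cov:Dalang} this shortcut is useless, the $(\log R)^{1/2}$-rate being negligible against $(\log R)^{2/(4-\alpha)}$, and a genuine localization is required, now carried out on $u_\epsilon$ with $\epsilon=\epsilon_R\to0$ chosen jointly with the spatial scale. The scheme is to partition $\{|x|\le R\}$ into $N_R\sim(R/L_R)^\ell$ boxes $Q_i$ of side $L_R$, to localize $u_{\epsilon_R}$ on $Q_i$ by retaining only the part of $W_{\epsilon_R}$ supported in a neighborhood $\mathcal D_i\supset Q_i$ with the $\mathcal D_i$ disjoint and avoiding $x_0$ — in the Feynman--Kac picture this truncates the noise contribution once the bridge from $Q_i$ to $x_0$ has exited $\mathcal D_i$, which occurs after a time of order $L_R t/R$ — so that the localized fields $u^{(i)}_{\epsilon_R}$ are independent, have mean $p_t(\cdot-x_0)$, and have higher moments kept finite by the boundedness of $\gamma_{\epsilon_R}$. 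A Paley--Zygmund (second-moment) estimate then gives $\PP\bigl(\sup_{Q_i}u^{(i)}_{\epsilon_R}>e^{\kappa(\log R)^\theta}p_t(\cdot-x_0)\bigr)\ge p_R$ with $p_RN_R\to\infty$ whenever $\kappa<\kappa_*$, provided $L_R$, $\epsilon_R$ and the moment order are tuned correctly, and Borel--Cantelli finishes.

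I expect this last step under \ref{Cov:Dalang} to be the main obstacle: one must make the localization quantitative and sufficiently uniform in $\epsilon$ that the sharp constant $\kappa_*$ — in which the variational quantity $\cee_H(\gamma)$, not merely $\gamma_\epsilon(0)$, appears — is recovered as $\epsilon_R\to0$. This calls for a delicate simultaneous tuning of the box side $L_R$, the mollification parameter $\epsilon_R$, and the moment order, so that at once the residual correlation of the localized fields through the common vicinity of $x_0$ is negligible, restricting $\gamma_{\epsilon_R}$ to $\mathcal D_i$ is nearly costless, and the lower bound $p_R$ still dominates $N_R^{-1}$. This is exactly the point at which the corresponding localization for the un-mollified solution $u$ fails, which is the reason one passes to $u_\epsilon$.
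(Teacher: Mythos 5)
Your upper bound is essentially the paper's argument: the $\epsilon$-uniform moment bound (which follows from $e^{-2\epsilon|\xi|^2}\mu\le\mu$, so that the bridge functional of $\gamma_\epsilon$ is dominated by that of $\gamma$, Proposition \ref{prop:upB} and \cite[Lemma 4.1]{HLN15}), combined with a modulus-of-continuity estimate that is uniform in $\epsilon$ as well as in $x$ (Proposition \ref{prop:hlder epsilon}), Chebyshev at moment order $m\sim(\log R)^{(2-\bar\alpha)/(4-\bar\alpha)}$, and Borel--Cantelli; your optimization reproduces the constant $\lambda_0$. Your lower bound under \ref{Cov:bounded} via Theorem \ref{thm:H1Z} applied to the noise $W_{\epsilon_0}$ for each fixed $\epsilon_0$, followed by $\gamma_{\epsilon_0}(0)\uparrow\gamma(0)$, is also sound (though in the paper that theorem is itself proved by the very argument you would be bypassing).

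The genuine gap is the lower bound under \ref{Cov:Dalang}. The localization scheme you sketch --- disjoint neighborhoods $\mathcal D_i$ of boxes $Q_i$, independent localized fields with mean $p_t(\cdot-x_0)$, Paley--Zygmund --- is precisely the approach the paper explains it could \emph{not} make work (see the discussion around \eqref{tmp610}): in the mild equation for $\Z_\epsilon(x_0;t,\cdot)/p_t(\cdot-x_0)$ the kernel is $p_{s(t-s)/t}(y-x_0-\tfrac st(x-x_0))$, whose mass concentrates along the whole segment from $x_0$ to $x$ as $s$ runs over $[0,t]$; with $|x|\sim R$ the field restricted to a neighborhood of $Q_i$ captures only an $O(L_R/R)$ fraction of the relevant time interval, and all the fields interact through the common vicinity of $x_0$, so neither independence nor retention of the sharp constant $\cee_H(\gamma)$ is available. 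You flag this yourself as ``the main obstacle'' and leave it as a plan, so the proof is not complete. The paper's actual route (Theorem \ref{thm:low}) avoids localization entirely: it conditions on the $m$ Brownian bridges in the Feynman--Kac formula \eqref{eqn:FKZ}, so that $\{\xi_m(x_0,y_j)\}_{j\le N}$ becomes a centered Gaussian vector; Lemma \ref{lem:H1} shows its cross-correlations are at most $\rho<\tfrac12$ times the common variance for well-separated $y_j$, whence a Gaussian maximal inequality (\cite[Lemma 4.2]{MR3178468}) forces $\max_j\xi_m(x_0,y_j)\ge\lambda\sqrt n\,S_m$ for any $\lambda<\sqrt{2\ell}$ with overwhelming probability; the quantity $\EE_B[e^{\lambda\sqrt n S_m};\min_j\tau^j\ge t]$ is then evaluated from below by the large-deviation Lemma \ref{lem:expBB} and the variation $\cmm(\gamma)$ of Proposition \ref{prop:ME}. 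The essential device, absent from your plan, is the scaling $\epsilon_n=\epsilon(t/n)^{2/(4-\alpha)}$ tying the mollification to the driving parameter, which is what allows the Brownian scaling to convert the problem into a long-time asymptotic for the fixed covariance $\gamma_\epsilon$ and to recover $\cee_H(\gamma)$ in the limit $\epsilon\downarrow0$.
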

	Neither one of \eqref{coj:h2} and \eqref{lim:H2ep} is stronger than the other. While the result of Theorem \ref{thm:uep} relates to the solution of \eqref{eqn:SHE} indirectly, it is certainly interesting. In Hairer's theory of regularity structures (cf. \cite{MR3274562}), one first regularizes the noise to obtain a sequence of approximated solutions. The solution of the corresponding stochastic partial differential equation is then constructed as the limiting object of this sequence. From this point of view, \eqref{lim:H2ep} provides a unified characteristic of the sequence of approximating solutions $\{u_ \epsilon\}_{\epsilon\in(0,1)}$, which approaches the solution $u$ as $\epsilon\downarrow 0$. 
	The proof of \eqref{lim:H2ep} does not rely on localization, rather, on the Gaussian nature of the noise. This leads to a possibility of extending \eqref{lim:H2ep} to temporal colored noises, which will be  a topic for future research.

	The remainder of the article is structured as follows: In Section \ref{sec:preliminary} we briefly summarize the theory of stochastic integrations and well-posedness results for \eqref{eqn:SHE}.   In Section \ref{sec:variations} we introduce some variational quantities which are related to the spatial asymptotics. In Section  \ref{sub:feynman_kac_formula} we derive some Feynman-Kac formulas of the solution and its moments, these formulas play a crucial role in our consideration. In Section \ref{sec:moment_asymptotic_and_regularity} we investigate the high moment asymptotics and H\"older regularity of the solutions of \eqref{eqn:SHE} with respect to various parameters. The results in  Section \ref{sec:moment_asymptotic_and_regularity} are used to obtain upper bounds in \eqref{coj:h1} and \eqref{coj:h2}. This is presented in Section \ref{sec:upper}, where we also give a proof of the lower bounds in Theorems \ref{thm:H1Z}, \ref{thm:H23Z} and \ref{thm:uep}.

\section{Preliminaries} 
\label{sec:preliminary}
	We introduce some notation and concepts which are used throughout the article.
	The space of Schwartz functions is denoted by $\css(\RR^\ell)$. The Fourier transform of a function $g\in\css(\RR^\ell)$ is defined with the normalization
	\begin{equation*}
		\cff g(\xi)=\int_{\RR^\ell}e^{-i \xi\cdot x}g(x)dx\,,
	\end{equation*} 
	so that the inverse Fourier transform is given by $\cff^{-1}g(\xi)=(2 \pi)^{-\ell}\cff g(- \xi)$. The Plancherel identity with this normalization reads
	\begin{equation*}
		\int_{\RR^\ell}|f(x)|^2dx=\frac1{(2 \pi)^\ell}\int_{\RR^\ell}|\cff f(\xi)|^2 d \xi\,.
	\end{equation*}

	Let us now describe stochastic integrations with respect to $W$. We can interpret  $W$   as a Brownian motion with values in an infinite dimensional Hilbert space. In this context,  the stochastic integration theory with respect to $W$ can be handled by classical theories (see for example, \cite{DQ}).
		We briefly recall the main   features of this theory.

		We denote by  $\HH_0$ the Hilbert space defined as the closure of  $\mathcal{S}(\RR^\ell)$ under the inner product
		  \begin{equation}\label{HH0}
		  \langle g, h \rangle_{ \HH_0}=\frac 1{(2\pi)^\ell}  \int_{\RR^\ell}\mathcal{F}g(\xi)\overline{\mathcal{F}h(\xi)} \mu(\xi) d \xi\,.
		  \end{equation}
		which can also be written as 
		\begin{equation}\label{HH0.Cartest}
		  \langle g, h \rangle_{ \HH_0}=\iint_{\RR^\ell\times\RR^\ell}g(x)h(y)\gamma(x-y)dxdy \,.
		\end{equation}
		If $\gamma$ satisfies \ref{Cov:bounded}, then $\HH_0$ contains distributions such as Dirac delta masses. 
		The Gaussian family $W$ can be extended to an  {\it isonormal Gaussian} process $\{W(\phi), \phi \in L^2(\RR_+, \HH_0)\}$ parametrized by the Hilbert space $\HH:=L^2(\RR_+, \HH_0)$.
		For any $t\ge0$, let $\mathcal{F}_{t}$ be the $\sigma$-algebra generated by $W$ up to time $t$. 
		Let $\Lambda$ be the space of  $\HH_0$-valued predictable processes $g$  such that
		 $\EE\|g\|_{\HH}^{2}<\infty$. Then, one can construct (cf. \cite{HLN15}) the stochastic integral 
 $\int_0^\infty\int_{\mathbb{R}^\ell}g(s,x) \, W(ds,dx)$ 
  		such that
		\begin{equation}\label{int isometry}
		\EE  \lt( \int_0^\infty\int_{\mathbb{R}^\ell}g(s,x) \, W(ds,dx) \rt)^{2} 
		=
		\EE  \|g\|_{\HH}^{2}.
		\end{equation}
		Stochastic integration over finite time interval can be defined easily
		\begin{equation*}
		 	 \int_0^t\int_{\mathbb{R}^\ell}g(s,x) \, W(ds,dx)= \int_0^\infty\int_{\mathbb{R}^\ell}1_{[0,t]}(s) g(s,x) \, W(ds,dx)\,.
		\end{equation*} 
		Finally, the Burkholder's inequality in this context reads
		\begin{equation} \label{ineq:Burkholder}
			\lt\|\int_0^t\int_{\RR^\ell}g(s,x)W(ds,dx)\rt\|_{L^p(\Omega)}\le \sqrt{4 p}\lt\|\int_0^t\|g(s,\cdot)\|^2_{\HH_0}ds \rt\|^{\frac12}_{L^{\frac p2}(\Omega)}\,,
		\end{equation}
		which holds for all $p\ge2$ and $g\in \Lambda$. A useful application of \eqref{ineq:Burkholder} is the following result
		\begin{lemma}\label{lem:Wm}
		 Let $m\ge2$  be an integer, $f$ be a deterministic function on $[0,\infty)\times\RR^\ell$ and $u=\{u(s,x): s\ge0,x\in\RR^\ell\}$ be a predictable random field such that
		\begin{equation*}
			\U_m(s):=\sup_{x\in\RR^\ell}\|u(s,x)\|_{L^m(\Omega)}<\infty\,.
		\end{equation*}
		Under hypothesis \ref{Cov:Dalang}, we have
		\begin{equation*}
			\lt\|\int_0^t\int_{\RR^\ell}f(s,y)u(s,y)W(ds,dy) \rt\|_{L^m(\Omega)}
			\le \sqrt{4m}  \||f(s,y)| \mathbf{1}_{[0,t]}(s)\U_m(s)\|_{\HH_{s,y}}\,;
		\end{equation*}
		and under hypothesis \ref{Cov:bounded}, we have
		\begin{align*}
			\left\| \int_0^t \int_{\RR^{\ell}} f(s,y)u(s,y) W(ds,dy)\right\|_{L^m(\Omega)} \leq \sqrt{4m \gamma	(0)} 
			 \left( \int_0^t \left(\int_{\RR^{\ell}} f(s,y) dy \U_m(s)\right)^2 ds \right)^{\frac12}
		\end{align*}
	\end{lemma}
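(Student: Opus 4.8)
\emph{Proof plan.} The plan is to deduce both estimates from Burkholder's inequality \eqref{ineq:Burkholder}. Since $f$ is deterministic and $u$ is predictable, the process $g(s,y)=f(s,y)u(s,y)$ is predictable; assuming (as one may, both inequalities being trivial otherwise) that the right-hand sides are finite, the computation below also shows $g\mathbf{1}_{[0,t]}\in\Lambda$, so \eqref{ineq:Burkholder} applies with $p=m$ (legitimate since $m\ge 2$) and gives
\[
  \Big\|\int_0^t\!\int_{\RR^\ell}f(s,y)u(s,y)\,W(ds,dy)\Big\|_{L^m(\Omega)}\le\sqrt{4m}\;\Big\|\int_0^t\|f(s,\cdot)u(s,\cdot)\|_{\HH_0}^2\,ds\Big\|_{L^{m/2}(\Omega)}^{1/2}.
\]
Since $m/2\ge 1$, Minkowski's integral inequality replaces the right-hand side by $\sqrt{4m}\big(\int_0^t\big\|\|f(s,\cdot)u(s,\cdot)\|_{\HH_0}^2\big\|_{L^{m/2}(\Omega)}\,ds\big)^{1/2}$, so everything reduces to estimating, for fixed $s$, the $L^{m/2}(\Omega)$-norm of the nonnegative random variable $\|f(s,\cdot)u(s,\cdot)\|_{\HH_0}^2$.

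Under \ref{Cov:Dalang} I would use the Cartesian expression \eqref{HH0.Cartest},
\[
  \|f(s,\cdot)u(s,\cdot)\|_{\HH_0}^2=\iint_{\RR^{2\ell}}f(s,x)u(s,x)\,f(s,y)u(s,y)\,\gamma(x-y)\,dx\,dy ,
\]
valid with $\gamma$ either $\delta_0$ or a nonnegative locally integrable function. In the latter case $\gamma(x-y)\,dx\,dy$ is a positive ($\sigma$-finite) measure on $\RR^{2\ell}$, so Minkowski's integral inequality applies once more, and then the Cauchy--Schwarz inequality in $L^m(\Omega)$ gives $\|u(s,x)u(s,y)\|_{L^{m/2}(\Omega)}\le\|u(s,x)\|_{L^m(\Omega)}\|u(s,y)\|_{L^m(\Omega)}\le\U_m(s)^2$; hence
\[
  \big\|\|f(s,\cdot)u(s,\cdot)\|_{\HH_0}^2\big\|_{L^{m/2}(\Omega)}\le\U_m(s)^2\iint_{\RR^{2\ell}}|f(s,x)||f(s,y)|\,\gamma(x-y)\,dx\,dy=\big\||f(s,\cdot)|\,\U_m(s)\big\|_{\HH_0}^2 .
\]
The case $\gamma=\delta_0$ is the same computation with the double integral collapsed to $\int_{\RR^\ell}f(s,y)^2u(s,y)^2\,dy$. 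Integrating in $s$ over $[0,t]$ and taking the square root yields $\sqrt{4m}\,\big\||f(s,y)|\mathbf{1}_{[0,t]}(s)\U_m(s)\big\|_{\HH_{s,y}}$, the first assertion.

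Under \ref{Cov:bounded} I would instead use the Fourier expression \eqref{HH0} together with the elementary bounds $|\cff(f(s,\cdot)u(s,\cdot))(\xi)|\le\int_{\RR^\ell}|f(s,y)||u(s,y)|\,dy$ and $(2\pi)^{-\ell}\int_{\RR^\ell}\mu(\xi)\,d\xi=\gamma(0)$ to obtain the pathwise estimate
\[
  \|f(s,\cdot)u(s,\cdot)\|_{\HH_0}^2\le\gamma(0)\Big(\int_{\RR^\ell}|f(s,y)||u(s,y)|\,dy\Big)^2 .
\]
Taking $L^{m/2}(\Omega)$-norms, applying Minkowski's integral inequality in the form $\big\|\int_{\RR^\ell}|f(s,y)||u(s,y)|\,dy\big\|_{L^m(\Omega)}\le\int_{\RR^\ell}|f(s,y)|\,\U_m(s)\,dy$, integrating in $s$ and extracting the square root gives the second assertion (with $|f|$ in place of $f$, which coincides with the stated bound in the applications where $f\ge 0$).

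I expect the only delicate point to be the treatment of $\|f(s,\cdot)u(s,\cdot)\|_{\HH_0}^2$ as a random variable under \ref{Cov:Dalang}: one must observe that $\gamma(x-y)\,dx\,dy$ defines a genuine positive measure so that Minkowski's integral inequality is applicable, and dispose of the distributional case $\gamma=\delta_0$ separately. Everything else --- Burkholder, Minkowski, Cauchy--Schwarz --- is routine.
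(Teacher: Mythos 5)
Your proof follows the same route as the paper's: Burkholder's inequality \eqref{ineq:Burkholder}, then Minkowski's integral inequality, then the Cartesian expression \eqref{HH0.Cartest} of the $\HH_0$-norm together with Cauchy--Schwarz to bound $\|u(s,x)u(s,y)\|_{L^{m/2}(\Omega)}$ by $\U_m(s)^2$ (the paper only writes out the \ref{Cov:Dalang} case and declares the bounded case similar). Your version is in fact slightly more careful on the Cauchy--Schwarz step, where the paper's displayed bound $\|u(s,x)u(s,y)\|_{L^{m/2}}\le\U_m(s)$ should read $\U_m(s)^2$, exactly as you have it.
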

	\begin{proof}
		We consider only the hypothesis \ref{Cov:Dalang}, the other case is obtained similarly. In view of Burkholder inequality \eqref{ineq:Burkholder} and Minkowski inequality, it suffices to show
		\begin{equation}\label{tmp:fz}
			\int_0^t\| \|f(s,\cdot)u(s,\cdot)\|^2_{\HH_0}\|_{L^{\frac m2}(\Omega)} ds\le \||f(s,y)| \mathbf{1}_{[0,t]}(s)\U_m(s)\|_{\HH_{s,y}}^2\,.
		\end{equation}
		In fact, using \eqref{HH0.Cartest} and Minkowski inequality, the left-hand side in the above is at most
		\begin{equation*}
			\int_0^t\iint_{\RR^\ell\times \RR^\ell}|f(s,x)f(s,y)| \|u(s,x)u(s,y)\|_{L^{\frac m2}(\Omega)}\gamma(x-y)dxdyds\,.
		\end{equation*}
		Note in addition that by Cauchy-Schwarz inequality,
		\[ \|u(s,x)u(s,y)\|_{L^{\frac m2}(\Omega)}\le \|u(s,x)\|_{L^m(\Omega)}^{1/2}\|u(s,y)\|_{L^m(\Omega)}^{1/2}\le\U_m(s). \]
		From here, \eqref{tmp:fz} is transparent and the proof is complete.
	\end{proof}

		We now state the definition of the solution to equation  (\ref{eqn:SHE}) using the stochastic integral introduced previously.
		\begin{definition}\label{def-sol-sigma}
			Let $u=\{u(t,x),  t\ge 0, x \in \mathbb{R}^\ell\}$ be a real-valued predictable stochastic process  such that for all $t \ge 0$ and $x\in\RR^\ell$ the process $\{p_{t-s}(x-y)u(s,y) \mathbf{1}_{[0,t]}(s), 0 \leq s \leq t, y \in \mathbb{R}^\ell\}$ is an element of $\Lambda$.
			We say that $u$ is a mild solution of  (\ref{eqn:SHE}) if for all $t \in [0,T]$ and $x\in \mathbb{R}^\ell$ we have
			\begin{equation}\label{eq:mild-formulation sigma}
				u(t,x)=p_t*u_0 (x) + \int_0^t \int_{\mathbb{R}^\ell}p_{t-s}(x-y)u(s,y) W(ds,dy) \quad a.s.
			\end{equation}
		\end{definition}
		The following existence and uniqueness result has been proved in \cite{HLN15}{ under hypothesis \ref{Cov:Dalang}. Under hypothesis \ref{Cov:bounded}, one can proceed as in \cite{Huang}, using a simple Picard iteration argument to obtain the existence and uniqueness of the solution.}
		\begin{theorem}  \label{thm1}
		Suppose that $u_0$ satisfies \eqref{con:u0} and the spectral  measure $\mu$ satisfies  hypotheses \ref{Cov:bounded} or \ref{Cov:Dalang}.    Then there exists a unique solution to equation (\ref{eqn:SHE}).
		\end{theorem}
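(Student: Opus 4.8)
I would establish existence by a Picard iteration and uniqueness by the same estimate, as in \cite{Huang} and \cite{HLN15}; the single genuinely new point compared with a bounded initial datum is the control of the typically unbounded spatial profile $x\mapsto p_t*|u_0|(x)$. Set $u^{(0)}(t,x):=p_t*u_0(x)$ and, recursively,
\begin{equation*}
	u^{(n+1)}(t,x):=p_t*u_0(x)+\int_0^t\int_{\RR^\ell}p_{t-s}(x-y)\,u^{(n)}(s,y)\,W(ds,dy)\,.
\end{equation*}
The plan is: first, to show inductively that each integrand $p_{t-s}(x-y)u^{(n)}(s,y)\1_{[0,t]}(s)$ belongs to $\Lambda$, together with a moment bound $\|u^{(n)}(s,y)\|_{L^m(\Omega)}\le C_n(T,m)\,p_s*|u_0|(y)$ for $s\le T$ and every $m\ge2$; next, to prove that $(u^{(n)}(t,x))_n$ is Cauchy in $L^m(\Omega)$ for each $(t,x)$; then, to identify its limit $u$ as a mild solution in the sense of Definition \ref{def-sol-sigma}, with $\|u(t,x)\|_{L^m(\Omega)}\le C(T,m)\,p_t*|u_0|(x)$ on $[0,T]$; and finally, to deduce uniqueness from the same estimate.

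The core estimate would rest on two deterministic identities. The first is Chapman--Kolmogorov, $\int_{\RR^\ell}p_{t-s}(x-y)(p_s*|u_0|)(y)\,dy=p_t*|u_0|(x)$. The second is the Gaussian product formula
\begin{equation*}
	p_{t-s}(x-y)\,p_s(y-z)=p_t(x-z)\,p_{\frac{s(t-s)}{t}}\!\left(y-\frac{sx+(t-s)z}{t}\right),
\end{equation*}
which yields $p_{t-s}(x-\cdot)(p_s*|u_0|)(\cdot)=\int_{\RR^\ell}p_t(x-z)\,p_{\frac{s(t-s)}{t}}(\cdot-y_*(z))\,|u_0|(dz)$ with $y_*(z):=\frac{sx+(t-s)z}{t}$. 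Since $\mu\ge0$, the form $\langle\cdot,\cdot\rangle_{\HH_0}$ is a nonnegative-definite inner product, so Cauchy--Schwarz and its triangle inequality, applied to the last display, give
\begin{equation*}
	\big\|p_{t-s}(x-\cdot)(p_s*|u_0|)(\cdot)\big\|_{\HH_0}^2\le\rho(s,t)\,\big(p_t*|u_0|(x)\big)^2,\qquad\rho(s,t):=\big\|p_{\frac{s(t-s)}{t}}\big\|_{\HH_0}^2\,;
\end{equation*}
under \ref{Cov:bounded} one works instead in Cartesian coordinates and uses $|\gamma|\le\gamma(0)$ to take $\rho(s,t)=\gamma(0)$. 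In both cases $\Theta(t):=\int_0^t\rho(s,t)\,ds<\infty$: trivially $\Theta(t)=\gamma(0)t$ under \ref{Cov:bounded}, while under \ref{Cov:Dalang} one has $\|p_\tau\|_{\HH_0}^2=(2\pi)^{-\ell}\int_{\RR^\ell}e^{-\tau|\xi|^2}\mu(\xi)\,d\xi=c_\mu\tau^{-\alpha/2}$ by the scaling \ref{con:scaling}, with $c_\mu<\infty$ since $e^{-|\xi|^2}\lesssim(1+|\xi|^2)^{-1}$ and so $c_\mu\lesssim\int_{\RR^\ell}\mu(\xi)(1+|\xi|^2)^{-1}\,d\xi<\infty$ by \eqref{k5}, and with $0<\alpha/2<1$, giving $\Theta(t)=c_\mu\,t^{1-\alpha/2}B(1-\tfrac\alpha2,1-\tfrac\alpha2)<\infty$.

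Granting $\|u^{(n)}(s,y)\|_{L^m(\Omega)}\le C_n(s)\,p_s*|u_0|(y)$, one combines Burkholder's inequality \eqref{ineq:Burkholder}, Minkowski's inequality and Cauchy--Schwarz in $L^m(\Omega)$ with the bound on $\rho$ to conclude that the next integrand lies in $\Lambda$ and that $D_n(t,x):=\|u^{(n+1)}(t,x)-u^{(n)}(t,x)\|_{L^m(\Omega)}$ satisfies $D_n(t,x)\le b_n(t)\,p_t*|u_0|(x)$, where $b_0(t)^2=4m\Theta(t)$ and $b_n(t)^2=4m\int_0^t b_{n-1}(s)^2\rho(s,t)\,ds$. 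Under \ref{Cov:bounded} this is the classical recursion $b_n(t)^2=4m\gamma(0)\int_0^t b_{n-1}(s)^2\,ds$, so $b_n(t)^2=(4m\gamma(0)t)^{n+1}/(n+1)!$ and $\sum_n b_n(t)<\infty$. Under \ref{Cov:Dalang} it is of fractional-integral type: iterating the integral with the explicit kernel $\rho(s,t)=c_\mu(s(t-s)/t)^{-\alpha/2}$ shows $b_n(t)^2$ is a constant times $t^{(n+1)(1-\alpha/2)}$, the constant being a product of Beta functions that decays superexponentially, so again $\sum_n b_n(t)<\infty$, uniformly on $[0,T]$ (this is the generalized Gronwall argument of \cite{HLN15}). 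Summing over $n$ gives the $L^m$-convergence of $(u^{(n)}(t,x))_n$ to $u(t,x)$ with the claimed moment bound; applying the same estimate to the tails $\sum_{k\ge n}D_k$ gives $L^m$-convergence of the stochastic convolutions, so $u$ satisfies Definition \ref{def-sol-sigma}, and a standard modification renders it predictable. For uniqueness, if $u,\tilde u$ are solutions with $\|(u-\tilde u)(t,x)\|_{L^m(\Omega)}\le c(T)\,p_t*|u_0|(x)$ on $[0,T]$ (which one verifies from the definition, or enforces by truncation), then $v:=u-\tilde u$ obeys $\|v(t,x)\|_{L^m(\Omega)}\le b_n(t)\,p_t*|u_0|(x)$ for every $n$ by the same recursion, hence $v\equiv0$.

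The hard part is precisely the unboundedness of $p_t*|u_0|(x)$ in $x$, which is why Lemma \ref{lem:Wm} is not directly applicable here: its right-hand side involves $\sup_x\|u(s,x)\|_{L^m(\Omega)}$, infinite in this setting. The Gaussian product identity is what rescues the iteration, by reducing the $\HH_0$-norm of $p_{t-s}(x-\cdot)(p_s*|u_0|)(\cdot)$ to $p_t*|u_0|(x)$ times the $\HH_0$-norm of a single translated heat kernel $p_{s(t-s)/t}$, so that the weight $p_t*|u_0|$ passes cleanly through every step; the remaining work is the fractional Gronwall estimate under \ref{Cov:Dalang}, which is explicit thanks to the scaling \ref{con:scaling}.
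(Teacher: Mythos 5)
Your proposal is correct in substance, but it is worth noting that the paper does not actually prove Theorem \ref{thm1}: it defers existence and uniqueness under \ref{Cov:Dalang} to \cite{HLN15} and, under \ref{Cov:bounded}, to a Picard iteration ``as in \cite{Huang}.'' What you have written is essentially a self-contained reconstruction of that deferred argument, and it is consistent with the machinery the paper does use elsewhere. In particular, your key device --- the factorization $p_{t-s}(x-y)p_s(y-z)=p_t(x-z)\,p_{s(t-s)/t}\bigl(y-\tfrac{sx+(t-s)z}{t}\bigr)$, which converts the $\HH_0$-norm of the weighted kernel into $p_t*|u_0|(x)$ times $\|p_{s(t-s)/t}\|_{\HH_0}=c_\mu^{1/2}(s(t-s)/t)^{-\alpha/4}$ --- is exactly the identity the authors use in Proposition \ref{prop:ZZep} and in the H\"older-continuity estimates, and the resulting recursion with kernel $(s(t-s)/t)^{-\alpha/2}$ is precisely what their Lemma \ref{lem:gronwall} is designed to absorb. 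Your diagnosis of why Lemma \ref{lem:Wm} cannot be applied verbatim (the $\sup_x$ there is infinite for measure-valued $u_0$) and your replacement of it by the weighted version is the right fix, and your computation of $\|p_\tau\|_{\HH_0}^2=c_\mu\tau^{-\alpha/2}$ from the scaling \ref{con:scaling} and Dalang's condition \eqref{k5} is correct.

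The one soft spot is uniqueness. Your recursion $\|v(t,x)\|_{L^m(\Omega)}\le b_n(t)\,p_t*|u_0|(x)$ requires the base-case bound $\|v(t,x)\|_{L^m(\Omega)}\le c(T)\,p_t*|u_0|(x)$ for the difference $v=u-\tilde u$ of two solutions, and Definition \ref{def-sol-sigma} only guarantees $\EE\|p_{t-\cdot}(x-\cdot)u\1_{[0,t]}\|_{\HH}^2<\infty$ pointwise in $(t,x)$, which does not by itself yield that weighted bound. You flag this (``verifies from the definition, or enforces by truncation'') but do not close it; to match \cite{HLN15} one should either restrict the uniqueness class to random fields satisfying $\sup_{t\le T}\sup_x\|u(t,x)\|_{L^2(\Omega)}/(p_t*|u_0|(x))<\infty$ (which your constructed solution does satisfy), or run a genuinely a priori iteration on $f(t,x)=\EE v(t,x)^2$ starting only from its finiteness. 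This is a standard, fixable gap in a sketch, not a flaw in the approach.
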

	When $u_0=\delta(\cdot-z)$, we denote the corresponding unique solution by $\Z(z; t, x)$. 
	In particular $\Z(z;\cdot,\cdot) $ is predictable and satisfies
	\begin{equation}\label{eqn:Z}
		\Z (z;t,x)=p_t(x-z)+\int_0^t\int_{\RR^\ell} p_{t-s}(x-y)\Z(z;s,y)W(ds,dy)
	 \end{equation}
	for all $t\ge0$ and $x\in\RR^\ell$. 	

	Next, we record a Gronwall-type lemma which will be useful later.
	\begin{lemma}\label{lem:gronwall}
		Suppose $\alpha\in[0,2)$ and $f$ is a locally bounded function on $[0,\infty)$ such that
		\begin{equation*}
			f_t\le A\int_0^t\lt(\frac{s(t-s)}t \rt)^{-\frac \alpha2}f_s ds+Bg_t \quad\mbox{for all}\quad t\ge0 \,,
		\end{equation*}
		where $A,B$ are positive constants and $g$ is non-decreasing function. Then there exists a constant $C_ \alpha$ such that
		\begin{equation*}
			f_t\le 2Bg_t e^{C_ \alpha A^{\frac2{2- \alpha}}t} \quad\mbox{for all}\quad t\ge0\,.
		\end{equation*}
	\end{lemma}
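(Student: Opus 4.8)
The plan is to remove the singular convolution term by an exponential‑weight absorption. Set $K(t,s):=\big(\tfrac{s(t-s)}{t}\big)^{-\alpha/2}$, fix a parameter $\beta>0$ to be chosen of the form $\beta=C_\alpha A^{2/(2-\alpha)}$, and put $v_t:=e^{-\beta t}f_t$. Multiplying the hypothesis by $e^{-\beta t}$ and writing $e^{-\beta t}f_s=e^{-\beta(t-s)}v_s$ gives
\begin{equation*}
	v_t\le A\int_0^t K(t,s)e^{-\beta(t-s)}v_s\,ds+Be^{-\beta t}g_t\le A\int_0^t K(t,s)e^{-\beta(t-s)}v_s\,ds+Bg_t\,,
\end{equation*}
where we used $e^{-\beta t}\le 1$. (In the intended applications $f$ and $g$ are non‑negative, and we assume this throughout; it is only used to know $\sup_{[0,T]}v\ge 0$ below.)

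The heart of the argument is the uniform bound
\begin{equation}\label{eq:gronwall-plan-kernel}
	\sup_{t\ge0}\int_0^t K(t,s)e^{-\beta(t-s)}\,ds\le \tilde c_\alpha\,\beta^{-(1-\alpha/2)}
\end{equation}
for a constant $\tilde c_\alpha$ depending only on $\alpha$. I would prove \eqref{eq:gronwall-plan-kernel} by the change of variables $s=tr$ followed by $u=1-r$, which turns $\int_0^tK(t,s)e^{-\beta(t-s)}\,ds$ into $t^{1-\alpha/2}\int_0^1(1-u)^{-\alpha/2}u^{-\alpha/2}e^{-\beta t u}\,du$, and then splitting the $u$‑integral at $\tfrac12$. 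On $(0,\tfrac12]$ one bounds $(1-u)^{-\alpha/2}\le 2^{\alpha/2}$ and dominates by $\int_0^\infty u^{-\alpha/2}e^{-\beta t u}\,du=\Gamma(1-\tfrac\alpha2)(\beta t)^{-(1-\alpha/2)}$, so that after the prefactor $t^{1-\alpha/2}$ the $t$‑dependence cancels and this part is $\le 2^{\alpha/2}\Gamma(1-\tfrac\alpha2)\,\beta^{-(1-\alpha/2)}$. On $[\tfrac12,1)$ one bounds $u^{-\alpha/2}\le 2^{\alpha/2}$ and $e^{-\beta t u}\le e^{-\beta t/2}$, so this part is at most a constant times $t^{1-\alpha/2}e^{-\beta t/2}$, whose supremum over $t\ge 0$ is again of order $\beta^{-(1-\alpha/2)}$. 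Adding the two contributions gives \eqref{eq:gronwall-plan-kernel}. (When $\alpha=0$ the kernel is identically $1$ and the integral is just $\le 1/\beta$, so the bound is immediate.) Since $1-\tfrac\alpha2=\tfrac{2-\alpha}2$, I would then choose $\beta=C_\alpha A^{2/(2-\alpha)}$ with $C_\alpha:=(2\tilde c_\alpha)^{2/(2-\alpha)}$, so that $A$ times the right‑hand side of \eqref{eq:gronwall-plan-kernel} is exactly $\tfrac12$.

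The absorption step then runs as follows. Fix $T\ge0$ and set $V_T:=\sup_{0\le t\le T}v_t$, which satisfies $0\le V_T<\infty$ because $f$ is locally bounded and non‑negative. Inserting $v_s\le V_T$ into the inequality for $v_t$ above, and using monotonicity of $g$ (so $g_t\le g_T$ for $t\le T$) together with \eqref{eq:gronwall-plan-kernel}, we get for every $t\in[0,T]$
\begin{equation*}
	v_t\le V_T\cdot A\int_0^t K(t,s)e^{-\beta(t-s)}\,ds+Bg_t\le \tfrac12 V_T+Bg_T\,.
\end{equation*}
Taking the supremum over $t\in[0,T]$ yields $V_T\le\tfrac12V_T+Bg_T$, hence $V_T\le 2Bg_T$. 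Applying this with $T=t$ gives $v_t\le 2Bg_t$, that is, $f_t\le 2Bg_t\,e^{C_\alpha A^{2/(2-\alpha)}t}$, which is the claim.

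I expect the main obstacle to be the kernel estimate \eqref{eq:gronwall-plan-kernel}: $K(t,s)$ is singular at \emph{both} endpoints $s=0$ and $s=t$, so the exponential factor cannot simply be ``used up'' at one point, and one must track the precise power $\beta^{-(1-\alpha/2)}$ rather than mere finiteness in order to recover both the exponent $2/(2-\alpha)$ on $A$ and the explicit constant $2$ in front of $Bg_t$. The scaling substitution $s=tr$ is what makes the bound uniform in $t$; everything after \eqref{eq:gronwall-plan-kernel} is a routine Gronwall‑type absorption.
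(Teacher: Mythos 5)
Your proposal is correct and follows essentially the same route as the paper: an exponential weight $e^{-\beta t}$, a uniform bound of order $\beta^{-(2-\alpha)/2}$ on $\int_0^t\bigl(\tfrac{s(t-s)}{t}\bigr)^{-\alpha/2}e^{-\beta(t-s)}\,ds$ (the paper gets it by the symmetry $s\mapsto t-s$ rather than your scaling substitution $s=tr$, but both reduce to $\int_0^\infty u^{-\alpha/2}e^{-\beta u}\,du$), and then choosing $\beta=C_\alpha A^{2/(2-\alpha)}$ so the convolution term is absorbed with coefficient $\tfrac12$. The only cosmetic difference is your explicit remark on non-negativity of $f$, which the paper leaves implicit.
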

	\begin{proof}
		Fix $T>0$. For each $\rho>0$, denote $D_ \rho=\sup_{t\in[0,T]}f_t e^{-\rho t}$. It follows that
		\begin{align*}
			D_ \rho\le A\int_0^t\lt(\frac{s(t-s)}t \rt)^{-\frac \alpha2} e^{-\rho(t-s)}ds D_ \rho+Bg_T\,.
		\end{align*}
		It is easy to see
		\begin{align*}
			\int_0^t\lt(\frac{s(t-s)}t \rt)^{-\frac \alpha2} e^{-\rho(t-s)}ds
			&\le 2\int_{\frac t2}^t\lt(\frac{s(t-s)}t \rt)^{-\frac \alpha2} e^{-\rho (t-s)}ds
			\\&\le 2^{1+\frac \alpha2}\int_0^\infty s^{-\frac \alpha2}e^{-\rho s}ds
			\\&\le C \rho^{-\frac {2-\alpha}2}
		\end{align*}
		for some suitable constant $C$ depending only on $\alpha$. We then choose $\rho=(2AC)^{\frac2{2- \alpha}} $ so that $AC\rho^{-\frac{2- \alpha}2}=\frac12$. This leads to $D_ \rho\le 2Bg_T$, which implies the result.
	\end{proof}
	Let us conclude this section by introducing a few key notation which we will use throughout the article.
	Let $B=(B(t),t\ge0)$ denote a standard Brownian motion in $\RR^\ell$ starting at the origin. For each $t>0$, we denote
	\begin{equation}\label{def:Bbridge}
		B_{0,t}(s)=B(s)-\frac st B(t)\quad\forall s\in[0,t]\,.
	\end{equation}
	The process $B_{0,t}=(B_{0,t}(s),0\le s\le t)$ is independent from $B(t)$ and is a Brownian bridge which starts and ends at the origin.
	An important connection between $B$ and $B_{0,t}$ is the following identity. For every $\lambda\in(0,1)$ and every bounded measurable function $F$ on $C([0,\lambda t];\RR^d) $ we have
	\begin{multline}\label{id.BBBM}
		\EE\lt[F(\{B_{0,t} (s);0\le s\le \lambda t\}) \rt]
		\\=(1- \lambda)^{-\frac d2}\EE\lt[\exp\lt\{-\frac{|B(\lambda t)|^2}{2(1- \lambda)t} \rt\}F(\{B (s);0\le s\le \lambda t\})  \rt]\,.
	\end{multline}
	This is in fact an application of Girsanov's theorem, see \cite{HLN15}*{Eq. (2.8)} for more details. 
	Let $B^1,B^2,\dots$ be independent copies of $B$ and $B^{1}_{0,t},B^2_{0,t},\dots$ be the corresponding Brownian bridges. An important quantity which appears frequently in our consideration is
	\begin{equation}\label{def:Thetam}
		\Theta_t(m):=\sup_{s\in(0,t]} \EE\exp\lt\{\int_0^s\sum_{1\le j<k\le m}\gamma(B_{0,s}^j(r)-B_{0,s}^k(r))dr \rt\}\,.
	\end{equation}  	   
	From the proof of Proposition 4.2 in \cite{HLN15}, it is easy to see that under one of the hypotheses \ref{Cov:bounded} and \ref{Cov:Dalang}, $\Theta_t(m) < \infty$ for any $t>0$. Finally, $A\lesssim E$ means $A\le CE$ for some positive constant $C$, independent from all the terms appearing in $E$.
\section{Variations}\label{sec:variations}
		We introduce two variational quantities and give their basic properties and relations. The high moment asymptotic is governed by a variational quantity which  is known as the Hartree energy (cf. \cite{ChPh15}). 
		If there exists a locally integrable function $\gamma$ whose Fourier transform is $\mu$, then the Hartree energy can be expressed as
		\begin{equation}\label{eqn:ceeg}
			\cee_H(\gamma)=\sup_{g\in\cgg}\lt\{\int_{\RR^\ell}\int_{\RR^\ell}\gamma(x-y)g^2(x)g^2(y)dxdy-\int_{\RR^\ell}|\nabla g(x) |^2dx \rt\}\,,
		\end{equation}
		where $\cgg$ is the set
		\begin{equation}\label{eq:G}
			\cgg=\lt\{g\in W^{1,2}(\RR^\ell):\|g\|_{L^2(\RR^\ell)}=1\rt\}\,.
		\end{equation}
		The subscript $H$ stands for ``Hartree''.
		We can also write this variation in Fourier mode. Indeed, the presentation \eqref{eqn:gamspec} leads to
		\begin{align*}
			\iint_{\RR^\ell\times\RR^\ell}\gamma(x-y)g^2(x)g^2(y)dxdy
			&=(2 \pi)^{-\ell}\int_{\RR^\ell}|\cff[g^2](\xi)|^2\mu(\xi)d \xi
			\\&=(2 \pi)^{-3\ell}\int_{\RR^\ell}|\cff g*\cff g(\xi)|^2\mu(\xi)d \xi\,.
		\end{align*}
		Setting $h=(2 \pi)^{-\frac\ell2}\cff g$ so that $\|h\|_{L^2}=1$, we arrive at
		\begin{equation}\label{eqn:cee}
			\cee_H(\gamma)=\sup_{h\in\caa}\lt\{(2 \pi)^{-\ell} \int_{\RR^\ell} |h*h(\xi)|^2 \mu( \xi)d \xi-\int_{\RR^\ell}|h(\xi)|^2|\xi|^2 d \xi \rt\}
		\end{equation}
		where
		\begin{equation*}
			\caa=\lt\{h:\RR^\ell\to\CC\,\Big|\,\|h\|_{L^2(\RR^\ell)}=1,\int_{\RR^\ell}|\xi|^2|h(\xi)|^2 d \xi<\infty \mbox{ and } \overline{h(\xi)}=h(- \xi)\rt\}\,.
		\end{equation*}
		Under \ref{Cov:bounded}, from \eqref{eqn:ceeg}, we upper bound $\gamma(x-y)$ by $\gamma(0)$, it follows that $\cee_H(\gamma)\le \gamma(0)$, which is finite.
		The fact that this variation (either in the form \eqref{eqn:ceeg} or \eqref{eqn:cee}) is finite under the condition \ref{Cov:Dalang} is not immediate. In some special cases, this is verified in \cite{MR3414457} and  \cite{Chetal16}.
		\begin{proposition}
			Suppose \eqref{k5} holds. 
			Then $\cee_H(\gamma)$ is finite.
		\end{proposition}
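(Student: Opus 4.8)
The plan is to show that the potential (Hartree) term in the variation is dominated by a small fraction of the Dirichlet (kinetic) term plus a constant depending only on $\mu$ and $\ell$; once this is established, the supremum defining $\cee_H(\gamma)$ is automatically bounded by that constant. I would work with the Fourier representation \eqref{eqn:cee}, which also makes sense in the case $\gamma=\delta_0$: writing $h=(2\pi)^{-\ell/2}\cff g$ with $g\in\cgg$, the potential term equals $(2\pi)^{-\ell}\int_{\RR^\ell}|\cff[g^2](\xi)|^2\mu(\xi)d\xi$ and the kinetic term equals $\|\nabla g\|_{L^2(\RR^\ell)}^2$. The idea is then to split the frequency integral at a threshold $|\xi|=M$, to be chosen at the end.

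For the low frequencies $|\xi|\le M$: since $\|\cff[g^2]\|_{L^\infty}\le\|g^2\|_{L^1}=\|g\|_{L^2}^2=1$, and since \eqref{k5} gives $\int_{|\xi|\le M}\mu(\xi)d\xi\le(1+M^2)\int_{\RR^\ell}\frac{\mu(\xi)}{1+|\xi|^2}d\xi<\infty$, this contribution is at most $C_M:=(2\pi)^{-\ell}\int_{|\xi|\le M}\mu(\xi)d\xi$, a finite constant independent of $g$. For the high frequencies $|\xi|>M$: I would use the pointwise bound $|\xi|\,|\cff[g^2](\xi)|=|\cff[\nabla(g^2)](\xi)|\le\|\nabla(g^2)\|_{L^1}=2\|g\,\nabla g\|_{L^1}\le 2\|g\|_{L^2}\|\nabla g\|_{L^2}=2\|\nabla g\|_{L^2}$ (Cauchy--Schwarz together with $\|g\|_{L^2}=1$), so that this contribution is at most $4(2\pi)^{-\ell}\|\nabla g\|_{L^2}^2\int_{|\xi|>M}\frac{\mu(\xi)}{|\xi|^2}d\xi$. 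For $M\ge1$ one has $\frac1{|\xi|^2}\le\frac2{1+|\xi|^2}$ on $\{|\xi|>M\}$, so by \eqref{k5} and dominated convergence $\int_{|\xi|>M}\frac{\mu(\xi)}{|\xi|^2}d\xi\to0$ as $M\to\infty$; I then fix $M$ so large that $4(2\pi)^{-\ell}\int_{|\xi|>M}\frac{\mu(\xi)}{|\xi|^2}d\xi\le\frac12$.

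Combining the two pieces, for every $g\in\cgg$ the potential term minus the kinetic term is at most $C_M+\frac12\|\nabla g\|_{L^2}^2-\|\nabla g\|_{L^2}^2\le C_M$, and taking the supremum over $g$ yields $\cee_H(\gamma)\le C_M<\infty$. I do not anticipate a genuine obstacle in this argument; the only step requiring a little care is the decay estimate $|\cff[g^2](\xi)|\lesssim|\xi|^{-1}\|\nabla g\|_{L^2}$, which is what converts Dalang's condition \eqref{k5} into control of the potential term by the Dirichlet energy. Everything else is bookkeeping with the split of $\mu$ into its near and far parts.
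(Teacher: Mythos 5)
Your proof is correct and follows essentially the same route as the paper's: a low/high frequency split of $\mu$ at a threshold chosen via Dalang's condition \eqref{k5}, with the potential term bounded by a finite constant on the low frequencies and absorbed into the Dirichlet energy on the high frequencies. The only cosmetic difference is that you derive the decay bound $|\xi|\,|\cff[g^2](\xi)|\le 2\|\nabla g\|_{L^2}$ on the physical side via $\cff[\nabla(g^2)]$, whereas the paper obtains the identical estimate (with the same constant $4$) on the Fourier side by splitting $|\xi|^2\le 2|\xi-\xi'|^2+2|\xi'|^2$ inside the convolution $h*h$ and applying Cauchy--Schwarz.
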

		\begin{proof}
			Our proof is based on the argument in \cite{Chetal16}*{Proposition 3.1}. Here, however, we work on the frequency space and use the presentation \eqref{eqn:cee}. Let $h$ be in $\caa$. Applying Cauchy-Schwarz inequality yields
			\begin{align*}
				|h*h(\xi)|^2=\lt|\int_{\RR^\ell}h(\xi- \xi')h(\xi')d \xi'\rt|^2
				\le \int_{\RR^\ell}|h(\xi- \xi')|^2d \xi'\int_{\RR^\ell}|h(\xi')|^2d \xi'=1\,.
			\end{align*}
			On the other hand, using the elementary inequality
			\begin{equation*}
				|\xi|^2\le 2|\xi- \xi'|^2+2|\xi'|^2
			\end{equation*}
			and Cauchy-Schwarz inequality, we also get
			\begin{align*}
				|\xi|^2 |h*h(\xi)|^2
				&\le {2}\lt|\int_{\RR^\ell}h(\xi -\xi')|\xi- \xi'|h(\xi')d \xi' \rt|^2+{2}\lt|\int_{\RR^\ell}h(\xi -\xi')|\xi'|h(\xi')d \xi' \rt|^2
				\\&\le{ 4}\int_{\RR^\ell}|h(\xi')|^2|\xi'|^2 d \xi'\,.
			\end{align*}
			Then, for every $R>0$ we have
			\begin{align*}
				\int_{\RR^\ell}|h*h(\xi)|^2 \mu(\xi)d \xi
				&=\int_{|\xi|\le R}|h*h(\xi)|^2 \mu(\xi)d \xi+\int_{|\xi|>R}|h*h(\xi)|^2 \mu(\xi)d \xi
				\\&\le \int_{|\xi|\le R} \mu(\xi)d \xi+{4}\int_{|\xi|>R}\frac{ \mu(\xi)}{|\xi|^2} d \xi\int_{\RR^\ell}|h(\xi)|^2|\xi|^2 d \xi\,.
			\end{align*}
			We now choose $R$ sufficiently large so that ${ 4}(2 \pi)^{-\ell} \int_{|\xi|>R}\frac{ \mu(\xi)}{|\xi|^2} d \xi<1$. This implies
			\begin{align*}
				(2 \pi)^{-\ell}\int_{\RR^\ell}|h*h(\xi)|^2 \mu(\xi)d \xi-\int_{\RR^\ell}|h(\xi)|^2|\xi|^2 d \xi \le (2 \pi)^{-\ell}\int_{|\xi|\le R} \mu(\xi)d \xi
			\end{align*}
			for all $g$ in $\caa$, which finishes the proof.
		\end{proof}
		In establishing the lower bound of spatial asymptotic, another variation arises, which is given by
		\begin{equation}\label{eqn:rootcee}
			\cmm(\gamma)=\sup_{g\in\cgg}\lt\{\lt(\int_{\RR^\ell}\int_{\RR^\ell}\gamma(x-y)g^2(x)g^2(y)dxdy\rt)^{\frac12}-\frac12\int_{\RR^\ell}|\nabla g(x) |^2dx \rt\}\,,
		\end{equation}
		or alternatively in frequency mode
		\begin{equation}\label{eqn:rootceefourier}
			\cmm(\gamma)=\sup_{h\in\caa}\lt\{\lt((2 \pi)^{-\ell} \int_{\RR^\ell} |h*h(\xi)|^2 \mu( \xi)d \xi\rt)^{\frac12}-\frac12\int_{\RR^\ell}|h(\xi)|^2|\xi|^2 d \xi \rt\}\,.
		\end{equation}
		Under the scaling condition \ref{con:scaling}, $\cee_H$ and $\cmm$ are linked together by the following result. 
		\begin{proposition}\label{prop:ME} Assuming condition \ref{con:scaling}, $\cee_H(\gamma)$ is finite if and only if $\cmm(\gamma)$ is finite. In addition,
			\begin{equation*}
				\cmm(\gamma)=\frac{4- \alpha}4 \lt( \frac{2\cee_H(\gamma)}{2- \alpha} \rt)^{\frac{2- \alpha}{4- \alpha}}\,.
			\end{equation*}
		\end{proposition}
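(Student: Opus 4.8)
The plan is to exploit the dilation invariance of the admissible class $\cgg$. For $g\in\cgg$ and $\theta>0$ set $g_\theta(x)=\theta^{\ell/2}g(\theta x)$; then $g_\theta\in\cgg$ as well. Writing $Q(g)=\iint_{\RR^\ell\times\RR^\ell}\gamma(x-y)g^2(x)g^2(y)\,dx\,dy$ and $D(g)=\int_{\RR^\ell}|\nabla g(x)|^2\,dx$, a change of variables together with the scaling relation $\gamma(cx)=c^{-\alpha}\gamma(x)$ — or, when $\gamma$ is only the Dirac mass, the equivalent relation $\mu(c\xi)=c^{\alpha-\ell}\mu(\xi)$ applied to the Fourier representations \eqref{eqn:cee} and \eqref{eqn:rootceefourier} — shows that $Q(g_\theta)=\theta^\alpha Q(g)$ and $D(g_\theta)=\theta^2 D(g)$. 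Since $g_1=g$, the collection $\{g_\theta:g\in\cgg,\ \theta>0\}$ exhausts $\cgg$, so I may rewrite $\cee_H(\gamma)=\sup_{g\in\cgg}\sup_{\theta>0}\{\theta^\alpha Q(g)-\theta^2 D(g)\}$ and $\cmm(\gamma)=\sup_{g\in\cgg}\sup_{\theta>0}\{\theta^{\alpha/2}Q(g)^{1/2}-\tfrac12\theta^2 D(g)\}$.

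Next I would carry out the inner maximization over $\theta$ for each fixed $g$. First one records that $D(g)>0$ for every $g\in\cgg$ (a constant is not in $L^2(\RR^\ell)$) and $Q(g)>0$ (since $\mu\not\equiv0$ and $\cff[g^2](0)=\|g\|_{L^2}^2=1$, so $Q(g)=(2\pi)^{-\ell}\int_{\RR^\ell}|\cff[g^2](\xi)|^2\mu(\xi)\,d\xi>0$), while $Q(g)$ may a priori equal $+\infty$. Because $\alpha\in(0,2)$, the maps $\theta\mapsto a\theta^\alpha-b\theta^2$ and $\theta\mapsto c\theta^{\alpha/2}-\tfrac12 b\theta^2$ (with $b>0$ and $a,c\in(0,\infty)$) attain an interior maximum, and an elementary computation gives $\sup_{\theta>0}\{\theta^\alpha Q-\theta^2 D\}=\frac{2-\alpha}{2}(\tfrac\alpha2)^{\frac{\alpha}{2-\alpha}}Q^{\frac{2}{2-\alpha}}D^{-\frac{\alpha}{2-\alpha}}$ and $\sup_{\theta>0}\{\theta^{\alpha/2}Q^{1/2}-\tfrac12\theta^2 D\}=\frac{4-\alpha}{4}(\tfrac\alpha2)^{\frac{\alpha}{4-\alpha}}Q^{\frac{2}{4-\alpha}}D^{-\frac{\alpha}{4-\alpha}}$, both understood as $+\infty$ when $Q=+\infty$.

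The decisive observation is that both right-hand sides depend on $g$ only through the single scale-invariant quantity $N(g):=Q(g)\,D(g)^{-\alpha/2}$, since $Q^{\frac{2}{2-\alpha}}D^{-\frac{\alpha}{2-\alpha}}=N(g)^{\frac{2}{2-\alpha}}$ and $Q^{\frac{2}{4-\alpha}}D^{-\frac{\alpha}{4-\alpha}}=N(g)^{\frac{2}{4-\alpha}}$. Setting $\mathcal N:=\sup_{g\in\cgg}N(g)\in(0,\infty]$, this yields $\cee_H(\gamma)=\frac{2-\alpha}{2}(\tfrac\alpha2)^{\frac{\alpha}{2-\alpha}}\mathcal N^{\frac{2}{2-\alpha}}$ and $\cmm(\gamma)=\frac{4-\alpha}{4}(\tfrac\alpha2)^{\frac{\alpha}{4-\alpha}}\mathcal N^{\frac{2}{4-\alpha}}$. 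In particular $\cee_H(\gamma)<\infty\iff\mathcal N<\infty\iff\cmm(\gamma)<\infty$, and solving the first identity for $\mathcal N$ and substituting into the second makes the $(\alpha/2)$-powers cancel, leaving exactly $\cmm(\gamma)=\frac{4-\alpha}{4}\big(\frac{2\cee_H(\gamma)}{2-\alpha}\big)^{\frac{2-\alpha}{4-\alpha}}$.

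The argument is essentially bookkeeping, so I do not expect a serious obstacle; the points that require a little care are (i) confirming $0<D(g)<\infty$ and $Q(g)>0$ so that the $\theta$-optimization is non-degenerate and no division by zero occurs, and (ii) handling the case $Q(g)=+\infty$ consistently, where it forces $\cee_H(\gamma)=\cmm(\gamma)=+\infty$ in agreement with $\mathcal N=\infty$. The exponent algebra, although it is the substance of the formula, is routine once the scale-invariant quantity $\mathcal N$ has been isolated.
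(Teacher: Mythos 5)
Your proof is correct and is essentially the paper's argument: the paper routes both $\cee_H(\gamma)$ and $\cmm(\gamma)$ through the best constant $\kappa(\gamma)$ in the interpolation inequality \eqref{ineq:interpolation} (Proposition \ref{prop:interpolation}), which is exactly your scale-invariant quantity $\mathcal N=\sup_{g\in\cgg}Q(g)D(g)^{-\alpha/2}$, obtained by the same dilation $g_\theta(x)=\theta^{\ell/2}g(\theta x)$ and optimization over $\theta$. You merely inline that reduction rather than quoting it, and your exponent algebra checks out.
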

		Before giving the proof, let us see how \eqref{eqn:ceeg} and \eqref{eqn:rootcee} are connected to a certain interpolation inequality.
		Under scaling condition \ref{con:scaling}, it is a routine procedure in analysis to connect the finiteness of $\cee_H(\gamma)$ with a certain interpolation inequality. For instance, when $\gamma= \delta$ and $\ell=1$, the fact that
		\begin{equation*}
			\sup_{g\in\cgg}\lt\{\int_{\RR}g^4(x)dx-\int_\RR|g'(x)|^2dx \rt\}<\infty
		\end{equation*}
		is equivalent to the following Gagliardo--Nirenberg inequality
		\begin{equation*}
			\|g\|_{L^4}\le C\|g\|^{3/4}_{L^2}\|g'\|^{1/4}_{L^2}
		\end{equation*}
		for all $g$ in $W^{1,2}(\RR)$. For readers convenience, we provide a brief explanation below. 
		\begin{proposition}\label{prop:interpolation}
			Assume that the scaling relation \ref{con:scaling} holds. 
			
			\noindent{\upshape(i)} If $\cee_H(\gamma)$ is finite then there exists $\kappa>0$ such that for all $g$ in $W^{1,2}(\RR^\ell)$
			\begin{equation}\label{ineq:interpolation}
				\iint_{\RR^\ell\times\RR^\ell}\gamma(x-y)g^2(x)g^2(y)dxdy
				\le  \kappa \lt(\int_{\RR^\ell}|g(x) |^2dx \rt)^{2-\frac \alpha2} 
				\lt(\int_{\RR^\ell}|\nabla g(x) |^2dx \rt)^{\frac \alpha2} \,,
			\end{equation}
			In addition the constant $\kappa$ can be chosen to be
			\begin{equation}\label{id:bestkappa}
			 	\kappa:= \kappa(\gamma) :=\frac2 \alpha \lt(\frac{\alpha }{2- \alpha}\cee_H(\gamma) \rt)^{\frac{2- \alpha}{2}}\,.
			\end{equation}

			\noindent {\upshape(ii)} If \eqref{ineq:interpolation} holds for some finite constant $\kappa>0$, then $\cee_H(\gamma)$ is finite and the best constant in \eqref{ineq:interpolation} is $\kappa(\gamma)$.
		\end{proposition}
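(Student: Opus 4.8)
The plan is to deduce the interpolation inequality directly from the definition of $\cee_H(\gamma)$, exploiting the scaling invariance in \ref{con:scaling} to reduce each direction to an elementary one-variable optimization. For $g\in W^{1,2}(\RR^\ell)$ with $g\not\equiv 0$ I abbreviate $A(g)=\iint_{\RR^\ell\times\RR^\ell}\gamma(x-y)g^2(x)g^2(y)\,dxdy$, $N(g)=\int_{\RR^\ell}|g|^2$ and $D(g)=\int_{\RR^\ell}|\nabla g|^2$; note $D(g)>0$, since a nonzero constant is not in $L^2(\RR^\ell)$. For $a,b>0$ put $g_{a,b}(x)=a\,g(bx)$; a change of variables together with the induced relation $\gamma(cx)=c^{-\alpha}\gamma(x)$ gives
\[
N(g_{a,b})=a^2b^{-\ell}N(g),\qquad D(g_{a,b})=a^2b^{2-\ell}D(g),\qquad A(g_{a,b})=a^4b^{\alpha-2\ell}A(g)
\]
(with the convention $A(g)=\int g^4$ and $\alpha=\ell$ when $\gamma=\delta_0$).

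For part (i), fix $g\not\equiv0$ and $b>0$, and apply the definition of $\cee_H(\gamma)$ to the normalized rescaling $\widehat g=b^{\ell/2}N(g)^{-1/2}g(b\cdot)\in\cgg$. From $A(\widehat g)-D(\widehat g)\le\cee_H(\gamma)$ and the scaling identities, isolating $A(g)$ yields
\[
A(g)\le b^{-\alpha}\cee_H(\gamma)\,N(g)^2+b^{2-\alpha}N(g)D(g)\qquad\text{for every }b>0 .
\]
When $\cee_H(\gamma)>0$ I minimize the right-hand side over $b>0$: the minimizer is $b_*^2=\tfrac{\alpha\,\cee_H(\gamma)\,N(g)}{(2-\alpha)D(g)}$, and the minimum value simplifies (this is the one spot demanding careful bookkeeping of exponents) to exactly $\kappa(\gamma)\,N(g)^{2-\alpha/2}D(g)^{\alpha/2}$ with $\kappa(\gamma)$ as in \eqref{id:bestkappa}, which is \eqref{ineq:interpolation}. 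The degenerate case $\cee_H(\gamma)=0$ (hence $\kappa(\gamma)=0$) follows by letting $b\downarrow0$ in the displayed bound, which forces $A(g)\le0$; since $\gamma$ is non-negative definite, $A(g)\ge0$ as well, so $A(g)=0$.

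For part (ii), suppose \eqref{ineq:interpolation} holds with some finite $\kappa>0$. Restricting to $g\in\cgg$ gives $A(g)-D(g)\le \kappa\,D(g)^{\alpha/2}-D(g)$, and since $\alpha<2$ the map $d\mapsto \kappa d^{\alpha/2}-d$ on $(0,\infty)$ attains its maximum $\Phi(\kappa):=\tfrac{2-\alpha}{2}\big(\tfrac{\alpha}{2}\big)^{\alpha/(2-\alpha)}\kappa^{2/(2-\alpha)}$. Taking the supremum over $g\in\cgg$ yields $\cee_H(\gamma)\le\Phi(\kappa)<\infty$, so $\cee_H(\gamma)$ is finite. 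Finally $\kappa\mapsto\Phi(\kappa)$ is strictly increasing, and a direct substitution gives $\Phi(\kappa(\gamma))=\cee_H(\gamma)$; hence any admissible $\kappa$ must satisfy $\kappa\ge\kappa(\gamma)$, while part (i) shows $\kappa(\gamma)$ is admissible. Therefore $\kappa(\gamma)$ is the best constant in \eqref{ineq:interpolation}.

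I do not expect a genuine obstacle here: the whole argument is a scaling reduction followed by a Lagrange/AM--GM-type optimization in one real variable. The points that need attention are (a) checking that the two optimizations genuinely output the stated constant $\kappa(\gamma)$ of \eqref{id:bestkappa} --- a short but error-prone exponent calculation --- and (b) disposing of the trivial cases $g\equiv0$, $D(g)=0$ and $\cee_H(\gamma)=0$, together with the interpretation of $A(g)$ and of the exponent $\alpha$ in the space--time white noise case $\gamma=\delta_0$.
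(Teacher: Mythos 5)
Your proof is correct and follows essentially the same route as the paper: both apply the definition of $\cee_H(\gamma)$ to the normalized dilations of $g$, use the scaling relation to obtain a one-parameter family of inequalities, and optimize over the dilation parameter to extract \eqref{ineq:interpolation} with the constant \eqref{id:bestkappa}, while part (ii) in both cases bounds $A(g)-D(g)$ by $\sup_{d>0}\{\kappa d^{\alpha/2}-d\}$ and inverts the resulting relation between $\kappa$ and $\cee_H(\gamma)$. The exponent bookkeeping in your one-variable optimizations checks out.
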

		\begin{proof}
		Recall that $\cgg$ is defined in \eqref{eq:G}. 
			
			(i) Let $g$ be in $\cgg$. For each $\theta>0$, the function $x\mapsto g_ \theta(x):=\theta^{\frac\ell2}{g(\theta x)}$ also belongs to $\cgg$. Hence,
			\begin{align*}
				\iint_{\RR^\ell\times\RR^\ell}\gamma(x-y)g_ \theta^2(x)g_ \theta^2(y)dxdy-\int_{\RR^\ell}|\nabla g_ \theta(x)|^2 dx\le \cee_H(\gamma)\,.
			\end{align*}
			Writing these integrals back to $g$ and using \ref{con:scaling} yields
			\begin{align*}
				\theta^ \alpha\iint_{\RR^\ell\times\RR^\ell}\gamma(x-y)g^2(x)g^2(y)dxdy- \theta^2\int_{\RR^\ell}|\nabla g(x)|^2 dx\le \cee_H(\gamma)
			\end{align*}
			for all $\theta>0$. Optimizing the left-hand side (with respect to $\theta$) leads to
			\begin{align*}
				\frac{2- \alpha}{\alpha}\lt(\frac\alpha2\rt)^{\frac2{2- \alpha}}\lt(\iint_{\RR^\ell\times\RR^\ell}\gamma(x-y)g^2(x)g^2(y)dxdy \rt)^{\frac2{2- \alpha}}\le \cee_H(\gamma)\lt(\int_{\RR^\ell}|\nabla g(x)|^2 dx\rt)^{\frac \alpha{2- \alpha}}\,.
			\end{align*}
			Removing the normalization $\|g\|_{L^2}=1$ and some algebraic manipulation yields the result.

			\noindent (ii) Let $\kappa_0$ be the best constant in \eqref{ineq:interpolation}. Then for every $g\in\cgg$, 
			\begin{align*}
				\iint_{\RR^\ell\times\RR^\ell}\gamma(x-y)g^2(x)g^2(y)dxdy-\int_{\RR^\ell}|\nabla g(x)|^2 dx
				&\le \kappa_0 \|\nabla g\|^{\alpha}_{L^2}-\|\nabla g\|^{2}_{L^2}
				\\&\le \sup_{\theta>0}\{\kappa_0 \theta^ \alpha- \theta^2 \}=\frac{2- \alpha}{\alpha}( \frac\alpha2 \kappa_0)^{\frac 2{2- \alpha}}\,.
			\end{align*}
			This shows $\cee_H(\gamma)$ is finite and at most $\frac{2- \alpha}{\alpha}( \frac\alpha2 \kappa_0)^{\frac 2{2- \alpha}}$, which also means $\kappa(\gamma)\le \kappa_0$. On the other hand, (i) already implies $\kappa_0\le \kappa(\gamma)$, hence completes the proof. 
		\end{proof}
		\begin{proof}[Proof of Proposition \ref{prop:ME}]
			Reasoning as in Proposition \ref{prop:interpolation}, we see that $\cmm(\gamma)$ is finite if and only if \eqref{ineq:interpolation} holds for some constant $\kappa>0$. In addition, the best constant $\kappa(\gamma)$ in \eqref{ineq:interpolation} satisfies the relation
			\begin{equation*}
				\cmm(\gamma)=\frac{4- \alpha}{4} \lt(\frac \alpha2\rt)^{\frac \alpha{4- \alpha}}(\kappa(\gamma))^{\frac2{4- \alpha}}\,.
			\end{equation*}
			Together with \eqref{id:bestkappa}, this yields the result.
		\end{proof}
		{The following result preludes the connection between $\cee_H,\cmm$ with exponential functional of Brownian motions}.
		\begin{lemma}\label{lem:expBB}
		Let $\{B(s),s\ge0\}$ be a Brownian motion in $\RR^n$ and $D$ be a bounded open domain in $\RR^n$ containing 0. Let $h(s,x)$ be a bounded function defined on $[0,1]\times\RR^n$ which is continuous in $x$ and equicontinuous (over $x\in\RR^n$) in $s$. Then
		\begin{multline}\label{lim:fBB}
			\lim_{t\to\infty}\frac1t\log \EE \lt[\exp\left\{\int_0^t h\left(\frac st,B(s)-\frac st B(t)\right)ds \right\};\tau_D\ge t\rt]
			\\=\int_0^1\sup_{g\in \cgg_D }\left\{\int_{D} h(s,x) g^2(x)dx-\frac12\int_{D}|\nabla g(x)|^2dx \right\}ds\,,
		\end{multline}
		where $\cgg_D$ is the class of functions $g$ in $W^{1,2}(\RR^n)$ such that $\int_{D}|g(x)|^2dx=1$ and $\tau_D$ is the exit time $\tau_D := \inf\{t\geq 0: B_t \notin D\}$. 
	\end{lemma}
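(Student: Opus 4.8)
The plan is to read \eqref{lim:fBB} as a Feynman--Kac large-deviation identity for the Brownian bridge $s\mapsto B_{0,t}(s)=B(s)-\frac st B(t)$ killed upon exiting $D$, and to reduce it, by a ``freezing'' (time-discretization) of the $s$-variable of $h$, to the classical statement that for bounded measurable $V\colon\RR^n\to\RR$,
\begin{equation}\label{plan:classical}
	\lim_{T\to\infty}\frac1T\log\sup_{x\in D}\EE_x\lt[\exp\lt\{\int_0^T V(B(s))ds\rt\};\tau_D> T\rt]=\lambda_D(V):=\sup_{g\in\cgg_D}\lt\{\int_D Vg^2-\frac12\int_D|\nabla g|^2\rt\},
\end{equation}
together with its matching lower counterpart $\liminf_{T\to\infty}\frac1T\log\inf_{x\in K}\EE_x[\exp\{\int_0^T V(B(s))ds\};\tau_D> T,\ B(T)\in K]\ge\lambda_D(V)$, valid for every compact $K\subset D$ with nonempty interior. (Analytically $\lambda_D(V)$ is the principal Dirichlet eigenvalue of $\frac12\Delta+V$ on $D$, and the last bound uses strict positivity of its ground state on compact subsets of $D$.) I shall also use the elementary estimate that a Brownian bridge of duration $\tau$ from $z$ to $0$ stays in $D$ with probability at least $c\,e^{-\mu\tau}$, uniformly over $z$ in a fixed compact subset of $D$. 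Since $B_{0,t}$ is a Brownian bridge on $[0,t]$ pinned at the origin at both ends and $0\in D$, the left-hand side of \eqref{lim:fBB} equals $I_t:=\EE[\exp\{\int_0^t h(\frac st,B_{0,t}(s))ds\};\ B_{0,t}(s)\in D\ \forall s\in[0,t]]$.

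First I would observe that the regularity hypotheses make $h$ jointly continuous on $[0,1]\times\RR^n$ --- if $(s_k,x_k)\to(s_0,x_0)$, then $|h(s_k,x_k)-h(s_0,x_0)|\le|h(s_k,x_k)-h(s_0,x_k)|+|h(s_0,x_k)-h(s_0,x_0)|$, the first term vanishing by equicontinuity in $s$ and the second by continuity of $h(s_0,\cdot)$ --- hence uniformly continuous on $[0,1]\times\overline D$. For $N\in\NN$ partition $[0,1]$ into $I_j=[\frac{j-1}N,\frac jN]$ and put $V_j^+=\sup_{s\in I_j}h(s,\cdot)$, $V_j^-=\inf_{s\in I_j}h(s,\cdot)$; then $|V_j^{\pm}|\le K:=\|h\|_\infty$, one has $V_j^-\le h(s,\cdot)\le V_j^+$ on $\overline D$ for $s\in I_j$, and $\|V_j^+-V_j^-\|_{L^\infty(\overline D)}\le\omega(1/N)$, where $\omega$ is the $x$-uniform modulus of continuity of $h$ in $s$. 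Since $V\mapsto\lambda_D(V)$ is nondecreasing and $\lambda_D(V+c)=\lambda_D(V)+c$, the function $s\mapsto\lambda_D(h(s,\cdot))$ is continuous and $\frac1N\sum_{j=1}^{N}\lambda_D(V_j^{\pm})\to\int_0^1\lambda_D(h(s,\cdot))ds$ as $N\to\infty$; this integral is exactly the right-hand side of \eqref{lim:fBB}. It therefore suffices to sandwich $\frac1t\log I_t$ (as $t\to\infty$) between $\frac1N\sum_{j=1}^{N-1}\lambda_D(V_j^-)-\frac CN$ and $\frac1N\sum_{j=1}^{N-1}\lambda_D(V_j^+)+\frac CN$ for each fixed $N$, with $C$ depending only on $K$ and $\mu$.

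For the upper bound I would bound $h(\frac st,\cdot)\le V_j^+$ on the $j$-th time block $[\frac{(j-1)t}N,\frac{jt}N]$, discard the last block at the cost of a factor $e^{Kt/N}$ and drop the portion of the constraint for $s>\frac{(N-1)t}N$, and then apply the bridge identity \eqref{id.BBBM} with $\lambda=\frac{N-1}N$ to the resulting bounded functional of $B_{0,t}$ on $[0,\lambda t]$, discarding the weight $e^{-|B(\lambda t)|^2/(2(1-\lambda)t)}\le1$ and absorbing the subexponential factor $(1-\lambda)^{-n/2}=N^{n/2}$. What remains is $N^{n/2}e^{Kt/N}\,\EE[\exp\{\sum_{j=1}^{N-1}\int_{(j-1)t/N}^{jt/N}V_j^+(B(s))ds\};\ \tau_D>\tfrac{(N-1)t}N]$, which the Markov property bounds by $N^{n/2}e^{Kt/N}\prod_{j=1}^{N-1}\sup_{x\in D}\EE_x[\exp\{\int_0^{t/N}V_j^+(B(s))ds\};\tau_D>t/N]$. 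Taking $\frac1t\log$, letting $t\to\infty$ and invoking \eqref{plan:classical} gives $\limsup_{t\to\infty}\frac1t\log I_t\le\frac KN+\frac1N\sum_{j=1}^{N-1}\lambda_D(V_j^+)$, as wanted.

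For the lower bound, keep the whole constraint. Bound $h(\frac st,\cdot)\ge V_j^-$ and $|V_N^-|\le K$, fix a compact $K_0$ with $0\in K_0\Subset D$, and restrict to the event that $B_{0,t}$ lies in $K_0$ at each block endpoint $\frac{jt}N$, $1\le j\le N-1$. Conditioning on $B_{0,t}$ up to time $\frac{(N-1)t}N$, the terminal block (where the bridge runs from a point of $K_0$ back to $0$) contributes at least $\inf_{z\in K_0}\PP(\text{the bridge from }z\text{ to }0\text{ of duration }t/N\text{ stays in }D)\ge c\,e^{-\mu t/N}$; the remaining functional of $B_{0,t}|_{[0,(N-1)t/N]}$ is then converted by \eqref{id.BBBM}, and now $e^{-|B((N-1)t/N)|^2/(2t/N)}$ is bounded below by a positive constant on $\{B(\tfrac{(N-1)t}N)\in K_0\}$. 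The Markov property leaves the product $\prod_{j=1}^{N-1}\inf_{x\in K_0}\EE_x[\exp\{\int_0^{t/N}V_j^-(B(s))ds\};\tau_D>t/N,\ B(t/N)\in K_0]$, and the companion bound in \eqref{plan:classical} yields $\liminf_{t\to\infty}\frac1t\log I_t\ge-\frac{K+\mu}N+\frac1N\sum_{j=1}^{N-1}\lambda_D(V_j^-)$. Letting $N\to\infty$ in the two estimates proves \eqref{lim:fBB}. The hard part is precisely this last (lower-bound) step: one must pin the bridge to compact subsets of $D$ at the block endpoints so that each intermediate Feynman--Kac factor carries the full exponent $\lambda_D(V_j^-)$ (this is where positivity of the Dirichlet ground state enters), control the terminal block --- where the bridge is forced back to the origin --- by the subexponential confinement bound, and make \eqref{id.BBBM} compatible with all these restrictions; each of these is routine in isolation, but they must be threaded together with some care.
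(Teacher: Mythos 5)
Your proof is correct, but it takes a genuinely different route from the paper's. The paper proves the lemma in one stroke: it truncates the time integral at $\theta t$, applies the Girsanov identity \eqref{id.BBBM} once to replace the bridge $B_{0,t}$ by a Brownian motion (discarding the Gaussian weight, which is harmless on the bounded domain $D$), and then \emph{cites} \cite[Proposition 3.1]{MR3414457}, which already gives the large-$t$ asymptotics of the time-inhomogeneous functional $\EE[\exp\{\int_0^{\theta t}h(\tfrac st,B(s))ds\};\tau_D\ge t]$; sending $\theta\uparrow1$ costs only $(1-\theta)t\|h\|_\infty$ in the exponent. You instead rebuild that time-inhomogeneous result from scratch: you freeze $h$ on $N$ time blocks, reduce each block to the classical time-homogeneous principal Dirichlet eigenvalue asymptotics $\lambda_D(V_j^{\pm})$, chain the blocks with the Markov property, and handle the bridge pinning by forcing the path into a compact $K_0\Subset D$ at the block endpoints (so that the Gaussian weight in \eqref{id.BBBM} is harmless and the terminal block is controlled by a confinement estimate), with ground-state positivity supplying the matching lower bound for each block. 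What the paper's route buys is brevity, at the price of leaning on an external citation for the time-dependent potential; what your route buys is a self-contained argument resting only on the standard time-homogeneous Feynman--Kac eigenvalue asymptotics, and it makes explicit exactly where the bridge constraint and the continuity/equicontinuity hypotheses on $h$ are used (joint continuity gives $\|V_j^+-V_j^-\|_{L^\infty(\overline D)}\to0$ and hence convergence of the Riemann sums $\tfrac1N\sum_j\lambda_D(V_j^{\pm})$ to the right-hand side of \eqref{lim:fBB}). Both arguments are sound; yours is longer but closer to first principles.
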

	\begin{proof}
		The process $\{B_{0,t}(s)=B(s)-\frac st B(t)\}_{s\in[0,t]}$ is a Brownian bridge.
		We fix $\theta\in(0,1)$ and consider first the limit
		\begin{equation*}
			\lim_{t\to\infty}\frac1t\log \EE \lt[\exp\left\{\int_0^{\theta t} h\left(\frac st,B_{0,t}(s)\right)ds \right\};\tau_D\ge t\rt]\,.
		\end{equation*}
		Let $M$ be such that $|x|\le M$ for all $x\in D$. Using Girsanov theorem (see \cite{HLN15}*{Eq. (2.38)}), we can write
		\begin{align*}
			&\EE \lt[\exp\left\{\int_0^{\theta t} h\left(\frac st,B_{0,t}(s)\right)ds \right\};\tau_D\ge t\rt]
			\\&=(1- \theta)^{-\frac n2}\EE \lt[\exp\left\{\int_0^{\theta t} h\left(\frac st,B(s)\right)ds-\frac{|B(\theta t)|^2}{2t(1- \theta)} \right\};\tau_D\ge t\rt]
			\\&\ge(1- \theta)^{-\frac n2}\EE \lt[\exp\left\{\int_0^{\theta t} h\left(\frac st,B(s)\right)ds-\frac{M^2}{2t(1- \theta)} \right\};\tau_D\ge t\rt]\,.
		\end{align*}
		The result of \cite[Proposition 3.1] {MR3414457} asserts that
		\begin{multline*}
			\lim_{t\to\infty}\frac1t\log \EE \lt[\exp\left\{\int_0^{\theta t} h\left(\frac st,B(s)\right)ds \right\};\tau_D\ge t\rt]
			\\=\int_0^ \theta\sup_{g\in\cgg_D}\left\{\int_{D} h(s,x) g^2(x)dx-\frac12\int_{D}|\nabla g(x)|^2dx \right\}ds
		\end{multline*}
		This leads to
		\begin{multline}\label{tmp:infhg}
			\liminf_{t\to\infty}\frac1t\log \EE \lt[\exp\left\{\int_0^{\theta t} h\left(\frac st,B_{0,t}(s)\right)ds \right\};\tau_D\ge t\rt]
			\\\ge \int_0^ \theta\sup_{g\in\cgg_D}\left\{\int_{D} h(s,x) g^2(x)dx-\frac12\int_{D}|\nabla g(x)|^2dx \right\}ds\,.
		\end{multline}
		Observing that
		\begin{equation*}
			\lt|\log\EE\exp\left\{\int_0^t h\left(\frac st,B_{0,t}(s)\right)ds \right\}-\log\EE \exp\left\{\int_0^{\theta t} h\left(\frac st,B_{0,t}(s)\right)ds \right\}\rt|\le (1- \theta)t\|h\|_\infty\,,
		\end{equation*}
		we can send $\theta\uparrow1$ in \eqref{tmp:infhg} to obtain the lower bound for \eqref{lim:fBB}.
		The upper bound for \eqref{lim:fBB} is proved analogously, we omit the details.
	\end{proof}
		We conclude this section with an observation: \ref{con:scaling} induces the following scaling relation on $\cee_H(\gamma)$
		\begin{equation}\label{scal:cee}
			\cee_H(\lambda \gamma)=\lambda^{\frac{2}{2- \alpha}}\cee_H(\gamma)\quad \mbox{for all}\quad \lambda>0\,.
		\end{equation}
\section{Feynman-Kac formulas and functionals of Brownian Bridges} 
\label{sub:feynman_kac_formula}
	We derive Feynman-Kac formulas for the moments $\EE u^m(t,x)$ for integers $m\ge2$. These formulas play important roles in proving upper and lower bounds of \eqref{coj:h1} and \eqref{lim:H2ep}.
	
	To discuss our contributions in the current section, let us assume for the moment that $\dot{W}$ is a space-time white noise and $\ell=1$. The most well-known Feynman-Kac formula for second moment is
	\begin{equation*}
		\EE [(u(t,x))^2]=\EE \left(\prod_{j=1}^2 u_0(B^j(t)+x)\exp\lt\{\int_0^t \delta(B^1(s)-B^2(s))ds \rt\}\right)\,,
	\end{equation*}
	where $B^1,B^2$ are two independent Brownian motions starting at 0.
	If $u_0$ is merely a measure, some efforts are needed to make sense of $u_0(B(t)+x)$, which appears on the right-hand side above. An attempt is carried out in \cite{ChenNualart2016} using Meyer-Watanabe's theory of Wiener distributions.

	The Feynman-Kac formulas presented here (see \eqref{eqn:FKbridge} below) have appeared in \cite{HLN15}. However, there seems to have a minor gap in that article. Namely, Eq. (4.52) there has not been proven if $u_0$ is a measure. In the current article, we take the chance to fill this gap. Our approach is in the same spirit as \cite{HLN15} and is different from \cite{ChenNualart2016}. In particular, we do not make use of Wiener distributions.

	Since $W_\epsilon$ has bounded covariance, it is easy to see that the stochastic heat equation
	\begin{equation}\label{eqn:SHEpsilon}
		u_ \epsilon(t,x)=p_t*u_0(x)+\int_0^t\int_{\RR^\ell}p_{t-s}(x-y)u_ \epsilon(s,y)W_ \epsilon(ds,dy)
	\end{equation}
	has a unique random field solution $u_ \epsilon$. In addition, for each $t>0$ and $x\in\RR^\ell$, $u_ \epsilon(t,x)$ admits a chaos expansion (see, for instance \cite{HuNu09})
	\begin{equation}\label{eqn:uep:chaos}
		u_ \epsilon(t,x)=\sum_{n=0}^\infty I_{\epsilon,n}(f_n[u_0](t,x;\cdot))
	\end{equation}
	where $f_0[u_0](t,x)=p_t*u_0(x)$ and for each $n\ge1$
	\begin{multline}\label{def:fn}
		f_n[u_0](t,x;s_1,x_1,\dots,s_n,x_n)
		\\=\frac1{n!}p_{t-s_{\sigma(n)}}(x- x_{\sigma(n)})\cdots p_{s_{\sigma(2)}-s_{\sigma(1)}}(x_{\sigma(2)}-x_{\sigma(1)})p_{s_{\sigma(1)}}*u_0(x_{\sigma(1)}) \,.
	\end{multline}
	Here, $\sigma$ denotes the permutation of $\{1,2\dots,n\}$ such that $0<s_{\sigma(1)}<\cdots<s_{\sigma(n)}<t$ and $I_{\epsilon,n}$ is the $n$-th multiple It\^o-Wiener integral with respect to the Gaussian field $W_ \epsilon$. 

	\begin{proposition}\label{prop:repu}
		Let $u_0$ be a measure satisfying \eqref{con:u0}. Then
		\begin{equation}\label{rep:u}
			u(t,x)=\int_{\RR^\ell}\Z(z;t,x)u_0(dz)\,.
		\end{equation}
		In addition, if \ref{Cov:bounded} holds, then
		\begin{align}\label{eqn:FKZ}
			\frac{\Z(z; t,x)}{p_t(z-x)} 
			= \EE_{B} \exp \left\{ \int_0^t \int_{\RR^\ell} \delta \left( B_{0,t}(t-s)+ \frac{t-s}{t}z + \frac{s}{t} x -y \right) W(ds,dy) - \frac{t}{2}\gamma(0)\right\}\,.
		\end{align}
	\end{proposition}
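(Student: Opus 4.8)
The plan is to establish \eqref{rep:u} by a uniqueness argument and \eqref{eqn:FKZ} by matching Wiener chaos expansions with respect to $W$.

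\emph{Representation \eqref{rep:u}.} First I would show that $\tilde u(t,x):=\int_{\RR^\ell}\Z(z;t,x)\,u_0(dz)$ is a mild solution of \eqref{eqn:SHE} and then conclude $\tilde u=u$ from the uniqueness part of Theorem \ref{thm1}. To make sense of $\tilde u$ and the manipulations below I would fix a jointly measurable version of $(z,s,y,\omega)\mapsto\Z(z;s,y)(\omega)$ that is predictable in $(s,y)$, and record the second-moment bound $\EE|\Z(z;s,y)|^2\lesssim p_s(y-z)^2\,\Theta_s(2)$, which follows from \eqref{eqn:Z}, the Burkholder-type estimate of Lemma \ref{lem:Wm}, and Lemma \ref{lem:gronwall}. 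Since $\int_{\RR^\ell}p_s(y-z)\,|u_0|(dz)=p_s*|u_0|(y)<\infty$ by \eqref{con:u0}, Minkowski's integral inequality gives $\|\tilde u(s,y)\|_{L^2(\Omega)}\lesssim \Theta_s(2)^{1/2}\,(p_s*|u_0|)(y)$, which suffices to place $\{p_{t-s}(x-y)\tilde u(s,y)\mathbf 1_{[0,t]}(s)\}$ in $\Lambda$ and, crucially, to license a stochastic Fubini theorem interchanging the deterministic integral $\int_{\RR^\ell}(\cdot)\,u_0(dz)$ with the stochastic integral $\int_0^t\!\int_{\RR^\ell}(\cdot)\,W(ds,dy)$. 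Integrating \eqref{eqn:Z} in $z$ against $u_0$ and applying this interchange then shows that $\tilde u$ satisfies \eqref{eq:mild-formulation sigma} with initial datum $p_t*u_0$, so $\tilde u=u$.

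\emph{Feynman--Kac formula \eqref{eqn:FKZ}.} Assume now \ref{Cov:bounded}, so that $\delta_a\in\HH_0$ with $\langle\delta_a,\delta_b\rangle_{\HH_0}=\gamma(a-b)$ and $\|\delta_a\|_{\HH_0}^2=\gamma(0)$; in particular the stochastic integral appearing in \eqref{eqn:FKZ} is well defined. The geometric observation driving the argument is that, under $\EE_B$, the process
\[
s\longmapsto \xi_{z,x}(s):=B_{0,t}(t-s)+\tfrac{t-s}{t}\,z+\tfrac st\,x,\qquad s\in[0,t],
\]
is a Brownian bridge from $z$ at time $0$ to $x$ at time $t$: indeed $s\mapsto B_{0,t}(t-s)$ is again a Brownian bridge from $0$ to $0$ (time reversal), and adding the linear interpolation carrying $z$ to $x$ produces the bridge between $z$ and $x$; note $\xi_{z,x}(0)=z$ and $\xi_{z,x}(t)=x$. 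By the Markov property of this bridge, for $0<s_1<\cdots<s_n<t$,
\begin{align*}
	p_t(z-x)\,\EE_B\prod_{j=1}^n\delta\big(\xi_{z,x}(s_j)-y_j\big)
	&=p_{s_1}(y_1-z)\,p_{s_2-s_1}(y_2-y_1)\cdots p_{t-s_n}(x-y_n)\\
	&=n!\,f_n[\delta_z](t,x;s_1,y_1,\dots,s_n,y_n),
\end{align*}
the last line being the definition \eqref{def:fn} with $u_0=\delta_z$; by symmetry this extends to all orderings of the pairs $(s_j,y_j)$. Conditionally on $B$, the kernel $g_B(s,y):=\mathbf 1_{[0,t]}(s)\,\delta(\xi_{z,x}(s)-y)$ is a deterministic element of $\HH=L^2(\RR_+;\HH_0)$ with $\|g_B\|_{\HH}^2=t\,\gamma(0)$, so $X_B:=\int_0^t\!\int_{\RR^\ell}\delta(\xi_{z,x}(s)-y)\,W(ds,dy)=W(g_B)$ is conditionally Gaussian and the exponential-vector identity $e^{W(g)-\frac12\|g\|_{\HH}^2}=\sum_{n\ge0}\frac1{n!}I_n(g^{\otimes n})$ gives, conditionally on $B$,
\[
\exp\Big\{X_B-\tfrac t2\,\gamma(0)\Big\}=\sum_{n=0}^\infty\frac1{n!}\,I_n\big(g_B^{\otimes n}\big).
\]
Multiplying by $p_t(z-x)$, taking $\EE_B$, and interchanging $\EE_B$ with the series and with each $I_n$, the identity $p_t(z-x)\,\EE_B[g_B^{\otimes n}]=n!\,f_n[\delta_z](t,x;\cdot)$ established above yields
\[
p_t(z-x)\,\EE_B\exp\Big\{X_B-\tfrac t2\,\gamma(0)\Big\}=\sum_{n=0}^\infty I_n\big(f_n[\delta_z](t,x;\cdot)\big)=\Z(z;t,x),
\]
the last equality being the chaos expansion \eqref{eqn:uep:chaos} for the solution with $u_0=\delta_z$ (valid under \ref{Cov:bounded}, since $W$ then has bounded covariance). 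As $p_t(z-x)=p_t(x-z)$, this is \eqref{eqn:FKZ}.

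\emph{Main obstacle.} The bridge geometry and the heat-kernel factorization are routine; the delicate points --- and the gap left open in \cite{HLN15} when $u_0$ is a measure --- are the rigorous justifications of the two Fubini-type interchanges (the stochastic Fubini in the first part, and passing $\EE_B$ through the infinite chaos sum and the maps $I_n$ in the second part). Both reduce to the $L^2(\Omega)$-convergence of $\sum_n I_n(f_n[\delta_z](t,x;\cdot))$ for a point-mass initial datum, that is, to the quantitative estimate $\sum_{n\ge0}n!\,\|f_n[\delta_z](t,x;\cdot)\|_{\HH^{\otimes n}}^2=\EE|\Z(z;t,x)|^2\lesssim p_t(x-z)^2\,\Theta_t(2)$, which I expect to be the main point of the proof and which is obtained from \eqref{eqn:Z}, Lemma \ref{lem:Wm}, and Lemma \ref{lem:gronwall}. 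Once this bound and the attendant dominated-convergence estimates for $\EE_B$ are in place, the identification of the kernels follows immediately from the bridge computation displayed above.
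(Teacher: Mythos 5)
Your proof of \eqref{rep:u} is essentially the paper's own argument (integrate \eqref{eqn:Z} against $u_0(dz)$, interchange via the stochastic Fubini theorem, conclude by uniqueness), with the $L^2$ integrability check that licenses the interchange made explicit. For \eqref{eqn:FKZ}, however, you take a genuinely different route. The paper starts from the classical Feynman--Kac formula for $u_0\in C_c^\infty(\RR^\ell)$, splits $B$ into $B_{0,t}$ plus $\frac{s}{t}B(t)$ to obtain $u(t,x)=\int Y(z-x;t,x)\,p_t(z-x)\,u_0(z)\,dz$, and then upgrades the resulting weak identity against test functions to a pointwise one by proving continuity in $z$ of both $Y(\cdot;t,x)$ (using the $\kappa$-H\"older continuity of $\gamma$ at $0$ assumed in \ref{Cov:bounded}) and $\Z(\cdot;t,x)$ (via a forward reference to Proposition \ref{prop:hder}). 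You instead expand the conditional Wick exponential $e^{W(g_B)-\frac12\|g_B\|_{\HH}^2}$ into chaos, average over the bridge using the finite-dimensional densities of the $z\to x$ bridge to identify $p_t(z-x)\,\EE_B[g_B^{\otimes n}]$ with $n!\,f_n[\delta_z](t,x;\cdot)$, and match term by term with the chaos expansion of $\Z(z;t,x)$, which is available exactly as in \eqref{eqn:uep:chaos} because under \ref{Cov:bounded} the noise $W$ itself has bounded covariance. This is correct, and the trade-off is real: your route yields the identity directly for each fixed $(z,t,x)$, dispensing with the smooth-data Feynman--Kac input, the density argument, and the continuity-in-$z$ step (hence also with the H\"older hypothesis on $\gamma$ at that point), while the paper's route avoids chaos expansions altogether. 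One small imprecision in your closing paragraph: the interchange of $\EE_B$ with the chaos series is justified by the $B$-uniform bound $\EE_W|I_n(g_B^{\otimes n})|^2=n!\,(t\gamma(0))^n$ (so that the series converges in $L^2(\Omega_W)$ uniformly in the bridge path and Fubini applies), rather than by the convergence of $\sum_n I_n(f_n[\delta_z])$ itself; this does not affect the validity of the argument.
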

	\begin{proof} 
	Let $v(t,x)$ be the integral on the right-hand side of \eqref{rep:u}. From \eqref{eqn:Z}, integrating $z$ with respect to $u_0(dz)$ and applying the stochastic Fubini theorem (cf. \cite[Theorem 4.33]{DPZ}), we have
	\begin{align*}
		v(t,x)
		&=\int_{\RR^\ell}p_t(x-z)u_0(dz) +\int_{\RR^\ell}\int_0^t\int_{\RR^\ell}p_{t-s}(x-y)\Z(z;s,y)W(ds,dy)u_0(dz)
		\\&=p_t*u_0(x) +\int_0^t\int_{\RR^\ell}p_{t-s}(x-y)v(s,y)W(ds,dy)\,.
	\end{align*}
	Hence, $v$ is a solution of \eqref{eqn:SHE} with initial datum $u_0$. By unicity, Theorem \ref{thm1}, we see that $u=v$ and \eqref{rep:u} follows.

	Next, we show \eqref{eqn:FKZ} assuming \ref{Cov:bounded}. Fix $t>0$ and $x\in\RR^\ell$. For every $u_0\in C_c^\infty(\RR^\ell)$, the following Feynman-Kac formula holds
		\begin{equation*}
			u(t,x)=\EE_B u_0(B(t)+x)\exp\lt\{\int_0^t\int_{\RR^\ell}\delta(B(t-s)+x-y)W(ds,dy)-\frac t2 \gamma(0) \rt\}
		\end{equation*}
		Using the decomposition \eqref{def:Bbridge}	and the fact that $B_{0,t}$ and $B(t)$ are independent, we see that
		\begin{align}\label{tmp:uY}
		 	u(t,x)
		 	=\int_{\RR^\ell}Y(z;t,x) p_t(z) u_0(z+x) dz
		\end{align} 
		where
		\begin{align*}
			Y(z;t,x)&=\EE_B \exp\lt\{\int_0^t\int_{\RR^\ell}\delta(B_{0,t}(t-s)+\frac{t-s}t z+x-y)W(ds,dy)-\frac t2 \gamma(0) \rt\}\\
			& =\EE_B \exp\lt\{ V_{t,x}(z)\rt\}\,.
		\end{align*}
		Together with \eqref{rep:u} we obtain
		\begin{equation*}
			\int_{\RR^\ell}\Z(z;t,x)u_0(z)dz=\int_{\RR^\ell}Y(z-x;t,x) p_t(z-x) u_0(z) dz
		\end{equation*}
		for all $u_0\in C^\infty_c(\RR^\ell)$.

		Next we show that $z\mapsto Y(z;t,x)$ is continuous. Fix $p>2$. From the elementary relation $|e^x-e^y| \leq (e^x + e^y)|x-y|$ and the Cauchy-Schwarz inequality, it follows 
		\begin{align*}
		&\EE \left| Y(z; t,x) - Y(z'; t,x) \right|^p\\
		 &\leq \left(\EE_W \left( \EE_B\left[e^{V_{t,x}(z)}+e^{V_{t,x}(z')}\right]^2 \right)^p\right)^{1/2} \left(\EE_W \left(\EE_B|V_{t,x}(z)-V_{t,x}(z')|^2\right)^p\right)^{1/2}\,.
		\end{align*}
		Since $\gamma$ is bounded, it is easy to see that
		\begin{equation*}
			\sup_{z,x\in\RR^\ell}\EE e^{2pV_{t,x}(z)}\le C_{p,t}
		\end{equation*}
		for some constant $C_{p,t}$.
		We now resort to Minkowski inequality, our exponential bound for $V_{t,x}(z)$ and the relation between $L^p$ and $L^2$ moments for Gaussian random variables in order to obtain 
		\begin{equation*}
			\EE \left| Y(z; t,x) - Y(z'; t,x) \right|^p \leq C_{p,t} \left( \EE|V_{t,x}(z)-V_{t,x}(z')|^2 \right)^{p/2}	\,.
		\end{equation*}
		In addition, under \ref{Cov:bounded}, $\gamma$ is H\"older continuous with order $\kappa >0$ at 0, it follows that
		 \begin{align*}
		 &\EE |V_{t,x}(z)-V_{t,x}(z')|^2\\
		 &= \EE \bigg(\int_0^t\int_{\RR^\ell}\delta(B_{0,t}(t-s)+\frac{t-s}t z+x-y)W(ds,dy)\\
		 &\quad \quad - \int_0^t\int_{\RR^\ell}\delta(B_{0,t}(t-s)+\frac{t-s}t z'+x-y)W(ds,dy) \bigg)^2\\
		 &= \int_0^t \left( \gamma(0) - \gamma\Big( \frac{t-s}{t} (z-z')\Big) \right)ds \lesssim t |z-z'|^{\kappa}\,.
		 \end{align*}	
		We have shown
		\begin{equation*}
			\EE \left| Y(z; t,x) - Y(z'; t,x) \right|^p \lesssim |z-z'|^{p \kappa}	\,.
		\end{equation*}
		Thus, the process $z\rightarrow Y(z;t,x)$  has a continuous version. On the other hand, $z\mapsto\Z(z;t,x)$ is also continuous (see Proposition \ref{prop:hder} below). It follows that
		$\Z(z;t,x)=Y(z-x;t,x) p_t(z-x)$, which is exactly \eqref{eqn:FKZ}.
	\end{proof}
	\begin{proposition}
		Assuming \ref{Cov:bounded}, we have
		\begin{multline}\label{h1:ZmFK}
			\EE\left[\prod_{j=1}^m \frac{\Z(z_j;t,x_j)}{p_t(x_j-z_j)} \right]
			\\=  
			\EE\exp\left\{\int_0^t\sum_{1\le j<k\le m} \gamma\left(B_{0,t}^j(s)-B_{0,t}^k(s)+\frac st(z_j-z_k) +\frac{t-s}t(x_j-x_k)\right)ds\right\}\,.
		\end{multline}
		and
		\begin{equation}\label{h1:supZm}
			\EE\left[\prod_{j=1}^m \frac{\Z(z_j;t,x_j)}{p_t(x_j-z_j)} \right]
			\le  
			\EE\exp\left\{\int_0^t\sum_{1\le j<k\le m} \gamma\left(B_{0,t}^j(s)-B_{0,t}^k(s)\right)ds\right\}\,.
		\end{equation}
	\end{proposition}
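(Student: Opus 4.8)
The plan is to obtain \eqref{h1:ZmFK} directly from the single‑point Feynman--Kac formula \eqref{eqn:FKZ} together with the Gaussian Laplace transform, and then to deduce \eqref{h1:supZm} from \eqref{h1:ZmFK} by a comparison carried out on the Fourier side, where the hypothesis $\mu\ge 0$ is what does the work. First I would take independent Brownian motions $B^1,\dots,B^m$, independent of $W$, with associated bridges $B^j_{0,t}$, and set $u_j(s)=B^j_{0,t}(t-s)+\frac{t-s}{t}z_j+\frac{s}{t}x_j$, so that the integrand in \eqref{eqn:FKZ} for the $j$-th factor is the $\HH$-valued process $s\mapsto\delta(u_j(s)-\cdot)$, which indeed belongs to $\HH$ because $\HH_0$ contains Dirac masses under \ref{Cov:bounded}. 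Applying \eqref{eqn:FKZ} to each of the $m$ factors and using that the $B^j$ are mutually independent and independent of $W$ gives, almost surely,
\[
  \prod_{j=1}^m\frac{\Z(z_j;t,x_j)}{p_t(z_j-x_j)}
  =\EE_B\exp\Bigl\{\sum_{j=1}^m\int_0^t\!\!\int_{\RR^\ell}\delta(u_j(s)-y)\,W(ds,dy)-\frac{mt}{2}\gamma(0)\Bigr\}.
\]

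Next I would take $\EE_W$ and interchange it with $\EE_B$ (Tonelli, the integrand being nonnegative; finiteness follows from the computation below). Conditionally on the bridges, the integrand $h:=\sum_{j=1}^m\delta(u_j(\cdot)-\cdot)$ is a fixed element of $\HH$, so the inner Gaussian average equals $\exp\{\tfrac12\|h\|_{\HH}^2\}$. Using \eqref{HH0.Cartest} in the form $\langle\delta_a,\delta_b\rangle_{\HH_0}=\gamma(a-b)$, one has $\|h\|_{\HH}^2=\sum_{j,k=1}^m\int_0^t\gamma(u_j(s)-u_k(s))\,ds$; the $m$ diagonal terms contribute $mt\gamma(0)$, which cancels the deterministic term $-\frac{mt}{2}\gamma(0)$, while the off‑diagonal terms, paired up using that $\gamma$ is even, contribute $2\int_0^t\sum_{1\le j<k\le m}\gamma(u_j(s)-u_k(s))\,ds$. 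Since $u_j(s)-u_k(s)=B^j_{0,t}(t-s)-B^k_{0,t}(t-s)+\frac{t-s}{t}(z_j-z_k)+\frac{s}{t}(x_j-x_k)$, the change of variable $s\mapsto t-s$ turns the resulting expression into exactly \eqref{h1:ZmFK}.

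For \eqref{h1:supZm}, write $a_{jk}(s)=\frac{s}{t}(z_j-z_k)+\frac{t-s}{t}(x_j-x_k)$, so that the right‑hand side of \eqref{h1:ZmFK} equals $\EE_B\exp\{\int_0^t\sum_{j<k}\gamma(B^j_{0,t}(s)-B^k_{0,t}(s)+a_{jk}(s))\,ds\}$, and the claim is that dropping the shifts $a_{jk}$ only increases this quantity. As $\gamma$ is bounded under \ref{Cov:bounded}, both exponential functionals are bounded, so it suffices to compare the two exponential power series term by term, i.e. to prove
\[
  \EE_B\Bigl[\Bigl(\int_0^t\sum_{j<k}\gamma\bigl(B^j_{0,t}(s)-B^k_{0,t}(s)+a_{jk}(s)\bigr)ds\Bigr)^n\Bigr]
  \le
  \EE_B\Bigl[\Bigl(\int_0^t\sum_{j<k}\gamma\bigl(B^j_{0,t}(s)-B^k_{0,t}(s)\bigr)ds\Bigr)^n\Bigr]
\]
for every integer $n\ge0$. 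Expanding the $n$-th power and the pairwise sums, and writing each occurrence of $\gamma$ as $\gamma(v)=(2\pi)^{-\ell}\int_{\RR^\ell}e^{i\xi\cdot v}\mu(\xi)\,d\xi$ (permissible, together with Fubini in the frequency variables, since $\mu$ is integrable under \ref{Cov:bounded}), a generic term of the left‑hand side becomes
\[
  (2\pi)^{-n\ell}\int_{(\RR^\ell)^n}\EE_B\Bigl[e^{i\sum_{i}\xi_i\cdot(B^{j_i}_{0,t}(s_i)-B^{k_i}_{0,t}(s_i))}\Bigr]\,e^{i\sum_i\xi_i\cdot a_{j_ik_i}(s_i)}\prod_i\mu(\xi_i)\,d\xi_i.
\]
The bridge expectation is the characteristic function of a centered Gaussian vector, hence equals $e^{-\frac12\mathrm{Var}(\cdot)}\ge0$, while the shifts contribute only the unit‑modulus factor $e^{i\sum_i\xi_i\cdot a_{j_ik_i}(s_i)}$; consequently the modulus of each shifted term is bounded by its value at zero shift, which is itself a nonnegative number. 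Summing over the pair choices, the $n$-th Taylor coefficient on the left is bounded above by the nonnegative $n$-th coefficient on the right, and summing the series (dominated convergence, partial sums bounded by the constant $\exp\{\binom{m}{2}t\|\gamma\|_\infty\}$) yields \eqref{h1:supZm}.

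I expect the genuine difficulty to lie in this last step. Under \ref{Cov:bounded} the function $\gamma$ need not be sign‑definite, so a naive pointwise bound such as $\gamma(\cdot)\le\gamma(0)$ is useless: it would give only the trivial estimate $e^{\binom{m}{2}t\gamma(0)}$ in place of a sharp $\Theta_t(m)$-type quantity. It is precisely the non‑negativity of the spectral density $\mu$, exploited on the Fourier side, that forces the shifted pair‑interaction functional to be dominated by the unshifted one. The remaining points---passing $\EE_W$ through $\EE_B$ in the derivation of \eqref{h1:ZmFK}, and checking $\delta(u_j(\cdot)-\cdot)\in\HH$---are routine consequences of \ref{Cov:bounded}.
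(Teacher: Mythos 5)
Your proposal is correct and follows essentially the same route as the paper: \eqref{h1:ZmFK} is obtained exactly as in the text by multiplying the single-point Feynman--Kac formulas \eqref{eqn:FKZ}, conditioning on the bridges, and evaluating the Gaussian Laplace transform so that the diagonal terms cancel the normalization $-\frac{t}{2}\gamma(0)$. For \eqref{h1:supZm} the paper simply invokes Lemma 4.1 of \cite{HLN15}; your Fourier-side argument (expanding the exponential, writing each $\gamma$ through $\mu\ge 0$, and bounding the unit-modulus phase factors coming from the shifts) is precisely the proof of that lemma, so you have merely made the citation self-contained.
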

	\begin{proof}
		We observe that conditioned on $B$, 
		\begin{equation*}
		 	V(z,x):= \int_0^t\int_{\RR^\ell}\delta\left(B_{0,t}(t-s)+\frac{t-s}tz+\frac st x-y\right)W(ds,dy)
		\end{equation*} 
		is a normal random variable with mean zero. In addition, for every $x,x',z,z'\in\RR^\ell$, applying \eqref{Wep.cov}, we have
		\begin{multline}\label{tmp:covV}
			\EE\lt[V(B^j,z,x)V(B^k,z',x') \Big|B^j,B^k\rt] 
			\\= \int_0^t \gamma\lt(B_{0,t}^j(s)-B_{0,t}^k(s)+\frac st(z-z')+\frac{t-s}t(x-x') \rt)ds\,.
		\end{multline}  
		For every $(x_1,\dots,x_m)\in(\RR^\ell)^m$, using \eqref{eqn:FKZ} and \eqref{tmp:covV}, we have
		\begin{multline}\label{uep:mFK}
			\EE\left[\prod_{j=1}^m \frac{\Z(z_j;t,x_j)}{p_t(x_j-z_j)} \right]
			\\=  
			\EE\exp\left\{\int_0^t\sum_{1\le j<k\le m} \gamma\left(B_{0,t}^j(s)-B_{0,t}^k(s)+\frac st(z_j-z_k) +\frac{t-s}t(x_j-x_k)\right)ds\right\}
		\end{multline}
		Note that in the exponent above, the diagonal terms (with $j=k$) are removed because there are cancellations with the normalization factor $-\frac t2 \gamma(0)$ in \eqref{eqn:FKZ}, which occur after taking expectation with respect to $W$.
		Finally, apply \cite[Lemma 4.1] {HLN15}, we obtain \eqref{h1:supZm} from \eqref{uep:mFK}.
	\end{proof}
	To extend the previous result to nosies satisfying \ref{Cov:Dalang}, we need the following result.
	\begin{proposition}\label{prop:ZZep} Assuming \ref{Cov:Dalang}. There exists a constant $c$ depending only on $\alpha$ such that for any $\beta \in (0, 4\wedge (\ell -\alpha))$, 
		\begin{equation}
		\left\|\frac{\Z_{\epsilon}(x_0;t,x)}{p_t(x-x_0)} - \frac{\Z(x_0;t,x)}{p_t(x-x_0)} \right\|_{L^m(\Omega)}\lesssim  \epsilon^{\frac \beta4}t^{\frac{2- \alpha}4}\sqrt m \Theta_t^{\frac1m}(m)e^{cm^{\frac2{2- \alpha}}t}\quad\mbox{for all}\quad t\ge0
		\end{equation}
	where $\Theta_t(m)$ is defined in \eqref{def:Thetam}	
	\end{proposition}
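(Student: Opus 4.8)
The plan is to derive a closed mild equation for the normalised difference, treat the mollification error as an inhomogeneous source, and close the estimate with Gronwall's Lemma \ref{lem:gronwall}. Fix $t>0$ and $x\in\RR^\ell$; set $\Psi_\epsilon(s,y)=\Z_\epsilon(x_0;s,y)/p_s(y-x_0)$, $\Psi(s,y)=\Z(x_0;s,y)/p_s(y-x_0)$, and let $\rho_s(y)=p_{t-s}(x-y)p_s(y-x_0)/p_t(x-x_0)$ be the density at time $s$ of the Brownian bridge from $x_0$ to $x$ over $[0,t]$. Dividing \eqref{eqn:SHEpsilon} and \eqref{eqn:Z} (with $u_0=\delta(\cdot-x_0)$) by $p_t(x-x_0)$ and using $p_{t-s}(x-y)p_s(y-x_0)=p_t(x-x_0)\rho_s(y)$, we get $\Psi_\epsilon(t,x)=1+\int_0^t\int_{\RR^\ell}\rho_s(y)\Psi_\epsilon(s,y)W_\epsilon(ds,dy)$ and likewise for $\Psi$ with $W$. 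Since $W_\epsilon(\phi)=W(p_\epsilon*\phi)$, the first integral equals $\int_0^t\int_{\RR^\ell}P_\epsilon[\rho_s\Psi_\epsilon(s,\cdot)](y)W(ds,dy)$, with $P_\epsilon g=p_\epsilon*g$ in space; subtracting and writing $P_\epsilon[\rho_s\Psi_\epsilon]-\rho_s\Psi=P_\epsilon[\rho_s(\Psi_\epsilon-\Psi)]+(P_\epsilon-\mathrm{Id})[\rho_s\Psi]$, the difference $D(s,y):=\Psi_\epsilon(s,y)-\Psi(s,y)$ solves
\[
D(t,x)=\int_0^t\!\int_{\RR^\ell}P_\epsilon\big[\rho_sD(s,\cdot)\big](y)\,W(ds,dy)+R_\epsilon(t,x),\qquad R_\epsilon(t,x):=\int_0^t\!\int_{\RR^\ell}(P_\epsilon-\mathrm{Id})\big[\rho_s\Psi(s,\cdot)\big](y)\,W(ds,dy).
\]
As in Proposition \ref{prop:repu}, these identities are legitimised by running the argument on Picard iterates (or for $u_0\in C_c^\infty$), using the stochastic Fubini theorem and the continuity statement of Proposition \ref{prop:hder}.

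For the homogeneous part, $P_\epsilon$ is a contraction on $\HH_0$ and $\gamma\ge 0$ under \ref{Cov:Dalang}, so Burkholder's inequality \eqref{ineq:Burkholder}, Minkowski's inequality and Cauchy--Schwarz give, writing $G(s):=\sup_y\|D(s,y)\|_{L^m(\Omega)}$,
\[
\Big\|\int_0^t\!\int P_\epsilon[\rho_sD(s,\cdot)]\,W(ds,dy)\Big\|_{L^m}^2\le 4m\int_0^t G(s)^2\Big(\iint_{\RR^{2\ell}}\rho_s(y)\rho_s(\bar y)\gamma(y-\bar y)\,dy\,d\bar y\Big)ds.
\]
Two independent bridges with the same endpoints differ in law by $B^1_{0,t}-B^2_{0,t}$, so the inner integral is $\EE[\gamma(B^1_{0,t}(s)-B^2_{0,t}(s))]$, which by \ref{con:scaling} and Gaussian scaling is $\lesssim(s(t-s)/t)^{-\alpha/2}$. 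Taking $\sup_x$ and inserting the source bound below, one arrives at an inequality of exactly the form required by Lemma \ref{lem:gronwall},
\[
G(t)^2\lesssim m\int_0^t\Big(\tfrac{s(t-s)}{t}\Big)^{-\alpha/2}G(s)^2\,ds+\sup_{\tau\le t}\sup_x\|R_\epsilon(\tau,x)\|_{L^m}^2,
\]
so that $G(t)\lesssim\big(\sup_{\tau\le t}\sup_x\|R_\epsilon(\tau,x)\|_{L^m}\big)\,e^{cm^{2/(2-\alpha)}t}$; the power $m^{2/(2-\alpha)}$ comes out of Lemma \ref{lem:gronwall} applied with $A\sim m$, the square being forced by the $L^{m/2}$-norm of a square in Burkholder's inequality.

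It remains to bound $R_\epsilon$. Since $\mathcal F[(P_\epsilon-\mathrm{Id})g](\xi)=(e^{-\epsilon|\xi|^2/2}-1)\mathcal Fg(\xi)$ and $|e^{-\epsilon|\xi|^2/2}-1|^2\le\min(1,\tfrac14\epsilon^2|\xi|^4)\lesssim\epsilon^{\beta/2}|\xi|^\beta$ for $0<\beta\le4$, formula \eqref{HH0} yields
\begin{multline*}
\big\|(P_\epsilon-\mathrm{Id})[\rho_s\Psi(s,\cdot)]\big\|_{\HH_0}^2\lesssim\epsilon^{\beta/2}(2\pi)^{-\ell}\int_{\RR^\ell}|\xi|^\beta\mu(\xi)\,\big|\mathcal F[\rho_s\Psi(s,\cdot)](\xi)\big|^2d\xi\\
=\epsilon^{\beta/2}\iint_{\RR^{2\ell}}\rho_s(y)\rho_s(\bar y)\Psi(s,y)\Psi(s,\bar y)\,\widetilde\gamma(y-\bar y)\,dy\,d\bar y,
\end{multline*}
where $\widetilde\gamma:=\mathcal F^{-1}[\,|\xi|^\beta\mu\,]$. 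The crucial observation is that $|\xi|^\beta\mu(\xi)$ is homogeneous of degree $\beta+\alpha-\ell$, so precisely when $\beta<\ell-\alpha$ its inverse Fourier transform $\widetilde\gamma$ is a genuine locally integrable function, homogeneous of degree $-(\beta+\alpha)$ with $|\widetilde\gamma(z)|\lesssim|z|^{-(\beta+\alpha)}$; this is what keeps $\widetilde\gamma$ an admissible kernel. Minkowski's inequality, Cauchy--Schwarz, the bridge identity and the moment bound $\sup_{s\le t,\,y}\|\Psi(s,y)\|_{L^m(\Omega)}\le\Theta_t^{1/m}(m)e^{cm^{2/(2-\alpha)}t}$ (valid for $\Z$ under \ref{Cov:Dalang}; cf. the Picard-iteration moment estimates of \cite{HLN15} and \eqref{h1:supZm} for $W_\epsilon$) then give
\[
\big\|\,\big\|(P_\epsilon-\mathrm{Id})[\rho_s\Psi(s,\cdot)]\big\|_{\HH_0}^2\big\|_{L^{m/2}}\lesssim\epsilon^{\beta/2}\Theta_t^{2/m}(m)e^{2cm^{2/(2-\alpha)}t}\,\EE\big[|\widetilde\gamma|(B^1_{0,t}(s)-B^2_{0,t}(s))\big],
\]
and $\EE[|\widetilde\gamma|(B^1_{0,t}(s)-B^2_{0,t}(s))]\lesssim(s(t-s)/t)^{-(\beta+\alpha)/2}$ by homogeneity. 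Burkholder's inequality together with an elementary bound on $\int_0^t(s(t-s)/t)^{-(\beta+\alpha)/2}ds$ --- splitting off neighbourhoods of $s=0$ and $s=t$, where one instead uses the cruder estimate with the integrable weight $(s(t-s)/t)^{-\alpha/2}$ (without the $\epsilon$-gain) --- then give $\sup_x\|R_\epsilon(t,x)\|_{L^m}\lesssim\epsilon^{\beta/4}t^{(2-\alpha)/4}\sqrt m\,\Theta_t^{1/m}(m)e^{cm^{2/(2-\alpha)}t}$. Feeding this into the Gronwall bound for $G(t)$ completes the argument.

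The heart of the matter is this source estimate: the mollification error $P_\epsilon-\mathrm{Id}$ can be traded for the gain $\epsilon^{\beta/4}$ only at the cost of replacing the spatial correlation $\gamma$ (a Riesz-type kernel of order $\alpha$) by the more singular $\widetilde\gamma$ (of order $\beta+\alpha$), and the requirement that $\widetilde\gamma$ remain admissible --- equivalently, that the Brownian-bridge functional $\EE[|\widetilde\gamma|(B^1_{0,t}(s)-B^2_{0,t}(s))]$ be finite --- is exactly the hypothesis $\beta<\ell-\alpha$ (while $\beta<4$ enters only through the Fourier bound on $|e^{-\epsilon|\xi|^2/2}-1|$). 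The rest is bookkeeping: the behaviour of the time integral near its endpoints, where the bridge degenerates and the $\epsilon$-gain is lost, and the justification of the mild identities for Dirac initial data via Picard iterates, exactly as in Proposition \ref{prop:repu}.
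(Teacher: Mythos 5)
Your argument is correct and is, at its core, the same proof as the paper's: write the mild equation for the normalised difference, isolate a homogeneous part that closes under Lemma \ref{lem:gronwall} with the weight $(s(t-s)/t)^{-\alpha/2}$, and estimate the mollification error in Fourier mode via $|1-e^{-\epsilon|\xi|^2/2}|\lesssim\epsilon^{\beta/4}|\xi|^{\beta/2}$. The one structural difference is where the mollification sits. The paper splits the difference as a stochastic integral of $\Z-\Z_\epsilon$ against $W$ plus a stochastic integral of $\Z_\epsilon$ against the noise $W-W_\epsilon$ (whose spectral density carries the factor $(1-e^{-\epsilon|\xi|^2/2})^2$), so its source term involves the \emph{mollified} solution, whose moments are already controlled by \eqref{h1:supZm}. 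You instead transfer $p_\epsilon$ onto the integrand and take $(P_\epsilon-\mathrm{Id})[\rho_s\Psi]$ against $W$ as the source, which involves the \emph{unmollified} $\Z$. That creates a logical-ordering issue you must handle explicitly: the bound $\sup_{s\le t,y}\|\Psi(s,y)\|_{L^m}\le\Theta_t^{1/m}(m)$ of \eqref{ineq:T1} is, in the paper, itself a consequence of Proposition \ref{prop:ZZep} (via letting $\epsilon\downarrow0$ in \eqref{h1:supZm}), so it cannot be invoked here; you need the cruder a priori bound $\sup_y\|\Psi(s,y)\|_{L^m}\lesssim e^{cm^{2/(2-\alpha)}s}$ obtained directly from \eqref{tmp610} by Lemmas \ref{lem:Wm} and \ref{lem:gronwall}, which you gesture at and which suffices since the extra exponential is absorbed into the final estimate. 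On the positive side, your identification of $\beta<\ell-\alpha$ as exactly the condition making $\widetilde\gamma=\cff^{-1}[|\xi|^\beta\mu]$ a locally integrable kernel homogeneous of degree $-(\alpha+\beta)$ is the right explanation of that hypothesis, and your endpoint splitting of $\int_0^t(s(t-s)/t)^{-(\alpha+\beta)/2}\,ds$ is more careful than the paper's, which tacitly needs $\alpha+\beta<2$ for that integral to converge (at the price, as you note, of a possibly weaker $\epsilon$-rate when $\beta\ge2-\alpha$, which is harmless for the proposition's use).
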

	\begin{proof}
		Let us put
		\begin{equation*}
			M_s=\sup_{y\in\RR^\ell}\frac{\|\Z(x_0;s,y)-\Z_ \epsilon(x_0;s,y)\|_{L^m(\Omega)}}{p_t(y-x_0)}\,.
		\end{equation*}
		From \eqref{eqn:Z}, we have 
		\begin{align*}
		\frac{\Z(x_0;t,x)}{p_t(x-x_0)} &= 1 + \int_0^t \int_{\RR^\ell} \frac{p_{t-s}(x-y) p_s(y-x_0)}{p_t(x-x_0)} \frac{\Z(x_0;s,y)}{p_s(y-x_0)} W(ds,dy)\\
		&= 1+ \int_0^t \int_{\RR^\ell} p_{\frac{s(t-s)}{t}} (y-x_0 - \frac{s}{t} (x-x_0)) \frac{\Z(x_0;s,y)}{p_s(y-x_0)} W(ds,dy)\,,
		\end{align*}
		Then we obtain
		\begin{align*}
		&\left\|\frac{\Z(x_0;t,x)}{p_t(x-x_0)} - \frac{\Z_{\epsilon}(x_0;t,x)}{ p_t(x-x_0)} \right\|_{L^m(\Omega)} \\
		\leq& \left\| \int_0^t \int_{\RR^\ell} p_{\frac{s(t-s)}{t}} (y-x_0-\frac{s}{t} (x-x_0)) \frac{\Z(x_0;s,y) - \Z_{\epsilon}(x_0;s,y)}{p_s(y-x_0)} W(ds,dy) \right\|_{L^m(\Omega)}\\
		&+ \left\| \int_0^t \int_{\RR^\ell} p_{\frac{s(t-s)}{t}} (y-x_0-\frac{s}{t} (x-x_0)) \frac{\Z_{\epsilon}(x_0;s,y)}{p_s(y-x_0)} \left[W(ds,dy) - W_{\epsilon}(ds,dy)\right]  \right\|_{L^m(\Omega)}\\
		:=& I_1 + I_2\,.
		\end{align*}
		To estimate $I_1$, we use Lemma \ref{lem:Wm} to obtain
		\begin{align*}
		I_1 
		& \lesssim \sqrt{m} \left( \int_0^t \int_{\RR^\ell} e^{-\frac{2s(t-s)}{t} |\xi|^2} \mu(\xi)d\xi M^2_s ds \right)^{1/2}\\
		&\lesssim \sqrt{m} \left( \int_0^t \left(\frac{s(t-s)}{t}\right)^{-\frac{\alpha}{2}} M^2_s ds \right)^{1/2}\,.
		\end{align*}
		To estimate $I_2$, we first note that the noise $W-W_ \epsilon$ has spectral density $(1- e^{-\epsilon|\xi|^2})^2\mu(\xi)$. Applying Lemma \ref{lem:Wm}, we obtain
		\begin{align*}
			I_2\lesssim\sqrt{m} \sup_{s\le t,y\in\RR^\ell}\lt\|\frac{\Z_\epsilon(x_0;s,y)}{p_s(y-x_0)} \rt\|_{L^m(\Omega)} \left( \int_0^t \int_{\RR^\ell} e^{-\frac{2s(t-s)}{t} |\xi|^2}(1-e^{-\epsilon|\xi|^2})^2 \mu(\xi)d\xi  ds \right)^{1/2}\,.
		\end{align*}
		Let us fix $\beta\in(0,4\wedge(\ell- \alpha))$. Applying the elementary inequality $1-e^{-\epsilon|\xi|^2}\le \epsilon^{\beta/4}|\xi|^{\beta/2}$ together with the estimate
		\begin{equation*}
			\int_0^t\int_{\RR^\ell} e^{-\frac{2s(t-s)}{t} |\xi|^2}|\xi|^{\beta} \mu(\xi)d\xi ds\lesssim \int_0^t\lt(\frac{s(t-s)}t \rt)^{-\frac{\alpha+\beta}2}ds\lesssim t^{\frac{2- \alpha- \beta}2}\,,
		\end{equation*}
		we get
		\begin{align*}
			I_2\lesssim \epsilon^{\frac \beta 4} t^{\frac{2- \alpha-\beta}4}\sqrt m \sup_{s\le t,y\in\RR^\ell}\lt\|\frac{\Z_\epsilon(x_0;s,y)}{p_s(y-x_0)} \rt\|_{L^m(\Omega)}\,.
		\end{align*}
		Reasoning as in \cite[Lemma 4.1]{HLN15}, we see that
		\begin{multline*}
			\EE_B\exp\lt\{\sum_{1\le j<k\le m}\int_0^t \gamma_\epsilon(B^j_{0,t}(s)-B^k_{0,t}(s))ds \rt\}
			\\\le \EE_B\exp\lt\{\sum_{1\le j<k\le m}\int_0^t \gamma(B^j_{0,t}(s)-B^k_{0,t}(s))ds \rt\}\,.
		\end{multline*}
		Two key observations here are $\gamma_ \epsilon,\gamma$ have spectral measures $\mu(\xi),e^{-\epsilon|\xi|^2}\mu(\xi)$ respectively and $e^{-\epsilon|\xi|^2}\mu(\xi)\le \mu(\xi)$. Hence, it follows from \eqref{h1:supZm} and the previous estimate that
		\begin{equation*}
			\sup_{s\le t,y\in\RR^\ell}\lt\|\frac{\Z_\epsilon(x_0;s,y)}{p_s(y-x_0)} \rt\|_{L^m(\Omega)}\le 
			\Theta_t^{\frac1m}(m)\,.
		\end{equation*}
		In summary, we have shown
		\begin{equation*}
			M_t\lesssim \sqrt m\lt(\int_0^t\lt(\frac{s(t-s)}t \rt)^{-\frac \alpha2}M_s^2 ds \rt)^{\frac12}+\epsilon^{\frac \beta4}t^{\frac{2- \alpha -\beta}4}\sqrt m \Theta_t^{\frac1m}(m)\,.
		\end{equation*}
		Applying Lemma \ref{lem:gronwall}, this yields
		\begin{equation}
			M_t\lesssim \epsilon^{\frac \beta4}t^{\frac{2- \alpha-\beta}4}\sqrt m \Theta_t^{\frac1m}(m)e^{cm^{\frac2{2- \alpha}}t}\quad\mbox{for all}\quad t\ge0\,,
		\end{equation}
		for some constant $c$ depending only on $\alpha$.
	\end{proof}
	
	We are now ready to derive Feynman-Kac formulas for positive moments. 
	\begin{proposition}\label{prop:upB} Let $u_0$ be a measure satisfying \eqref{con:u0}. Under \ref{Cov:bounded} or \ref{Cov:Dalang}, for every $x_1,\dots,x_m\in\RR^\ell$, we have
		\begin{align}\label{eqn:FKbridge}
			\EE\left[\prod_{j=1}^m u(t,x_j)\right]
			&=\int_{(\RR^\ell)^m}   \notag 
			\EE\exp\left\{\int_0^t\sum_{1\le j<k\le m} \gamma\left(B_{0,t}^j(s)-B_{0,t}^k(s)+x_j-x_k+\frac st(y_j-y_k)\right)ds\right\}  \\
			&\quad  \times \prod_{j=1}^m [p_t(y_j)u_{0}(x_j+dy_j)]\,.
		\end{align}
	and
		\begin{equation}\label{ineq:upB}
		\EE \lt[\prod_{j=1}^m \frac{u(t,x_j)}{p_t*|u_0|(x_j)} \rt]\le \EE\exp\left\{\int_0^t\sum_{1\le j<k\le m} \gamma\left(B_{0,t}^j(s)-B_{0,t}^k(s)\right)ds\right\}\,.
	\end{equation}
	\end{proposition}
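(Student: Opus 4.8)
The plan is to prove both identities first under hypothesis \ref{Cov:bounded}, by inserting the representation \eqref{rep:u} into the moment formula \eqref{h1:ZmFK}, and then to reach the case \ref{Cov:Dalang} by approximating $W$ with the regularized noise $W_\epsilon$ and letting $\epsilon\downarrow0$.

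\emph{Case \ref{Cov:bounded}.} Starting from $\prod_{j=1}^m u(t,x_j)=\prod_{j=1}^m\int_{\RR^\ell}\Z(z_j;t,x_j)\,u_0(dz_j)$, note that $\Z\ge0$ is visible from \eqref{eqn:FKZ} (which displays $\Z(z;t,x)/p_t(z-x)$ as the expectation of a positive variable), so \eqref{h1:supZm} yields $\EE\prod_j\Z(z_j;t,x_j)\le\Theta_t(m)\prod_j p_t(x_j-z_j)$. Since $\int_{\RR^\ell}p_t(x_j-z)|u_0|(dz)=p_t*|u_0|(x_j)<\infty$ by \eqref{con:u0}, this bound is integrable against $\prod_j|u_0|(dz_j)$, so Fubini applies; substituting \eqref{h1:ZmFK} and then changing variables $z_j=x_j+y_j$ (using that $p_t$ is even and $\frac st(z_j-z_k)+\frac{t-s}t(x_j-x_k)=(x_j-x_k)+\frac st(y_j-y_k)$) gives \eqref{eqn:FKbridge}. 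For \eqref{ineq:upB} I would instead write $|u(t,x_j)|\le\int_{\RR^\ell}\Z(z;t,x_j)|u_0|(dz)$, apply \eqref{h1:supZm}, and divide by $\prod_j p_t*|u_0|(x_j)$.

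\emph{Case \ref{Cov:Dalang}.} The covariance $\gamma_\epsilon$ of $W_\epsilon$ satisfies \ref{Cov:bounded}, so by the previous step \eqref{eqn:FKbridge} and \eqref{ineq:upB} hold for $u_\epsilon$ with $\gamma$ replaced by $\gamma_\epsilon$, and the task is to pass to the limit $\epsilon\downarrow0$. On the probabilistic side, Proposition \ref{prop:ZZep} bounds $\|\Z_\epsilon(z;t,x_j)-\Z(z;t,x_j)\|_{L^m(\Omega)}$ by a constant times $p_t(x_j-z)\,\epsilon^{\beta/4}$; integrating this against $|u_0|(dz)$ — using the representation \eqref{rep:u} for $u_\epsilon$, proved exactly as in Proposition \ref{prop:repu} — gives $\|u_\epsilon(t,x_j)-u(t,x_j)\|_{L^m(\Omega)}\lesssim\epsilon^{\beta/4}\,p_t*|u_0|(x_j)\to0$, hence $\prod_j u_\epsilon(t,x_j)\to\prod_j u(t,x_j)$ in $L^1(\Omega)$ by the generalized H\"older inequality, so both left-hand sides converge. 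On the analytic side, the inner Brownian-bridge expectation in \eqref{eqn:FKbridge} written with $\gamma_\epsilon$ is dominated, uniformly in $\epsilon\in(0,1)$ and in the $y$-variables, by $\Theta_t(m)$: by \cite[Lemma 4.1]{HLN15} the spatial shifts can be dropped, and since $\gamma_\epsilon$ and $\gamma$ have spectral densities $e^{-2\epsilon|\xi|^2}\mu(\xi)\le\mu(\xi)$, the unshifted $\gamma_\epsilon$-functional is at most the unshifted $\gamma$-functional (the monotonicity already used in Proposition \ref{prop:ZZep}). It remains to check pointwise convergence in $(y_1,\dots,y_m)$ of this expectation to its $\gamma$-version, which after dominated convergence in $\omega$ reduces to the a.s.\ convergence $\int_0^t\gamma_\epsilon(a(s))\,ds\to\int_0^t\gamma(a(s))\,ds$ along Brownian paths $a$ — standard because $\gamma_\epsilon$ is a Gaussian mollification of the non-negative locally integrable $\gamma$, and, when $\gamma=\delta_0$, is the a.s.\ convergence of mollified local times. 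Since $\Theta_t(m)$ is integrable against $\prod_j p_t(y_j)|u_0|(x_j+dy_j)$, dominated convergence upgrades the $u_\epsilon$-version of \eqref{eqn:FKbridge} to \eqref{eqn:FKbridge}; letting $\epsilon\downarrow0$ in the $u_\epsilon$-version of \eqref{ineq:upB}, whose right side is at most the $\gamma$-version for every $\epsilon$, gives \eqref{ineq:upB}.

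\emph{Main obstacle.} I expect the delicate part to be the interchange of $\epsilon\downarrow0$ with the $u_0$-integrals on both sides: the quantitative $L^m$-control of $u_\epsilon-u$ is exactly what Proposition \ref{prop:ZZep} supplies, while the $\epsilon$-uniform domination of the exponential bridge functionals — built from \cite[Lemma 4.1]{HLN15} together with $e^{-2\epsilon|\xi|^2}\mu\le\mu$ — is what makes dominated convergence applicable; the singular case $\gamma=\delta_0$ additionally requires the classical a.s.\ convergence of mollified self-intersection local times.
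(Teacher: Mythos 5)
Your proposal is correct and follows essentially the same route as the paper: start from the mollified-noise identities \eqref{h1:ZmFK}--\eqref{h1:supZm}, pass to the limit $\epsilon\downarrow0$ using Proposition \ref{prop:ZZep} on the probabilistic side and the convergence of the Brownian-bridge exponential functionals (which the paper handles by invoking the argument of \cite[Proposition 4.2]{HLN15}) on the analytic side, and then integrate against $u_0$ via \eqref{rep:u} and Fubini. The only cosmetic difference is the order of operations — the paper takes $\epsilon\downarrow0$ at the Dirac-mass level first and integrates $u_0$ afterwards, whereas you integrate $u_0$ at each fixed $\epsilon$ and then pass to the limit — and your ``dominated convergence in $\omega$'' should more precisely be uniform integrability of the exponentials (uniform $L^p$ bounds for some $p>1$), since $\gamma_\epsilon$ is not pointwise dominated by $\gamma$.
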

	\begin{proof} We prove the result under the hypothesis \ref{Cov:Dalang}. The proof under hypothesis \ref{Cov:bounded} is easier and omitted. 

		\textbf{Step 1:} we first consider \eqref{eqn:FKbridge} and \eqref{ineq:upB} when the initial data are Dirac masses. More precisely, we will show that
		\begin{multline}\label{tmp:ZmFK}
			\EE\left[\prod_{j=1}^m \frac{\Z(z_j;t,x_j)}{p_t(x_j-z_j)} \right]
			\\=  
			\EE\exp\left\{\int_0^t\sum_{1\le j<k\le m} \gamma\left(B_{0,t}^j(s)-B_{0,t}^k(s)+\frac st(z_j-z_k) +\frac{t-s}t(x_j-x_k)\right)ds\right\}\,.
		\end{multline}
		and
		\begin{equation}\label{tmp:supZm}
			\EE\left[\prod_{j=1}^m \frac{\Z(z_j;t,x_j)}{p_t(x_j-z_j)} \right]
			\le  
			\EE\exp\left\{\int_0^t\sum_{1\le j<k\le m} \gamma\left(B_{0,t}^j(s)-B_{0,t}^k(s)\right)ds\right\}\,.
		\end{equation}
		Fix $\epsilon>0$, \eqref{tmp:ZmFK} with $\Z,\gamma$ replaced by $\Z_\epsilon,\gamma_\epsilon$ has been obtained in \eqref{h1:ZmFK}. Namely, we have
		\begin{multline}\label{tmp.uep:mFK}
			\EE\left[\prod_{j=1}^m \frac{\Z_ \epsilon(z_j;t,x_j)}{p_t(x_j-z_j)} \right]
			\\=  
			\EE\exp\left\{\int_0^t\sum_{1\le j<k\le m} \gamma_ \epsilon\left(B_{0,t}^j(s)-B_{0,t}^k(s)+\frac st(z_j-z_k) +\frac{t-s}t(x_j-x_k)\right)ds\right\}
		\end{multline}
		Using analogous arguments with \cite[Proposition 4.2]{HLN15}, we can show that for every $\kappa\in\RR$, as $\epsilon\downarrow0$, the functions
		\begin{multline*}
			(x_1,z_1,\dots,x_m,z_m)\mapsto
			\\\EE\exp\lt\{\kappa\int_0^t \sum_{1\le j<k\le m}\gamma_ \epsilon\left(B_{0,t}^j(s)-B_{0,t}^k(s)+\frac st(z_j-z_k) +\frac{t-s}t(x_j-x_k)\right)ds \rt\}
		\end{multline*}
		converges uniformly on $\RR^{2m\ell}$ to the function
		\begin{multline*}
			(x_1,z_1,\dots,x_m,z_m)\mapsto
			\\\EE\exp\lt\{\kappa\int_0^t \sum_{1\le j<k\le m}\gamma\left(B_{0,t}^j(s)-B_{0,t}^k(s)+\frac st(z_j-z_k) +\frac{t-s}t(x_j-x_k)\right)ds \rt\}\,.
		\end{multline*}
		In addition, in view Proposition \ref{prop:ZZep}, 
		\begin{equation*}
			\lim_{\epsilon\downarrow0}\EE\left[\prod_{j=1}^m \frac{\Z_ \epsilon(z_j;t,x_j)}{p_t(x_j-z_j)} \right]=\EE\left[\prod_{j=1}^m \frac{\Z(z_j;t,x_j)}{p_t(x_j-z_j)} \right]\,.
		\end{equation*}
		Sending $\epsilon\downarrow0$ in \eqref{tmp.uep:mFK}, we obtain \eqref{tmp:ZmFK}. \eqref{tmp:supZm} is obtained analogously using \eqref{h1:supZm}. We omit the details.

		\textbf{Step 2:} For general initial data satisfying \eqref{con:u0}, we note that from \eqref{rep:u}, 
		\begin{equation*}
			\prod_{j=1}^m u(t,x_j)=\int_{(\RR^\ell)^m}  \prod_{j=1}^m [\Z(z_j;t,x_j) u_{0}(dz_j)]\,.
		\end{equation*}
		From here, it is evident that \eqref{eqn:FKbridge}, \eqref{ineq:upB} are consequences of \eqref{tmp:ZmFK}, \eqref{tmp:supZm} and Fubini's theorem. 
	\end{proof}
	We conclude this section with the following observation.
	\begin{remark}\label{rmk:nonnegative}
		Under \ref{Cov:bounded}, it is evident from \eqref{eqn:FKZ} that $\Z(z;t,x) $ is non-negative for every $z,t,x$. Under \ref{Cov:Dalang}, thanks to Proposition \ref{prop:ZZep}, $\Z(z;t,x)$ is the limit of non-negative random variables, hence $\Z(z;t,x) $ is also non-negative for every $z,t,x$. Furthermore, in view of \eqref{rep:u}, if $u_0$ is non-negative then $u(t,x)$ is non-negative for every $t,x$.
	\end{remark}

\section{Moment asymptotic and regularity} 
\label{sec:moment_asymptotic_and_regularity}
\subsection*{Moment asymptotic} 
	We begin with a study on high moments. Under hypothesis \ref{Cov:bounded}, the high moment asymptotic is governed by the value of $\gamma$ at the origin.
	\begin{proposition}\label{prop:H1m}
		Under \ref{Cov:bounded}, for every $T>0$, we have 
		\begin{equation}\label{lim:H1moment}
		 \limsup_{m \to \infty} m^{-2} \log\sup_{0<t\le T} \sup_{x\in \RR}  \EE \left(\frac{\Z(x_0;t,x)}{p_t(x-x_0)}\right)^m 
		 \le  \frac{T}{2}\gamma(0)\,.
		\end{equation}
	\end{proposition}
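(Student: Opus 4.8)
The plan is to use the Feynman–Kac formula \eqref{tmp:supZm} (equivalently \eqref{h1:supZm}) for the moments of $\Z(x_0;t,x)/p_t(x-x_0)$ together with the boundedness of $\gamma$. Since $\gamma$ is bounded by $\gamma(0)$ under \ref{Cov:bounded}, I would start from
\begin{equation*}
	\EE\left(\frac{\Z(x_0;t,x)}{p_t(x-x_0)}\right)^m \le \EE\exp\left\{\int_0^t\sum_{1\le j<k\le m}\gamma\big(B_{0,t}^j(s)-B_{0,t}^k(s)\big)ds\right\}\le \exp\left\{t\binom m2\gamma(0)\right\}\,,
\end{equation*}
using that the exponent has $\binom m2$ summands each bounded by $\gamma(0)$, and that this bound is deterministic and uniform in $x$. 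This already gives the right-hand side after optimizing in $t\le T$.

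The key steps, in order, are: (1) recall \eqref{tmp:supZm}, which is valid under \ref{Cov:bounded} (indeed it is \eqref{h1:supZm}); (2) bound each $\gamma(B_{0,t}^j(s)-B_{0,t}^k(s))$ pointwise by $\gamma(0)$, which is legitimate since under \ref{Cov:bounded} the function $\gamma$ is the inverse Fourier transform of a non-negative integrable $\mu$, hence non-negative definite and maximized at the origin; (3) conclude that $\EE(\Z(x_0;t,x)/p_t(x-x_0))^m\le \exp\{t\binom m2\gamma(0)\}$ for all $t$ and $x$; (4) take $\sup_{0<t\le T}$ and $\sup_{x}$ and then $m^{-2}\log$, noting $\binom m2\le m^2/2$, to obtain $m^{-2}\log\sup_{0<t\le T}\sup_x\EE(\cdots)^m\le \frac{T}{2}\gamma(0)$; (5) pass to the $\limsup$ in $m$. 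No scaling or variational input is needed for this upper bound.

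Honestly, there is no serious obstacle here: the only point requiring a word of care is the pointwise bound $\gamma(\cdot)\le\gamma(0)$, which follows from non-negative definiteness of $\gamma$ under \ref{Cov:bounded} and is already invoked in the discussion following \eqref{eqn:cee} (where one bounds $\cee_H(\gamma)\le\gamma(0)$) and in the proof of Lemma \ref{lem:Wm}. Everything else is a deterministic estimate on a finite sum and an elementary asymptotic. I would therefore present the argument as a short computation, remarking only that the bound is uniform in $x$ and $t$, which is precisely why the $\sup$'s can be pulled inside harmlessly.
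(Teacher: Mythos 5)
Your argument is correct and coincides with the paper's own proof: both invoke the moment bound \eqref{ineq:upB} (equivalently \eqref{h1:supZm}) and the pointwise estimate $\gamma(x)\le\gamma(0)$ from positive definiteness to get $\EE\bigl(\Z(x_0;t,x)/p_t(x-x_0)\bigr)^m\le \exp\bigl\{\tfrac{m(m-1)}{2}t\gamma(0)\bigr\}$, from which \eqref{lim:H1moment} is immediate. No issues.
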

	\begin{proof}
	  Since $\gamma$ is positive definite, $\gamma(x)\leq \gamma(0)$ for all $x \in \RR^{\ell}$. It follows from \eqref{ineq:upB} that 
	  \begin{eqnarray*}
	  \EE \left( \frac{\Z(x_0;t,x)}{p_t(x-x_0)}\right)^m \leq \exp \left( \frac{m(m-1)}{2} t \gamma(0) \right) \,.
	  \end{eqnarray*}
	  This immediately yields \eqref{lim:H1moment}.
	\end{proof}
	The following result is needed to obtain moment asymptotic under \ref{Cov:Dalang}.
	\begin{lemma}\label{lem:limtaum} Suppose that $\mu(\RR^\ell)<\infty$. For each $t,T,m$ we put $t_m = m^{\frac{2}{2-\alpha}}t$ and $T_m = m^{\frac{2}{2-\alpha}}T$. Then
		\begin{equation}\label{eqn:mmtau}
	\limsup_{m \to \infty} \frac{1}{mT_m} \log \sup_{0 \leq t_m \leq T_m} \EE \exp \left( \frac{1}{m} \sum_{1 \leq j < k \leq m} \int_0^{t_m} \gamma\left( B_{0, t_m}^j(s)-B_{0, t_m}^k(s) \right) ds \right) \leq \frac{1}{2} \mathcal{E}_H(\gamma)\,.		
		\end{equation} 
	\end{lemma}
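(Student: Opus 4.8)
The plan is to reduce the bound on the exponential moment of the Brownian-bridge functional to the Hartree energy $\cee_H(\gamma)$ via a scaling argument together with the variational formula of Lemma \ref{lem:expBB}. First I would rewrite the exponent using Brownian scaling. Writing $B_{0,t_m}^j(s) = \sqrt{t_m}\, \tilde B_{0,1}^j(s/t_m)$ for rescaled bridges $\tilde B_{0,1}^j$ on $[0,1]$ and changing variables $s = t_m r$, the integral $\int_0^{t_m}\gamma(B_{0,t_m}^j(s)-B_{0,t_m}^k(s))\,ds$ becomes $t_m\int_0^1 \gamma(\sqrt{t_m}(\tilde B_{0,1}^j(r)-\tilde B_{0,1}^k(r)))\,dr$, and under the scaling hypothesis \ref{con:scaling} we have $\gamma(\sqrt{t_m}\,x) = t_m^{-\alpha/2}\gamma(x)$. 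Recalling $t_m = m^{2/(2-\alpha)}t$, the prefactor $t_m \cdot t_m^{-\alpha/2} = t_m^{(2-\alpha)/2} = m\, t^{(2-\alpha)/2}$, so
\begin{equation*}
	\frac1m \sum_{j<k}\int_0^{t_m}\gamma(B_{0,t_m}^j(s)-B_{0,t_m}^k(s))\,ds
	= t^{\frac{2-\alpha}2}\sum_{j<k}\int_0^1 \gamma(\tilde B_{0,1}^j(r)-\tilde B_{0,1}^k(r))\,dr\,.
\end{equation*}
Wait — here I must be careful: since $\mu$ is assumed integrable in the hypothesis of the lemma, $\gamma$ is a bounded continuous function, but it need not itself satisfy the pointwise scaling; only the original $\mu$ does. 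So instead of literal scaling I would work with the approximation: the lemma's hypothesis is $\mu(\RR^\ell)<\infty$, which is the regularized case (think of $\gamma=\gamma_\epsilon$), and the correct move is to bound the exponential moment by appealing to the large-deviation/variational principle directly on the fixed bridge on $[0,1]$ at large "inverse temperature" $m$.

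Concretely, after normalizing $s\in[0,1]$ the quantity inside the $\log$ in \eqref{eqn:mmtau} is
\begin{equation*}
	\sup_{0\le t_m\le T_m}\EE\exp\Bigl\{\tfrac1m\sum_{j<k}\int_0^{t_m}\gamma(B_{0,t_m}^j-B_{0,t_m}^k)\Bigr\},
\end{equation*}
and dividing by $mT_m$ and letting $m\to\infty$ is precisely the regime where a Varadhan-type lemma applies: the $m$ independent bridges, under the tilting by $\exp\{\frac1m\sum_{j<k}\cdots\}$, should behave like $m$ i.i.d. copies of a single bridge whose law solves a McKean–Vlasov-type variational problem. The standard route (as in Chen's monograph and \cite{HLN15}) is: (1) use symmetry and the substructure $\sum_{j<k}\gamma(B^j-B^k) = \frac12\sum_{j\ne k}\gamma(B^j-B^k)$ together with the empirical-measure formalism $L_m = \frac1m\sum_j \delta_{B^j}$, so that $\frac1{m^2}\sum_{j\ne k}\gamma(B^j-B^k)\approx \iint\gamma(x-y)L_m(dx)L_m(dy)$; (2) invoke the Donsker–Varadhan / Gärtner large deviation principle for the empirical measure of i.i.d. Brownian bridges (with speed $m$), whose rate function is relative entropy with respect to bridge law; (3) apply Varadhan's lemma to get that the limit equals the sup over probability measures $\nu$ on path space of $\frac12\langle\gamma\ast\rho_\nu,\rho_\nu\rangle - $ (entropy cost), where $\rho_\nu$ is the occupation density; (4) evaluate this variational problem, using time-homogeneity and the Feynman–Kac/Rayleigh–Ritz identification (Lemma \ref{lem:expBB} is the $\epsilon$-independent template) to reduce it to the static variational quantity $\frac12\cee_H(\gamma)$. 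The factor $t^{(2-\alpha)/2}$ that I extracted above then combines with the $T_m = m^{2/(2-\alpha)}T$ normalization via the scaling relation $\cee_H(\lambda\gamma)=\lambda^{2/(2-\alpha)}\cee_H(\gamma)$ from \eqref{scal:cee}, and a short computation shows the $t$-dependence cancels against $T_m$ in the right way to leave exactly $\frac12\cee_H(\gamma)$ on the right-hand side.

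Alternatively — and this is probably the cleaner path to actually write down — I would avoid the full LDP machinery and instead mimic the proof of Lemma \ref{lem:expBB} together with a subadditivity / Feynman–Kac estimate. Namely, bound the bridge expectation by a Brownian expectation using the Girsanov identity \eqref{id.BBBM} (paying a harmless polynomial factor $(1-\lambda)^{-d/2}$ and truncating near the endpoints), reduce to $\EE\exp\{\frac1m\sum_{j<k}\int_0^{t_m}\gamma(B^j(s)-B^k(s))\,ds\}$ for genuine Brownian motions, then recognize this as $\|e^{t_m H_m/2}\|$-type object where $H_m$ is the $m$-body Schrödinger operator $\frac12\sum\Delta_j + \frac1m\sum_{j<k}\gamma(x_j-x_k)$ on $(\RR^\ell)^m$; its top of spectrum, after the scaling $x\mapsto \sqrt{t_m}x$, is governed by $\sup\{\frac1{m}\sum_{j<k}\int\gamma(x_j-x_k)g^2 - \frac12\sum\int|\nabla_j g|^2\}$, and a standard product/tensorization + mean-field argument (the $\frac1m$ normalization is exactly what makes the mean-field limit of this sup equal $\frac12\cee_H(\gamma)$ per particle, giving total $\frac{m}{2}\cee_H(\gamma)\cdot(\text{time})$) yields the claim after dividing by $mT_m$.

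The main obstacle I anticipate is step (2)/(3): justifying the passage to the mean-field variational limit with the correct constant $\frac12$ (rather than, say, $\cee_H$ or $\frac14\cee_H$), and in particular controlling the diagonal / self-interaction terms and the fact that $\gamma$ may be only locally integrable and unbounded near $0$ when one ultimately removes the regularization $\mu(\RR^\ell)<\infty$. Under the lemma's stated hypothesis $\mu(\RR^\ell)<\infty$, $\gamma$ is bounded and continuous, which sidesteps the integrability issue; so the real work is the mean-field upper bound, for which I would rely on the Donsker–Varadhan estimate for occupation measures of Brownian bridges exactly as invoked in \cite[Proposition 3.1]{MR3414457} (already cited in the proof of Lemma \ref{lem:expBB}) and a Hölder/subadditivity argument to pass from the bridge on $[0,t_m]$ (long interval) to the static problem, with the scaling bookkeeping handled by \eqref{scal:cee}.
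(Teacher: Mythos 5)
Your second route (``the cleaner path'') has the same skeleton as the paper's proof: use the Girsanov identity \eqref{id.BBBM} to pass from Brownian bridges to Brownian motions on $[0,\lambda t_m]$, control the remaining piece of the time interval, and then invoke a mean-field high-moment asymptotic for the Brownian functional. Your initial scaling attempt is correctly abandoned (the lemma assumes only $\mu(\RR^\ell)<\infty$, not \ref{con:scaling}, so $\gamma$ has no pointwise scaling), and your instinct to reduce to genuine Brownian motions is exactly right. Two of the smaller steps you gloss over are handled in the paper as follows: the supremum over $t_m\in[0,T_m]$ is removed by the monotonicity of the Brownian (non-bridge) functional in time, as in \cite[Lemma 4.1]{HLN15}; and the discarded interval $[\lambda t_m,t_m]$ is not a ``harmless polynomial factor'' but costs $\exp\{\tfrac{(m-1)t_m}{2}\gamma(0)(1-\lambda)\}$, which contributes $\tfrac{1-\lambda}{2}\gamma(0)$ after normalizing by $mT_m$ and only disappears in the final limit $\lambda\to1^-$.

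The genuine gap is the step you yourself flag as the main obstacle: the bound
\begin{equation*}
	\limsup_{m\to\infty}\frac1{mT_m}\log\EE\exp\Bigl\{\frac1m\sum_{1\le j<k\le m}\int_0^{T_m}\gamma\bigl(B^j(s)-B^k(s)\bigr)ds\Bigr\}\le\frac12\,\cee_H(\gamma)\,.
\end{equation*}
This is not a routine tensorization; it is precisely the content of \cite{ChPh15}*{Theorem 1.1}, which the paper cites as a black box. Your sketch (identify the expectation with a semigroup norm of the $m$-body operator $\tfrac12\sum_j\Delta_j+\tfrac1m\sum_{j<k}\gamma(x_j-x_k)$ and pass to the Hartree functional per particle) describes what that theorem asserts, but the passage from the $m$-body ground-state energy to $\tfrac{m}{2}\cee_H(\gamma)$ with the correct constant is the hard analytic core, and neither the empirical-measure LDP route nor the ``standard product/tensorization'' route you mention actually delivers it without substantial work. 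As written, the proposal replaces the one nontrivial citation in the paper's proof with an acknowledged open step, so it does not yet constitute a proof; citing the Chen--Phan asymptotic (or reproducing its argument) is what is needed to close it.
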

	\begin{proof}
		For each $\lambda\in(0,1)$, we note that
		\begin{multline*}
			\EE \exp \left\{ \frac{1}{m} \sum_{1 \leq j < k \leq m} \int_0^{t_m} \gamma\left( B_{0, t_m}^j(s)-B_{0, t_m}^k(s) \right) ds \right\}
			\\\le
			e^{\frac{(m-1)t_m} 2\gamma(0)(1- \lambda)} \EE \exp \left\{ \frac{1}{m} \sum_{1 \leq j < k \leq m} \int_0^{\lambda t_m} \gamma\left( B_{0, t_m}^j(s)-B_{0, t_m}^k(s) \right) ds \right\}\,.
		\end{multline*}
		Using \eqref{id.BBBM}, we see that the expectation above is at most
		\begin{align*}
			(1- \lambda)^{-\frac{m\ell}2}\EE\exp\lt\{\frac1m\sum_{1\le j<k\le m}\int_0^{\lambda t_m}\gamma(B^j(s)-B^k(s))ds\rt\}\,.
		\end{align*}
		In addition, reasoning as in \cite[Lemma 4.1]{HLN15}, we see that
		\begin{multline*}
			\sup_{0 \leq t_m \leq T_m}\EE \exp \left\{ \frac{1}{m} \sum_{1 \leq j < k \leq m} \int_0^{\lambda t_m} \gamma\left( B^j(s)-B^k(s) \right) ds \right\}
			\\=\EE \exp \left\{ \frac{1}{m} \sum_{1 \leq j < k \leq m} \int_0^{\lambda T_m} \gamma\left( B^j(s)-B^k(s) \right) ds \right\}\,.
		\end{multline*}
		It follows that
		\begin{multline*}
			\limsup_{m\to\infty}\frac1{mT_m} \log \sup_{0 \leq t_m \leq T_m} \EE \exp \left\{ \frac{1}{m} \sum_{1 \leq j < k \leq m} \int_0^{t_m} \gamma\left( B_{0, t_m}^j(s)-B_{0, t_m}^k(s) \right) ds \right\}
			\\\le \frac{1- \lambda}2\gamma(0)+  \limsup_{m\to\infty}\frac1{mT_m}\EE\exp\lt\{\frac{1}{m} \sum_{1 \leq j < k \leq m} \int_0^{\lambda T_m} \gamma\left( B^j(s)-B^k(s) \right) ds \rt\}\,.
		\end{multline*}
		Applying \cite{ChPh15}*{Theorem 1.1}, we get 
		\begin{align*}
		\limsup_{m\to\infty}\frac1{m \lambda T_m}\EE\exp\lt\{\frac{1}{m} \sum_{1 \leq j < k \leq m} \int_0^{\lambda T_m} \gamma\left( B^j(s)-B^k(s) \right) ds \rt\}\leq \frac{1}{2} \mathcal{E}_H(\gamma)\,.
		\end{align*}
		Thus we have shown
		\begin{multline*}
		\limsup_{m \to \infty} \frac{1}{mT_m} \log \sup_{0 \leq t_m \leq T_m} \EE \exp \left( \frac{1}{m} \sum_{1 \leq j < k \leq m} \int_0^{t_m} \gamma\left( B_{0, t_m}^j-B_{0, t_m}^k(s) \right) ds \right) 
		\\\leq \frac{\lambda}{2} \mathcal{E}_H(\gamma) + \frac{1- \lambda}{2}\gamma(0)\,.
		\end{multline*}
		Finally, we send $\lambda \to 1^-$ to finish the proof. 
	\end{proof}

	\begin{proposition}\label{prop:H2m}
		Assuming \ref{Cov:Dalang}, for every fixed $T>0$, 
		\begin{equation}\label{id:ma}
			\lim_{m\to\infty}m^{-\frac{4-\alpha}{2-\alpha}}\log\sup_{0<t\le T}\sup_{x\in\RR^\ell}\EE\lt(\frac{\Z(x_0;t,x)}{p_t(x-x_0)}\rt)^m\leq\frac T2\cee_H(\gamma)
		\end{equation}
		where $\cee_H(\gamma)$ is the Hartree energy defined in \eqref{eqn:ceeg}.
	\end{proposition}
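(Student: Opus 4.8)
The plan is to dominate the $m$-th moment by an exponential functional of independent Brownian bridges and then invoke the high-moment asymptotics already recorded in the paper. Specializing \eqref{ineq:upB} (equivalently \eqref{tmp:supZm}) to $x_1=\dots=x_m=x$ and $z_1=\dots=z_m=x_0$ gives
\begin{equation}\label{plan:Zm}
	\EE\left(\frac{\Z(x_0;t,x)}{p_t(x-x_0)}\right)^m\le \EE\exp\left\{\int_0^t\sum_{1\le j<k\le m}\gamma\bigl(B_{0,t}^j(s)-B_{0,t}^k(s)\bigr)\,ds\right\},
\end{equation}
whose right-hand side is independent of $x$ and $x_0$. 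Under \ref{Cov:Dalang} one has $\gamma\ge0$, and the bridge scaling $B_{0,t}(s)\overset{d}{=}\sqrt t\,B_{0,1}(s/t)$ together with \ref{con:scaling} shows that the right-hand side of \eqref{plan:Zm} equals $\EE\exp\{t^{1-\alpha/2}\sum_{1\le j<k\le m}\int_0^1\gamma(B_{0,1}^j(r)-B_{0,1}^k(r))\,dr\}$, which is non-decreasing in $t$; hence for $0<t\le T$ it is at most $\Theta_T(m)$ from \eqref{def:Thetam}. So it suffices to establish
\begin{equation}\label{plan:Theta}
	\limsup_{m\to\infty}m^{-\frac{4-\alpha}{2-\alpha}}\log\Theta_T(m)\le\frac T2\,\cee_H(\gamma).
\end{equation}

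To estimate $\Theta_T(m)=\EE\exp\{\sum_{1\le j<k\le m}\int_0^T\gamma(B_{0,T}^j(s)-B_{0,T}^k(s))\,ds\}$ I would split the time integral at $\lambda T$ for a parameter $\lambda\in(0,1)$. Since the bridge is reversible (with $\{B_{0,T}(T-\cdot)\}$ having the same joint law as $\{B_{0,T}(\cdot)\}$ across the $m$ copies), the contribution of $[\lambda T,T]$ has the same distribution as the contribution of $[0,(1-\lambda)T]$; applying H\"older's inequality with conjugate exponents $(1-\delta)^{-1}$ and $\delta^{-1}$, $\delta\in(0,1)$, gives
\begin{align}\label{plan:holder}
	\Theta_T(m)
	&\le \left(\EE\exp\left\{\tfrac1{1-\delta}\sum_{1\le j<k\le m}\int_0^{\lambda T}\gamma\bigl(B_{0,T}^j(s)-B_{0,T}^k(s)\bigr)ds\right\}\right)^{1-\delta}\notag\\
	&\quad\times\left(\EE\exp\left\{\tfrac1\delta\sum_{1\le j<k\le m}\int_0^{(1-\lambda)T}\gamma\bigl(B_{0,T}^j(s)-B_{0,T}^k(s)\bigr)ds\right\}\right)^{\delta}.
\end{align}
Each factor now involves the bridge on an interval strictly shorter than $T$, so the Girsanov identity \eqref{id.BBBM} with $d=m\ell$ (after a routine truncation $\gamma\wedge N$ to handle the unbounded integrand, and discarding the Gaussian damping factor $\le1$) replaces the bridge by a standard Brownian motion at the cost of prefactors $(1-\lambda)^{-m\ell/2}$ and $\lambda^{-m\ell/2}$. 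Finally, for $a,\tau>0$ the Brownian scaling $B(s)\overset{d}{=}\sqrt c\,B(s/c)$ with $c=a^{-2/(2-\alpha)}$ and the relation $\gamma(cx)=c^{-\alpha}\gamma(x)$ yield
\begin{equation}\label{plan:scale}
	\EE\exp\left\{a\sum_{1\le j<k\le m}\int_0^\tau\gamma(B^j(s)-B^k(s))ds\right\}=\EE\exp\left\{\sum_{1\le j<k\le m}\int_0^{\tau a^{2/(2-\alpha)}}\gamma(B^j(s)-B^k(s))ds\right\},
\end{equation}
which I would use with $(a,\tau)=((1-\delta)^{-1},\lambda T)$ and $(a,\tau)=(\delta^{-1},(1-\lambda)T)$.

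At this point \cite{ChPh15}*{Theorem 1.1} — valid under \ref{Cov:Dalang}, and the very ingredient used (via scaling) in the proof of Lemma \ref{lem:limtaum} — gives, for every $\tau>0$,
\begin{equation*}
	\limsup_{m\to\infty}m^{-\frac{4-\alpha}{2-\alpha}}\log\EE\exp\left\{\sum_{1\le j<k\le m}\int_0^\tau\gamma(B^j(s)-B^k(s))ds\right\}\le\frac\tau2\,\cee_H(\gamma).
\end{equation*}
Inserting this into \eqref{plan:holder}--\eqref{plan:scale}, and noting that the prefactors contribute only $O(m)=o(m^{(4-\alpha)/(2-\alpha)})$ to the logarithm, I would obtain
\begin{equation*}
	\limsup_{m\to\infty}m^{-\frac{4-\alpha}{2-\alpha}}\log\Theta_T(m)\le\frac T2\,\cee_H(\gamma)\left[\lambda(1-\delta)^{-\frac\alpha{2-\alpha}}+(1-\lambda)\delta^{-\frac\alpha{2-\alpha}}\right].
\end{equation*}
Sending $\delta\downarrow0$ and $\lambda\uparrow1$ jointly along $1-\lambda=\delta^{2/(2-\alpha)}$ makes the bracket tend to $1$, which gives \eqref{plan:Theta} and completes the proof.

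The hard part is the reduction from a Brownian bridge functional to a Brownian motion functional. Unlike in Lemma \ref{lem:limtaum}, here $\gamma$ is unbounded near the origin, so the residual integral over $[\lambda T,T]$ cannot be dominated by $\gamma(0)(1-\lambda)T$; the reversibility-plus-H\"older device above is meant to substitute for that estimate, and its delicate feature is that $\delta$ and $\lambda$ must be driven to their limits in a coordinated manner so that the loss factor $\lambda(1-\delta)^{-\alpha/(2-\alpha)}+(1-\lambda)\delta^{-\alpha/(2-\alpha)}$ converges to $1$ rather than to some constant strictly larger than $1$. A crude Cauchy--Schwarz split (i.e.\ $\delta=\tfrac12$) would leave behind a factor comparable to $2^{\alpha/(2-\alpha)}$, which blows up as $\alpha\uparrow2$ and would be fatal.
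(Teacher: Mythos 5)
Your argument is correct, and it resolves the one genuinely delicate step by a different device than the paper's. Both proofs start from \eqref{ineq:upB}, reduce the claim to the asymptotics of the bridge functional $\Theta_T(m)$, pass from bridges to Brownian motions via the Girsanov identity \eqref{id.BBBM}, and feed the result into \cite{ChPh15}*{Theorem 1.1} (your fixed-horizon form of that input is, after the scaling identity you record in \eqref{plan:scale}, literally the diagonal statement the paper extracts in Lemma \ref{lem:limtaum}). The shared obstacle is that \eqref{id.BBBM} only covers a sub-interval $[0,\lambda T]$, and for singular $\gamma$ the leftover integral over $[\lambda T,T]$ cannot be bounded by $\gamma(0)(1-\lambda)T$. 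The paper handles this by a H\"older split in the \emph{covariance} direction, $\gamma=\gamma_\epsilon+(\gamma-\gamma_\epsilon)$: the bounded piece goes through Lemma \ref{lem:limtaum}, while the singular remainder is shown to be negligible via Cauchy--Schwarz, bridge reversibility, hypercontractivity and the chaos/Fourier estimate borrowed from \cite{MR3354615}. You instead split in the \emph{time} direction, fold $[\lambda T,T]$ back onto $[0,(1-\lambda)T]$ by reversibility, and absorb the H\"older exponents into the time horizon through the scaling hypothesis \ref{con:scaling}; you correctly identify that $\delta\downarrow0$ and $\lambda\uparrow1$ must be coupled (e.g.\ $1-\lambda=\delta^{2/(2-\alpha)}$) so that the loss factor $\lambda(1-\delta)^{-\alpha/(2-\alpha)}+(1-\lambda)\delta^{-\alpha/(2-\alpha)}$ tends to $1$. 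What this buys is the complete elimination of the mollification, the hypercontractivity step and the estimates from \cite{MR3354615}; what it costs is that \cite{ChPh15}*{Theorem 1.1} is now invoked directly for the singular $\gamma$, whereas the paper only ever invokes it for kernels with $\mu(\RR^\ell)<\infty$. That theorem is stated for nonnegative kernels under Dalang's condition (Riesz kernels being its motivating example), and the paper's mollification is motivated by the bridge-tail estimate rather than by the scope of Chen--Phan, so this is legitimate --- but it is the one external input you should verify in the cited reference. The remaining technical points (monotonicity of the bridge functional in $t$ via bridge scaling and \ref{con:scaling}, and the truncation $\gamma\wedge N$ with monotone convergence needed to apply \eqref{id.BBBM} to an unbounded functional, which works because $\gamma\ge0$ under \ref{Cov:Dalang}) are handled correctly.
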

	\begin{proof}
		 Applying inequality \eqref{ineq:upB}, we have
		\begin{equation*}
			\sup_{x\in\RR^\ell}\EE\lt(\frac{\Z(x_0;t,x)}{p_t(x-x_0)}\rt)^m\le \EE\exp\lt\{\int_0^t\sum_{1\le j<k\le m}\gamma(B_{0,t}^j(s)-B_{0,t}^k(s))ds \rt\}\,.
		\end{equation*}
		In addition, by the change of variable $s \rightarrow s m^{-\frac{2}{2- \alpha}}$ and the scaling property of Brownian bridge, $\{B_{0,\lambda t}(\lambda s),s\in[0, t]\} \stackrel{\mbox{law}}{=} \{\sqrt{\lambda} B_{0,t}(s),s\in[0,t]\}$, the right hand side in the above expression is the same as
		\begin{equation*}
		 	\EE\exp\lt\{\frac1m\int_0^{m^{\frac2{2-\alpha}}t}\sum_{1\le j<k\le m}\gamma\left(B_{0,m^{\frac2{2-\alpha}}t}^j(s)-B_{0,m^{\frac2{2-\alpha}}t}^k(s)\right)ds \rt\}\,.
		\end{equation*}
		Hence, denoting $t_m=m^{\frac2{2- \alpha}}t$ and $T_m=m^{\frac2{2- \alpha}}T$, we see that \eqref{id:ma} is equivalent to the statement
		\begin{multline}\label{tmp1}
			\limsup_{m\to\infty}\frac1{mT_m} \log\sup_{0<t_m\le T_m}\EE\exp\lt\{\frac1m\int_0^{t_m}\sum_{1\le j<k\le m}\gamma(B_{0,t_m}^j(s)-B_{0,t_m}^k(s))ds \rt\}
			\\\le \frac 12 \cee_H(\gamma)\,.
		\end{multline}
		Let $p,q>1$ such that $p^{-1}+q^{-1}=1$. By H\"older inequality
		\begin{align*}
		 	\EE\exp\lt\{\frac1m\int_0^{t_m}\sum_{1\le j<k\le m}\gamma(B_{0,t_m}^j(s)-B_{0,t_m}^k(s))ds \rt\}
		 	\le \A^{\frac1p}\B^{\frac1q}
		\end{align*} 
		where
		\begin{align*}
			\A&=\sup_{0<t_m\le T_m} \EE\exp\lt\{\frac pm\int_0^{t_m}\sum_{1\le j<k\le m}\gamma_ \epsilon(B_{0,t_m}^j(s)-B_{0,t_m}^k(s))ds \rt\}
			\\\B&=\sup_{0<t_m\le T_m}\EE\exp\lt\{\frac qm\int_0^{t_m}\sum_{1\le j<k\le m}(\gamma- \gamma_\epsilon)(B_{0,t_m}^j(s)-B_{0,t_m}^k(s))ds \rt\}\,.
		\end{align*}
		From Lemma \ref{lem:limtaum} and the fact that  $\cee_H(\gamma_\epsilon) \leq \cee_H(\gamma)$ (see \eqref{eqn:cee}), we have
		\begin{equation*}
			\lim_{p\to1^+}\limsup_{m\to\infty}\frac1{mT_m}\log \A\le \frac12\cee_H(\gamma)\,.
		\end{equation*}
		Hence, it suffices to show for every fixed $q>1$,
		\begin{equation}\label{eq: suppose we know}
			\lim_{\epsilon\downarrow0}\limsup_{m\to\infty}\frac1{mT_m}\log\B=0\,.
		\end{equation}
	By  Cauchy-Schwarz inequality and the fact that $B_{0,t} {\stackrel {\text{law}} {=}} B_{0,t}(t-\cdot)$, we have 
	\begin{align*}
	&\EE \exp \left\{ \frac{q}{m} \int_0^{t_m} \sum_{1 \leq j < k \leq m} \left( \gamma - \gamma_{\epsilon} \right) \left( B_{0, t_m}^j(s)- B_{0, t_m}^k(s) \right) ds \right\}\\
	&\quad\leq \EE \exp \left\{ \frac{2q}{m} \int_0^{\frac{t_m}{2}} \sum_{1 \leq j < k \leq m} \left( \gamma - \gamma_{\epsilon} \right) \left( B_{0, t_m}^j(s)- B_{0, t_m}^k(s) \right) ds \right\}\,,
	\end{align*}
	Together with \eqref{id.BBBM}, we arrive at 
	\begin{align*}
	&\EE \exp \left\{ \frac{q}{m} \int_0^{t_m} \sum_{1 \leq j < k \leq m} \left( \gamma - \gamma_{\epsilon} \right) \left( B_{0, t_m}^j(s)- B_{0, t_m}^k(s) \right) ds \right\}\\
	&\quad \leq 2^{m \ell} \EE \exp \left\{ \frac{2q}{m} \int_0^{\frac{t_m}{2}} \sum_{1 \leq j < k \leq m} \left( \gamma - \gamma_{\epsilon} \right) \left( B^j(s)- B^k(s) \right) ds \right\}\,,
		\end{align*}	
	 note that the right hand side of the above inequality is the $m$-th moment of the solution to the equation \eqref{eqn:SHE} driven by the noise with spatial covariance $\frac{2q}{m}\left(\gamma - \gamma_{\epsilon}\right)$, i.e., $\EE u(\frac{t_m}{2}, x)^m$,  the initial condition is $u_0(x) \equiv 2^{\ell}$. Using the hyper-contractivity as in \cites{HLN15,MR3531492}, we have 
	 \begin{align*}
	& \EE \exp \left\{ \frac{2q}{m} \int_0^{\frac{t_m}{2}} \sum_{1 \leq j < k \leq m} \left( \gamma - \gamma_{\epsilon} \right) \left( B^j(s)- B^k(s) \right) ds \right\}\\
	&\quad \leq \left[\EE \exp \left\{ \frac{2q(m-1)}{m} \int_0^{\frac{t_m}{2}}  \left( \gamma - \gamma_{\epsilon} \right) \left( B^1(s)- B^2(s) \right) ds \right\}\right]^{\frac{m}{2}} \\
	& \quad \leq \left[\sum_{k=0}^{\infty} (2q)^k \int_{[0, \frac{t_m}{2}]^k_{<}} \int_{\RR^{\ell k}} \prod_{j=1}^k \left(e^{-|\eta_j|^2 } (s_{j+1}-s_j)^{-\frac{\alpha}{2}}\right) \prod_{j=1}^k \left(1-e^{-\epsilon (s_{j+1} - s_j)^{-1} |\eta_j|^2} \right) \mu(\eta)d\eta ds \right]^{\frac{m}{2}}
	 \end{align*}
	where in the last line we have used the estimate (3.7) in \cite{MR3354615} and $\mu(\eta)d \eta$  is abbreviation for $\prod_{j=1}^k \mu(\eta_j)d\eta_j$. Since $\alpha < 2$, we can find a $\beta>0$ such that $\beta < 1-\frac{\alpha}{2}$. Then using the elementary inequality
	\begin{equation*}
	1-e^{-x} \leq  C_{\beta}x^{\beta}\quad\forall x>0\,,
	\end{equation*}
	we obtain 
	\begin{align*}
	&\sum_{k=0}^{\infty} (2q)^k \int_{[0, \frac{t_m}{2}]^k_{<}} \int_{\RR^{\ell k}} \prod_{j=1}^k \left(e^{-|\eta_j|^2 }  (s_{j+1}-s_j)^{-\frac{\alpha}{2}}\right) \prod_{j=1}^k \left(1-e^{-\epsilon (s_{j+1} - s_j)^{-1} |\eta_j|^2} \right) \mu(\eta)d\eta ds\\
	&\leq \sum_{k=0}^{\infty} (C_{\beta}2q\epsilon^{\beta})^k \int_{[0, \frac{t_m}{2}]^k_{<}} \int_{\RR^{\ell k}} \prod_{j=1}^k \left( e^{-|\eta_j|^2 } |\eta_j|^{2\beta}\right) (s_{j+1}-s_j)^{-\frac{\alpha}{2}-\beta}  \mu(\eta)d\eta ds\\
	& \leq \sum_{k=0}^{\infty} \frac{(Cq)^k t_m^{(-\frac{\alpha}{2} -\beta +1) k } \epsilon^{k\beta}}{\Gamma((-\frac{\alpha}{2} -\beta +1) k + 1)} \le C \exp \left(c t_m \epsilon^{\frac{\beta}{ -\frac{\alpha}{2} -\beta +1}}  \right)\,.
	\end{align*}
	Hence, we have shown
	\begin{align*}
	\B\leq C^m \exp \left( m T_m \epsilon^{\frac{\beta}{ -\frac{\alpha}{2} -\beta +1}}  \right)\,,
	\end{align*}
	from which \eqref{eq: suppose we know} follows. The proof for \eqref{id:ma} is complete. 
	\end{proof}

\subsection*{H\"older continuity} 
    We investigate the regularity of the process $\frac{\Z(x;t,y)}{p_t(y-x)}$ in the variables $x$ and $y$. These properties will be used in the proof of upper bound. For each integer $m\ge2$ and $t>0$, we recall that $\Theta_t(m)$ is defined in \eqref{def:Thetam}. 
	
	Note that from Proposition \ref{prop:upB}, we have
	\begin{equation}\label{ineq:T1}
		\sup_{s\in(0, t]}\sup_{x,y_1,\dots,y_{m}\in\RR^\ell}\EE\prod_{j=1}^{m}\frac{\Z(x;s,y_j)}{p_s(y_j-x)}=\Theta_t(m)\,.
	\end{equation}
	\begin{lemma}\label{lem:pa}
		For every $r>0$ and $y_1,y_2\in\RR^\ell$
		\begin{equation*}
			\||p_r(\cdot-y_1)-p_r(\cdot-y_2)|\|^2_{\HH_0}\le C r^{-\frac \alpha2}\lt(\frac{|y_2-y_1|}{r^{1/2}}\wedge 1 \rt)
		\end{equation*}
		under  \ref{Cov:Dalang}; and
		\begin{equation*}
			\||p_r(\cdot-y_1)-p_r(\cdot-y_2)|\|^2_{\HH_0}\le C \lt(\frac{|y_2-y_1|^2}{r}\wedge 1 \rt)
		\end{equation*}
		under \ref{Cov:bounded}.
		In the above, the constant $C$ does not depend on $y_1,y_2$ nor $r$.
	\end{lemma}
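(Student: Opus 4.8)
The plan is to reduce everything to the sizes of $p_r$ and $\nabla p_r$ in $\HH_0$ via scaling, the one delicate point being the absolute value inside the norm: since $\| \, |f| \, \|_{\HH_0}$ cannot be read off Plancherel's identity applied to $f = p_r(\cdot - y_1) - p_r(\cdot - y_2)$ (the difference changes sign), one must first replace it by a non-negative majorant. Set $h = y_2 - y_1$. By the fundamental theorem of calculus,
\[
 p_r(x - y_1) - p_r(x - y_2) = \int_0^1 \nabla p_r(x - y_1 - \theta h)\cdot h\, d\theta ,
\]
hence $|p_r(x - y_1) - p_r(x - y_2)| \le |h| \int_0^1 |\nabla p_r|(x - y_1 - \theta h)\, d\theta =: G(x)$, where $|\nabla p_r|(z) = \frac{|z|}{r} p_r(z) \ge 0$. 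Under \ref{Cov:Dalang} one has $\gamma \ge 0$ (or $\gamma = \delta_0$), so the quadratic form $\langle\cdot,\cdot\rangle_{\HH_0}$ is monotone on non-negative functions; together with Minkowski's inequality and translation invariance of $\|\cdot\|_{\HH_0}$ this gives $\||p_r(\cdot - y_1) - p_r(\cdot - y_2)|\|_{\HH_0} \le \|G\|_{\HH_0} \le |h|\, \||\nabla p_r|\|_{\HH_0}$.

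The next step is a scaling computation for the base quantities. From $p_r(z) = r^{-\ell/2} p_1(z/\sqrt r)$ and the homogeneity $\mu(c\xi) = c^{\alpha - \ell}\mu(\xi)$ (equivalently $\gamma(cz) = c^{-\alpha}\gamma(z)$), a change of variables yields $\||p_r|\|_{\HH_0}^2 = r^{-\alpha/2}\||p_1|\|_{\HH_0}^2$ and $\||\nabla p_r|\|_{\HH_0}^2 = r^{-1-\alpha/2}\, \| \, |\cdot|\, p_1 \, \|_{\HH_0}^2$. Both constants are finite: $\||p_1|\|_{\HH_0}^2 = (2\pi)^{-\ell}\int e^{-|\xi|^2}\mu(\xi)\,d\xi < \infty$, by splitting the integral at $|\xi| = 1$ and using \eqref{k5} (on $|\xi|\le 1$ one has $1 + |\xi|^2 \le 2$, while $(1+|\xi|^2)e^{-|\xi|^2}$ is bounded on $|\xi|>1$); and since $|z| p_1(z) \le C p_2(z)$ pointwise, monotonicity gives $\| \, |\cdot|\, p_1\, \|_{\HH_0} \le C \||p_2|\|_{\HH_0} < \infty$ by the same estimate. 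Plugging back, $\||p_r(\cdot - y_1) - p_r(\cdot - y_2)|\|_{\HH_0}^2 \lesssim r^{-\alpha/2}\, |h|^2 / r$; together with the trivial bound $\||p_r(\cdot - y_1) - p_r(\cdot - y_2)|\|_{\HH_0}^2 \le 4\||p_r|\|_{\HH_0}^2 \lesssim r^{-\alpha/2}$ and the elementary inequality $\min(a^2,1) \le \min(a,1)$ applied to $a = |h|/\sqrt r$, this establishes the estimate under \ref{Cov:Dalang}.

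Under \ref{Cov:bounded} the argument is shorter. As $\gamma$ is the Fourier transform of a non-negative integrable $\mu$, $|\gamma(z)| \le \gamma(0)$ for all $z$, so for any $f$,
\[
 \| \, |f| \, \|_{\HH_0}^2 = \iint_{\RR^{2\ell}} |f(x)|\,|f(y)|\,\gamma(x-y)\,dx\,dy \le \gamma(0)\,\|f\|_{L^1(\RR^\ell)}^2 .
\]
Applying this to $f = p_r(\cdot - y_1) - p_r(\cdot - y_2)$ and invoking the standard bound $\|p_r(\cdot - y_1) - p_r(\cdot - y_2)\|_{L^1} \le \min\bigl(2,\, |h|\,\|\nabla p_r\|_{L^1}\bigr) = \min\bigl(2,\, c_\ell |h| / \sqrt r\bigr)$ — again from the fundamental theorem of calculus together with $\|\nabla p_r\|_{L^1} = \int \frac{|z|}{r} p_r(z)\,dz = c_\ell r^{-1/2}$ — gives $\||p_r(\cdot - y_1) - p_r(\cdot - y_2)|\|_{\HH_0}^2 \le C(|h|^2/r \wedge 1)$, as claimed. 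I do not expect a genuine obstacle here; the only step that must not be skipped is the reduction to a non-negative majorant in the first paragraph, precisely because $\| \, |f| \, \|_{\HH_0}$ is not controlled spectrally by $\|f\|_{\HH_0}$, after which only scaling and the elementary Gaussian bounds $|z|p_1(z)\le Cp_2(z)$ and $\|\nabla p_r\|_{L^1}\lesssim r^{-1/2}$ remain.
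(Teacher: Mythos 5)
Your argument is correct, and the \ref{Cov:bounded} half is essentially identical to the paper's (bound $\gamma$ by $\gamma(0)$, then use the $L^1$ increment estimate $\|p_r(\cdot-y_1)-p_r(\cdot-y_2)\|_{L^1}\lesssim \frac{|y_2-y_1|}{\sqrt r}\wedge 1$). For the \ref{Cov:Dalang} half you take a genuinely different decomposition. The paper splits the bilinear form as
\begin{equation*}
\iint f(y)f(z)\gamma(y-z)\,dy\,dz\le \sup_{z}|f*\gamma(z)|\cdot\|f\|_{L^1}\,,
\end{equation*}
with $f=|p_r(\cdot-y_1)-p_r(\cdot-y_2)|$, so that the convolution factor carries the singular scaling $\sup_z|f*\gamma(z)|\le 2\,p_r*\gamma(0)\lesssim r^{-\alpha/2}$ and the $L^1$ factor carries the increment $\frac{|y_2-y_1|}{\sqrt r}\wedge 1$; only the nonnegativity of $f$ and $\gamma$ and one Gaussian $L^1$ bound are needed. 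You instead dominate $|f|$ pointwise by the gradient majorant $G$, use monotonicity of the form on non-negative functions (your observation that this monotonicity, not Plancherel, is the right tool here is exactly the point — the paper's Cartesian estimate exploits the same fact), and then compute $\||\nabla p_r|\|_{\HH_0}^2=r^{-1-\alpha/2}\|\,|\cdot|\,p_1\|_{\HH_0}^2$ by scaling. Your route costs a bit more bookkeeping (finiteness of $\|\,|\cdot|\,p_1\|_{\HH_0}$ via $|z|p_1(z)\le Cp_2(z)$, Minkowski's integral inequality, translation invariance) but buys a strictly stronger intermediate estimate, $r^{-\alpha/2}\bigl(\frac{|y_2-y_1|^2}{r}\wedge 1\bigr)$, i.e.\ H\"older exponent $\eta$ up to $2-\alpha$ rather than $\frac{2-\alpha}{2}$ in the increment variable before the final weakening $\min(a^2,1)\le\min(a,1)$; the paper's is shorter and already sufficient for Proposition \ref{prop:hder}. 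Both arguments cover the case $\gamma=\delta_0$ since the relevant monotonicity and scaling identities persist there.
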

	\begin{proof}
		We denote $f(\cdot)=|p_r(\cdot-y_1)-p_r(\cdot-y_2)|$. Assuming first \ref{Cov:Dalang}, we observe the following simple estimate
		\begin{equation*}
			\iint_{\RR^\ell\times\RR^\ell}f(y)f(z)\gamma(y-z)dydz\le\sup_{z\in\RR^\ell}|f*\gamma(z)| \int_{\RR^\ell}f(y)dy\,.
		\end{equation*}
		Noting that
		\begin{equation*}
			\sup_{z\in\RR^\ell}|f*\gamma(z)|\le 2\sup_{z\in\RR^\ell}|p_r*\gamma(z)|=2p_r*\gamma(0)\lesssim r^{-\frac \alpha2}
		\end{equation*}
		and
		\begin{equation}\label{tmp:intf}
			\int_{\RR^\ell}f(y)dy\lesssim \lt(\frac{|y_2-y_1|}{r^{1/2}}\wedge 1 \rt)\,,
		\end{equation}
		the result easily follows. Under \ref{Cov:bounded}, we used the following inequality
		\begin{equation*}
			\iint_{\RR^\ell\times\RR^\ell}f(y)f(z)\gamma(y-z)dydz\le \gamma(0)\lt(\int_{\RR^\ell}f(y)dy \rt)^2
		\end{equation*}
		together with \eqref{tmp:intf} to obtain the result.
	\end{proof}
	
	\begin{proposition}\label{prop:hder} Assuming \ref{Cov:bounded} or \ref{Cov:Dalang}. There exists a constant $\eta\in(0,1)$ such that for every compact set $K$ and every integer $m\ge2$, 
	\begin{equation}\label{eqn:hder}
		\sup_{w\in\RR^\ell} \lt\|\sup_{\substack{x_1,x_2\in K,\\ y\in B(w,1)}}
		\frac{\lt|\frac{\Z(x_1;t,y)}{p_t(y-x_1)}-\frac{\Z(x_2;t,y)}{p_t(y-x_2)}\rt|}{|x_2-x_1|^{\eta}} \rt\|_{L^m(\Omega)}\le c_K(t) m^{\frac12}[\Theta_t(m)]^{\frac1m}e^{cm^{\frac{2}{2-\bar{\alpha}}}}\,,
	\end{equation} 
	and
	\begin{equation}\label{eqn:hder y}
		\sup_{w, x\in\RR^\ell} \lt\|\sup_{ y_1,y_2\in B(w,1)}
		\frac{\lt|\frac{\Z(x;t,y_1)}{p_t(y_1-x)}-\frac{\Z(x;t,y_2)}{p_t(y_2-x)}\rt|}{|y_2-y_1|^{\eta}} \rt\|_{L^m(\Omega)}\le c_K(t) m^{\frac12}[\Theta_t(m)]^{\frac1m}\,,
	\end{equation} 	
	where $B(w,1)$ is the closed unit ball in $\RR^\ell$ centered at $w$. 
	In the above, the constant $c$ depends only on $\bar{\alpha}$ and $\eta$ and $c_K(t)$ depends only on $K,t, \eta$.
	\end{proposition}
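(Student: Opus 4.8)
The plan is to study the normalized field $\Phi(z;t,y):=\Z(z;t,y)/p_t(y-z)$. Dividing \eqref{eqn:Z} by $p_t(y-z)$ and using the Chapman--Kolmogorov identity $p_{t-s}(y-w)p_s(w-z)=p_t(y-z)\,p_{\frac{s(t-s)}t}\big(w-\tfrac{(t-s)z+sy}t\big)$, one gets
\begin{equation*}
  \Phi(z;t,y)=1+\int_0^t\int_{\RR^\ell} q_{s,t}(z,y;w)\,\Phi(z;s,w)\,W(ds,dw),\qquad q_{s,t}(z,y;w):=p_{\frac{s(t-s)}t}\Big(w-\tfrac{(t-s)z+sy}t\Big).
\end{equation*}
Two structural facts drive everything: the integrand $\Phi(z;s,w)$ does \emph{not} depend on the terminal data $(t,y)$, and by \eqref{ineq:T1} one has $\sup_w\|\Phi(z;s,w)\|_{L^p(\Omega)}\le[\Theta_t(p)]^{1/p}$ for all $s\le t$ and all integers $p\ge2$. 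I would fix once and for all an exponent $\eta_0\in\big(0,\tfrac12\wedge(1-\tfrac{\bar\alpha}2)\big)$; the reason for this range is that it makes the time integral $\int_0^t\big(\tfrac{s(t-s)}t\big)^{-\frac{\bar\alpha}2-\eta_0}\big(\tfrac st\big)^{2\eta_0}\,ds$ converge, which is exactly what is needed below.

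First I would establish the increment estimate in the variable $y$. Since $\Phi(x;\cdot,\cdot)$ under the integral sign is independent of the endpoint, $\Phi(x;t,y_1)-\Phi(x;t,y_2)$ is a \emph{single} stochastic integral, with integrand $[q_{s,t}(x,y_1;w)-q_{s,t}(x,y_2;w)]\Phi(x;s,w)$. Applying Lemma \ref{lem:Wm}, then Lemma \ref{lem:pa} to $q_{s,t}(x,y_i;\cdot)=p_{s(t-s)/t}(\cdot-n_i)$ with $|n_1-n_2|=\tfrac st|y_1-y_2|$, and the elementary bound $a\wedge1\le a^{2\eta_0}$, I would obtain, uniformly in $x$ and $w$,
\begin{equation*}
  \big\|\Phi(x;t,y_1)-\Phi(x;t,y_2)\big\|_{L^p(\Omega)}\le c(t)\,p^{1/2}\,[\Theta_t(p)]^{1/p}\,|y_1-y_2|^{\eta_0}\qquad\text{for all integers }p\ge2.
\end{equation*}
Next I would treat the increment in $x$, where --- in contrast --- a Gronwall argument in the spirit of Proposition \ref{prop:ZZep} is unavoidable, since $\Phi(x_1;s,\cdot)$ and $\Phi(x_2;s,\cdot)$ are genuinely different processes. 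Writing
\begin{align*}
  \Phi(x_1;t,y)-\Phi(x_2;t,y)
  &=\int_0^t\!\!\int q_{s,t}(x_1,y;w)\big[\Phi(x_1;s,w)-\Phi(x_2;s,w)\big]\,W(ds,dw)\\
  &\quad+\int_0^t\!\!\int\big[q_{s,t}(x_1,y;w)-q_{s,t}(x_2,y;w)\big]\Phi(x_2;s,w)\,W(ds,dw),
\end{align*}
I would bound the first integral by Lemma \ref{lem:Wm} and $\|q_{s,t}(x_1,y;\cdot)\|_{\HH_0}^2\lesssim(\frac{s(t-s)}t)^{-\frac{\bar\alpha}2}$, the second as in the $y$-case with shift $\tfrac{t-s}t|x_1-x_2|$, so that $D_t:=\sup_{y'}\|\Phi(x_1;t,y')-\Phi(x_2;t,y')\|_{L^p(\Omega)}$ (finite by \eqref{ineq:T1}) satisfies $D_t^2\lesssim p\int_0^t(\frac{s(t-s)}t)^{-\frac{\bar\alpha}2}D_s^2\,ds+p\,[\Theta_t(p)]^{2/p}|x_1-x_2|^{2\eta_0}$. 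Lemma \ref{lem:gronwall} (with the non-decreasing function taken constant) then gives, uniformly in $y$,
\begin{equation*}
  \big\|\Phi(x_1;t,y)-\Phi(x_2;t,y)\big\|_{L^p(\Omega)}\le c(t)\,p^{1/2}\,[\Theta_t(p)]^{1/p}e^{c\,p^{2/(2-\bar\alpha)}}\,|x_1-x_2|^{\eta_0}\qquad\text{for all integers }p\ge2.
\end{equation*}

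Finally I would upgrade these pointwise increment moments to moments of the H\"older seminorm. Adding the two estimates via the triangle inequality yields a joint bound of order $(|x_1-x_2|+|y_1-y_2|)^{\eta_0}$ on the random field $(x,y)\mapsto\Phi(x;t,y)$ over $K\times B(w,1)$; a Garsia--Rodemich--Rumsey (Kolmogorov-type) argument on $K\times B(w,1)$ then converts it into an $L^p$-bound on the $\eta$-H\"older seminorm for any $\eta<\eta_0-2\ell/p$, with constant depending on $K,\ell,\eta,\eta_0,p$ but \emph{not} on $w$, so the outer supremum over $w$ costs nothing; the same argument run on $B(w,1)$ using only the $y$-increment bound gives \eqref{eqn:hder y}, with no exponential factor. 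To have a \emph{fixed} exponent $\eta$ work for \emph{all} $m\ge2$, I would fix $\eta\in(0,\eta_0)$, put $m_0:=\lceil 2\ell/(\eta_0-\eta)\rceil$, and take $p=m$ when $m\ge m_0$ --- which reproduces exactly the right-hand side of \eqref{eqn:hder} --- and $p=m_0$ when $2\le m<m_0$, absorbing the resulting fixed constant into $c_K(t)$ by means of $\Theta_t(m)\ge1$ (so $m^{1/2}[\Theta_t(m)]^{1/m}\ge\sqrt2$). The main obstacle is precisely this last piece of bookkeeping: one has to tie the integrability exponent in the Kolmogorov/Garsia--Rodemich--Rumsey step to $m$ so that a single H\"older exponent survives uniformly in $m$ while the constant keeps the sharp shape $c_K(t)\,m^{1/2}[\Theta_t(m)]^{1/m}e^{c\,m^{2/(2-\bar\alpha)}}$; the deterministic estimate of Lemma \ref{lem:pa} is the ingredient that makes the relevant time integrals converge in exactly the range $\eta_0<1-\bar\alpha/2$, and the finiteness of $\Theta_t(m)$ noted after \eqref{def:Thetam} is what keeps all constants finite.
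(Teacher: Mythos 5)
Your proposal is correct and follows essentially the same route as the paper: the same decomposition of the normalized field via Chapman--Kolmogorov, the same single-stochastic-integral treatment of the $y$-increment (hence no exponential factor in \eqref{eqn:hder y}), the same two-term splitting plus Lemma \ref{lem:gronwall} for the $x$-increment, and Garsia--Rodemich--Rumsey to pass to the H\"older seminorm. The only difference is that you make explicit the bookkeeping (choice of integrability exponent versus $m$, absorbing small $m$ into the constant) that the paper leaves implicit.
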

	\begin{proof}
		We present the proof under hypothesis \ref{Cov:Dalang} in detail. The proof for the other case is similar and is omitted.	We first show that for every $\eta\in(0,2- \alpha)$,
		\begin{equation}\label{tmp:estZ1}
			\sup_{x\in\RR^\ell}\lt\|\frac{\Z(x_1;t,x)}{p_t(x-x_1)}-\frac{\Z(x_2;t,x)}{p_t(x-x_2)}\rt\|_{L^m(\Omega)}
			\lesssim_t \sqrt m[\Theta_t(m)]^{\frac1m}|x_2-x_1|^{\frac\eta2} e^{cm^{\frac{2}{2-\alpha}}}
		\end{equation}
		Fix $t>0$ and $x_1,x_2,x\in\RR^\ell$. From \eqref{eqn:Z}, we have
		\begin{multline}\label{tmp:Z1}
			\frac{\Z(x_1;t,x)}{p_t(x-x_1)}-\frac{\Z(x_2;t,x)}{p_t(x-x_2)}
			=\int_0^t\int_{\RR^\ell} f(s,y) \frac{\Z(x_1;s,y)}{p_s(y-x_1)} W(ds,dy)
			\\+\int_0^t\int_{\RR^\ell}p_{\frac{s(t-s)}t}\lt(y-x_2-\frac st(x-x_2) \rt)
			\lt[\frac{\Z(x_2;s,y)}{p_t(y-x_2)}-\frac{\Z(x_1;s,y)}{p_t(y-x_1)}\rt]W(ds,dy)
		\end{multline}
		where
		\begin{equation*}
			f(s,y)=p_{\frac{s(t-s)}{t}} \left(y-x_1-\frac{s}{t}(x-x_1)\right) - p_{\frac{s(t-s)}{t}}\left(y-x_2-\frac{s}{t}(x-x_2)\right)\,.
		\end{equation*}
		Obviously $f$ also depends on $t,x_1,x_2$ and $x$, however these parameters will be omitted.
		For each integer $m\ge2$, applying Lemma \ref{lem:Wm} we see that
		\begin{equation*}
			\lt\|\int_0^t\int_{\RR^\ell} f(s,y) \frac{\Z(x_1;s,y)}{p_s(y-x_1)} W(ds,dy)\rt\|_{L^m(\Omega)}
			\leq\sqrt{4m} [\Theta_t(m)]^{\frac1m}\||f(s,y)| \1_{[0,t]}(s) \|_{\HH_{s,y}} \,. 
		\end{equation*}
		Applying Lemma \ref{lem:pa}, for every $\eta\in(0,2- \alpha)$, there exists $c_{\eta}>0$ such that
		\begin{equation*}
			\||f(s,y)| \1_{[0,t]}(s) \|_{\HH_{s,y}}\le c_{\eta}t^{\frac12-\frac{\alpha+\eta}4}  |x_2-x_1|^{\frac\eta2}\,.
		\end{equation*}
		Hence,
		\begin{align}\label{tmp:extZ2}
			\lt\|\int_0^t\int_{\RR^\ell} f(s,y) \frac{\Z(x_1;s,y)}{p_s(y-x_1)} W(ds,dy)\rt\|_{L^m(\Omega)}
			\le c_{\eta}t^{\frac12-\frac{\alpha+\eta}4} \sqrt m[\Theta_t(m)]^{\frac1m}|x_2-x_1|^{\frac\eta2}\,.
		\end{align}
		For each $s>0$, we set
		\begin{equation*}
			M_s=\sup_{x\in\RR^\ell}\lt\|\frac{\Z(x_1;s,x)}{p_s(x-x_1)}-\frac{\Z(x_2;s,x)}{p_s(x-x_2)}\rt\|_{L^m(\Omega)}\,.
		\end{equation*}
		It follows from Lemma \ref{lem:Wm} that
		\begin{align*}
			&\lt\|\int_0^t\int_{\RR^\ell}p_{\frac{s(t-s)}t}\lt(y-x_2-\frac st(x-x_2) \rt)
			\lt[\frac{\Z(x_2;s,y)}{p_t(y-x_2)}-\frac{\Z(x_1;s,y)}{p_t(y-x_1)}\rt]W(ds,dy)\rt\|_{L^m(\Omega)}
			\\&\le c \sqrt{m}\lt( \int_0^t \lt\|p_{\frac{s(t-s)}t}\lt(\cdot-x_2-\frac st(x-x_2) \rt)\rt\|^2_{\HH_{0}}M_s^2ds\rt)^{\frac12}
			\\&=c\sqrt{m}\lt(\int_0^t \lt(\frac{s(t-s)}t \rt)^{-\frac \alpha2}M_s^2 ds \rt)^{\frac12}\,,
		\end{align*}
		where $c$ is some constant. Applying these estimates in \eqref{tmp:Z1} yields
		\begin{align*}
			M_t\le c_ \eta t^{\frac12-\frac{\alpha+\eta}2} \sqrt m[\Theta_t(m)]^{\frac1m}|x_2-x_1|^{\frac\eta2}+c\sqrt{m}\lt(\int_0^t \lt(\frac{s(t-s)}t \rt)^{-\frac \alpha2}M_s^2 ds \rt)^{\frac12}\,.
		\end{align*}
		We now apply Lemma \ref{lem:gronwall} to get
		\begin{equation*}
		 	M_t\lesssim_t \sqrt m[\Theta_t(m)]^{\frac1m}|x_2-x_1|^{\frac\eta2}e^{c m^{\frac{2}{2-\alpha}}}\,,
		\end{equation*} 
		which is exactly \eqref{tmp:estZ1}.
		
		To complete the proof of the estimate \eqref{eqn:hder}. Fix $t>0$ and $x_1,x_2,y_1,y_2\in\RR^\ell$. Observe that
		\begin{align*}
			&\frac{\Z(x_1;t,y_1)}{p_t(y_1-x_1)}-\frac{\Z(x_2;t,y_2)}{p_t(y_2-x_2)}
			\\&=\int_0^t\int_{\RR^\ell} g(s,y) \frac{\Z(x_1;s,y)}{p_s(y-x_1)} W(ds,dy)
			\\&\quad+\int_0^t\int_{\RR^\ell}p_{\frac{s(t-s)}t}\lt(y-x_2-\frac st(y_2-x_2) \rt)
			\lt[\frac{\Z(x_2;s,y)}{p_t(y-x_2)}-\frac{\Z(x_1;s,y)}{p_t(y-x_1)}\rt]W(ds,dy)
			\\&=I_1+I_2\,,
		\end{align*}
		where
		\begin{equation*}
			g(s,y)=p_{\frac{s(t-s)}{t}} \left(y-x_1-\frac{s}{t}(y_1-x_1)\right) - p_{\frac{s(t-s)}{t}}\left(y-x_2-\frac{s}{t}(y_2-x_2)\right)\,.
		\end{equation*}
		Similar to \eqref{tmp:extZ2}, we have
		\begin{equation}\label{eq:I1}
			\lt\|I_1\rt\|_{L^m(\Omega)}
			\lesssim_t\sqrt{m} [\Theta_t(m)]^{\frac1m}(|x_2-x_1|+|y_2-y_1|)^{\frac \eta2} \,. 
		\end{equation}
		$I_2$ can be estimated using Lemma \ref{lem:Wm} and \eqref{tmp:estZ1}
		\begin{align*}
			\|I_2\|_{L^m(\Omega)}\lesssim_t \sqrt{m} [\Theta_t(m)]^{\frac1m}|x_2-x_1|^{\frac \eta2} e^{c m ^{\frac{2}{2-\alpha}}}\,. 
		\end{align*}
		Hence, we have shown
		\begin{equation*}
			\lt\|\frac{\Z(x_1;t,y_1)}{p_t(y_1-x_1)}-\frac{\Z(x_2;t,y_2)}{p_t(y_2-x_2)}\rt\|_{L^m(\Omega)}\lesssim_t\sqrt{m} [\Theta_t(m)]^{\frac1m}(|x_2-x_1|+|y_2-y_1|)^{\frac \eta2} e^{cm^{\frac{2}{2-\alpha}}} \,.
		\end{equation*}
		At this point, the estimate \eqref{eqn:hder} follows from the Garsia-Rodemich-Rumsey inequality (cf. \cite{garsiarodemich}).
		
		The proof of \eqref{eqn:hder y} is simpler. Actually, by writing 
		\begin{align*}
		&\frac{\Z(x;t,y_1)}{p_t(y_1-x)}-\frac{\Z(x;t,y_2)}{p_t(y_2-x)}\\
				=&\int_0^t\int_{\RR^\ell} \left(p_{\frac{s(t-s)}{t}} \left(y-x-\frac{s}{t}(y_1-x)\right) - p_{\frac{s(t-s)}{t}}\left(y-x-\frac{s}{t}(y_2-x)\right) \right) \frac{\Z(x;s,y)}{p_s(y-x)} W(ds,dy)\,,
		\end{align*}	
		we get an estimate for $\|\frac{\Z(x;t,y_1)}{p_t(y_1-x)}-\frac{\Z(x;t,y_2)}{p_t(y_2-x)}\|_{L^m(\Omega)}$  as in \eqref{eq:I1}. The estimate 	\eqref{eqn:hder y} again follows from the Garsia-Rodemich-Rumsey inequality (cf. \cite{garsiarodemich}). We omit the details. 	
	\end{proof}

In proving \eqref{lim:H2ep}, we need to handle the asymptotic of $\sup_{\epsilon<1}\sup_ {x\in K,|y|\le R} \frac{\Z_{\epsilon}(x;t,y)}{p_t(y-x)}$, thus we write down the H\"older continuity result for $\frac{\Z_{\epsilon}(x;t,y)}{p_t(y-x)}$ with respect to $\epsilon,x,y$. The proof is similar with Proposition \ref{prop:hder} and is left to the reader. 
\begin{proposition}	
\label{prop:hlder epsilon} 
	Assuming \ref{Cov:bounded} or \ref{Cov:Dalang}. There exists a constant $\eta\in(0,1)$ such that for every compact set $K$ and every integer $m\ge2$, 
	\begin{equation}\label{eqn:hder epsilon}
		\sup_{w\in\RR^\ell} \lt\|\sup_{\substack{x_1,x_2\in K, y\in B(w,1)\\ \epsilon, \epsilon' \in (0,1]}}
		\frac{\lt|\frac{\Z_{\epsilon}(x_1;t,y)}{p_t(y-x_1)}-\frac{\Z_{\epsilon'}(x_2;t,y)}{p_t(y-x_2)}\rt|}{(|x_2-x_1|+ |\epsilon- \epsilon'|)^{\eta}} \rt\|_{L^m(\Omega)}\le c_K(t) m^{\frac12}[\Theta_t(m)]^{\frac1m}e^{cm^{\frac{2}{2-\bar{\alpha}}}}\,,
	\end{equation} 
	and
	\begin{equation}\label{eqn:hder y epsilon}
		\sup_{w, x\in\RR^\ell;\epsilon\le1} \lt\|\sup_{ y_1,y_2\in B(w,1)}
		\frac{\lt|\frac{\Z_{\epsilon}(x;t,y_1)}{p_t(y_1-x)}-\frac{\Z_{\epsilon}(x;t,y_2)}{p_t(y_2-x)}\rt|}{|y_2-y_1|^{\eta}} \rt\|_{L^m(\Omega)}\le c_K(t) m^{\frac12}[\Theta_t(m)]^{\frac1m}\,.
	\end{equation} 	
	In the above, the constant $c$ depends only on $\bar{\alpha}$ and $\eta$ and $c_K(t)$ depends only on $K,t, \eta$.
\end{proposition}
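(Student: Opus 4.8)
The plan is to follow the proof of Proposition~\ref{prop:hder} for the increments in the variables $x$ and $y$, and the proof of Proposition~\ref{prop:ZZep} for the increment in $\epsilon$, and then to conclude by the Garsia--Rodemich--Rumsey inequality applied on the compact set $K\times B(w,1)\times[0,1]$ (recall that $\Z_0=\Z$ by Proposition~\ref{prop:ZZep}). The single observation that renders all the estimates uniform over $\epsilon\in(0,1]$ is that $\gamma_\epsilon$ has spectral measure $e^{-2\epsilon|\xi|^2}\mu(\xi)$ with $e^{-2\epsilon|\xi|^2}\le1$, so any bound produced by Lemma~\ref{lem:Wm} or Lemma~\ref{lem:pa} for the noise $W_\epsilon$ is dominated by the corresponding bound for $W$, with a constant independent of $\epsilon$. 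In particular, the comparison argument from the proof of Proposition~\ref{prop:ZZep} gives the uniform moment bound
\begin{equation*}
	\sup_{\epsilon\in(0,1]}\ \sup_{s\le t,\,y\in\RR^\ell}\lt\|\frac{\Z_\epsilon(x;s,y)}{p_s(y-x)}\rt\|_{L^m(\Omega)}\le[\Theta_t(m)]^{\frac1m}\,,
\end{equation*}
and $\Z_\epsilon(x;\cdot,\cdot)$ satisfies the analogue of \eqref{eqn:Z} with $W$ replaced by $W_\epsilon$, which in normalized form reads
\begin{equation*}
	\frac{\Z_\epsilon(x;t,y)}{p_t(y-x)}=1+\int_0^t\int_{\RR^\ell}p_{\frac{s(t-s)}t}\lt(z-x-\frac st(y-x)\rt)\frac{\Z_\epsilon(x;s,z)}{p_s(z-x)}W_\epsilon(ds,dz)\,.
\end{equation*}

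For the increments in $x$ and $y$ one simply repeats the computation in the proof of Proposition~\ref{prop:hder} with $\gamma$ replaced by $\gamma_\epsilon$: the stochastic integral representations of $\frac{\Z_\epsilon(x_1;t,y_1)}{p_t(y_1-x_1)}-\frac{\Z_\epsilon(x_2;t,y_2)}{p_t(y_2-x_2)}$ have the same form, Lemma~\ref{lem:pa} applied to $\gamma_\epsilon$ gives the same increment bounds uniformly in $\epsilon$, and Lemma~\ref{lem:gronwall} supplies a Gronwall constant depending only on $\bar{\alpha}$. This already yields \eqref{eqn:hder y epsilon} and provides the $(x,y)$-part of the joint estimate needed for \eqref{eqn:hder epsilon}.

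For the increment in $\epsilon$, fix $x,t$ and $0<\epsilon'<\epsilon\le1$. Subtracting the mild equations for $\Z_\epsilon$ and $\Z_{\epsilon'}$ as in the proof of Proposition~\ref{prop:ZZep} gives
\begin{equation*}
	\frac{\Z_\epsilon(x;t,y)}{p_t(y-x)}-\frac{\Z_{\epsilon'}(x;t,y)}{p_t(y-x)}=I_1+I_2\,,
\end{equation*}
where $I_1$ is the stochastic integral against $W_\epsilon$ of $p_{\frac{s(t-s)}t}(\cdot-x-\frac st(y-x))$ multiplied by the difference $[\Z_\epsilon(x;s,\cdot)-\Z_{\epsilon'}(x;s,\cdot)]/p_s(\cdot-x)$, and $I_2$ is the stochastic integral of $p_{\frac{s(t-s)}t}(\cdot-x-\frac st(y-x))\,\Z_{\epsilon'}(x;s,\cdot)/p_s(\cdot-x)$ against the noise $W_\epsilon-W_{\epsilon'}$, whose spectral density is $(e^{-\epsilon|\xi|^2}-e^{-\epsilon'|\xi|^2})^2\mu(\xi)$. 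Writing $M_s$ for the supremum over $y$ of $\|[\Z_\epsilon(x;s,y)-\Z_{\epsilon'}(x;s,y)]/p_s(y-x)\|_{L^m(\Omega)}$, Lemma~\ref{lem:Wm} bounds $\|I_1\|_{L^m(\Omega)}$ by $c\sqrt m\big(\int_0^t(\frac{s(t-s)}t)^{-\bar{\alpha}/2}M_s^2\,ds\big)^{1/2}$, exactly as before. For $I_2$, under \ref{Cov:Dalang} one argues as in the proof of Proposition~\ref{prop:ZZep}: from $(e^{-\epsilon|\xi|^2}-e^{-\epsilon'|\xi|^2})^2\le(1-e^{-|\epsilon-\epsilon'||\xi|^2})^2\le|\epsilon-\epsilon'|^{\beta/2}|\xi|^\beta$ for a suitably small $\beta>0$, together with the scaling bound $\int_0^t\int_{\RR^\ell}e^{-\frac{2s(t-s)}t|\xi|^2}|\xi|^\beta\mu(\xi)\,d\xi\,ds\lesssim t^{(2-\alpha-\beta)/2}$ and the uniform moment bound above, one obtains $\|I_2\|_{L^m(\Omega)}\lesssim|\epsilon-\epsilon'|^{\beta/4}t^{(2-\alpha-\beta)/4}\sqrt m[\Theta_t(m)]^{1/m}$. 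Under \ref{Cov:bounded}, $W_\epsilon-W_{\epsilon'}$ has bounded covariance, and using $(e^{-\epsilon|\xi|^2}-e^{-\epsilon'|\xi|^2})^2\le1-e^{-|\epsilon-\epsilon'||\xi|^2}$ together with the identity $\gamma_\delta=p_{c\delta}*\gamma$ (for a numerical constant $c$) and the $\kappa$-H\"older continuity of $\gamma$ at the origin, one gets $(2\pi)^{-\ell}\int(e^{-\epsilon|\xi|^2}-e^{-\epsilon'|\xi|^2})^2\mu(\xi)\,d\xi\le\gamma(0)-\gamma_{|\epsilon-\epsilon'|/2}(0)\lesssim|\epsilon-\epsilon'|^{\kappa/2}$, so the \ref{Cov:bounded}-form of Lemma~\ref{lem:Wm} yields $\|I_2\|_{L^m(\Omega)}\lesssim\sqrt{mt}\,|\epsilon-\epsilon'|^{\kappa/4}[\Theta_t(m)]^{1/m}$. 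In both cases, feeding these bounds into the recursion for $M_t$ and applying Lemma~\ref{lem:gronwall} gives $M_t\lesssim_t|\epsilon-\epsilon'|^{\theta}\sqrt m[\Theta_t(m)]^{1/m}e^{cm^{2/(2-\bar{\alpha})}}$ for some $\theta>0$.

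Combining the $(x,y)$- and $\epsilon$-increment estimates by the triangle inequality produces, for some $\eta\in(0,1)$,
\begin{equation*}
	\lt\|\frac{\Z_\epsilon(x_1;t,y_1)}{p_t(y_1-x_1)}-\frac{\Z_{\epsilon'}(x_2;t,y_2)}{p_t(y_2-x_2)}\rt\|_{L^m(\Omega)}\lesssim_t\sqrt m[\Theta_t(m)]^{\frac1m}\big(|x_1-x_2|+|y_1-y_2|+|\epsilon-\epsilon'|\big)^{\eta}e^{cm^{\frac2{2-\bar{\alpha}}}}
\end{equation*}
for $x_1,x_2\in K$, $y_1,y_2\in B(w,1)$ and $\epsilon,\epsilon'\in(0,1]$, and the Garsia--Rodemich--Rumsey inequality on $K\times B(w,1)\times[0,1]$ upgrades this to \eqref{eqn:hder epsilon}; the same argument applied only in the $y$-variable, uniformly over $\epsilon$, gives \eqref{eqn:hder y epsilon}. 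I expect the main difficulty to be ensuring all the constants stay genuinely uniform in $\epsilon$ as $\epsilon\downarrow0$: under \ref{Cov:Dalang} the quantity $\gamma_\epsilon(0)=\epsilon^{-\alpha/2}\gamma_1(0)$ blows up, so one must never use the bounded-covariance form of Lemmas~\ref{lem:Wm} and \ref{lem:pa} with the constant $\gamma_\epsilon(0)$, but always the Dalang-type form with the spectral measure $e^{-2\epsilon|\xi|^2}\mu(\xi)$, exploiting $e^{-2\epsilon|\xi|^2}\le1$ to dominate everything by the $\epsilon=0$ estimates. A secondary and routine point is checking that the Garsia--Rodemich--Rumsey inequality applies on the product parameter space with the additional one-dimensional $\epsilon$-coordinate, which causes no trouble since the space is of finite dimension.
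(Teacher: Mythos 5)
Your proposal is correct and follows exactly the route the paper intends: the paper omits this proof, stating only that it is "similar with Proposition \ref{prop:hder}", and your argument carries out that plan, handling the $(x,y)$-increments verbatim with $\gamma_\epsilon$ (uniformly in $\epsilon$ via $e^{-2\epsilon|\xi|^2}\mu\le\mu$) and the $\epsilon$-increment by the same decomposition and spectral bound on $W_\epsilon-W_{\epsilon'}$ used in Proposition \ref{prop:ZZep}. Your closing remarks correctly identify the only delicate points (never invoking the bounded-covariance form with $\gamma_\epsilon(0)$ under \ref{Cov:Dalang}, and using the single-integral representation for the pure $y$-increment so that \eqref{eqn:hder y epsilon} carries no factor $e^{cm^{2/(2-\bar{\alpha})}}$).
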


\section{Spatial asymptotic}\label{sec:upper}
In this section we study the asymptotic 
of
\begin{equation*}
 	\sup_{|y|\le R}\frac{u(t,y)}{p_t*u_0(y)}
\end{equation*}
as described in Theorems \ref{thm:H1Z}, \ref{thm:H23Z} and \ref{thm:uep}. In what follows, we denote
\begin{equation}
 	a=\frac{2}{4-\bar \alpha}
 \end{equation} 
where we recall that $\bar{\alpha}$ is defined in \eqref{eq:unify notation}. Since $0\le\bar\alpha<2$, $a$ ranges inside the interval $[1/2,1)$. Because $R\mapsto \sup_{|y|\le R}\frac{u(t,y)}{p_t*u_0(y)}$ is monotone, it suffices to show these results along lattice sequence $R\in\{e^n\}_{n\ge1}$.

\subsection{The upper bound}
This subsection is devoted to the proof of upper bounds in Theorems \ref{thm:H1Z} and \ref{thm:H23Z} by combining the moment asymptotic bounds and the regularity estimates obtained in Section \ref{sec:moment_asymptotic_and_regularity}. 
We also recall that $\Theta_t(m)$ is defined in \eqref{def:Thetam}. {Propositions \ref{prop:H1m}, \ref{prop:H2m} together with \eqref{ineq:T1} imply}
\begin{equation}\label{m.theta}
	\limsup_{m\to\infty}m^{-\frac{4- \bar\alpha}{2- \bar\alpha}}\log \Theta_t(m)\le \frac t2\cee\,,
\end{equation}
where $\cee$ is defined in \eqref{eq:unify notation}.
The following result gives an upper bound for spatial asymptotic of $\Z(x;\cdot,\cdot)$. 
	\begin{theorem}\label{thm:Z up}
		For every compact set $K$, we have
		\begin{equation}\label{Zup}
			\limsup_{n\to\infty}n^{-a}\sup_{x\in K,|y|\le e^n}\lt(\log \Z(x;t,y)+\frac{|y-x|^2}{2t} \rt)\le \frac{4- \bar\alpha}2\ell^{\frac{2}{4- \bar\alpha}}\lt(\frac{\cee}{2- \bar\alpha}t\rt)^{1-a}
		\end{equation}
	\end{theorem}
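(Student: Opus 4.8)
Write $\wt\Z(x;t,y):=\Z(x;t,y)/p_t(y-x)$, so that $\log\Z(x;t,y)+\tfrac{|y-x|^2}{2t}=\log\wt\Z(x;t,y)-\tfrac\ell2\log(2\pi t)$; since the additive constant is negligible after dividing by $n^a\to\infty$, it suffices to bound $\sup_{x\in K,|y|\le e^n}\log\wt\Z(x;t,y)$ by the right-hand side of \eqref{Zup}. (If $\wt\Z(x;t,y)=0$ the logarithm is $-\infty$, which is harmless for an upper bound.) The plan is a standard Borel--Cantelli/discretisation argument, quantitatively matched so as to recover the sharp constant. Denote by $\lambda^\star$ the right-hand side of \eqref{Zup}. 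Fix $\lambda>\lambda^\star$. Set $m_n=\lceil\kappa n^{1-a}\rceil$ for a $\kappa=\kappa(\lambda)>0$ chosen below, $\lambda_n=\exp(\lambda n^a)$, and a mesh $\delta_n=\exp(-C_\star n^a)$ with $C_\star$ large. Cover $\{|y|\le e^n\}$ by $O(e^{\ell n})$ unit balls $B(w_i,1)$, and inside each of them as well as inside the fixed compact $K$ place a $\delta_n$-net; the total number $N_n$ of grid points $(x_k,y_j)$ then satisfies $\log N_n\le\ell n+O(n^a)$, because $\delta_n^{-1}$ is sub-exponential in $n$ --- keeping $\log N_n=\ell n(1+o(1))$ is exactly what preserves the constant.

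For the grid values I would use Chebyshev together with \eqref{ineq:T1}: for each $(x_k,y_j)$,
\begin{equation*}
\PP\big(\wt\Z(x_k;t,y_j)>\lambda_n\big)\le\lambda_n^{-m_n}\,\EE\,\wt\Z(x_k;t,y_j)^{m_n}\le\lambda_n^{-m_n}\Theta_t(m_n)\,.
\end{equation*}
A union bound over the $N_n$ grid points, combined with \eqref{m.theta} (which gives $\log\Theta_t(m_n)\le(\tfrac t2\cee+o(1))m_n^{(4-\bar\alpha)/(2-\bar\alpha)}$) and the identity $(1-a)\cdot\tfrac{4-\bar\alpha}{2-\bar\alpha}=1$, yields
\begin{equation*}
\log\PP\Big(\max_{k,j}\wt\Z(x_k;t,y_j)>\lambda_n\Big)\le n\Big(\ell-\lambda\kappa+\tfrac t2\cee\,\kappa^{\frac{4-\bar\alpha}{2-\bar\alpha}}+o(1)\Big)\,.
\end{equation*}
For the oscillation inside each unit ball, Proposition \ref{prop:hder} (the bound \eqref{eqn:hder} for the $x$-increments over $K$ and \eqref{eqn:hder y} for the $y$-increments) gives, for each $i$, a random variable $G_i$ with $\sup_{x\in K,y\in B(w_i,1)}\wt\Z(x;t,y)\le\max_{k,j}\wt\Z(x_k;t,y_j)+\delta_n^{\eta}G_i$ and $\|G_i\|_{L^{m_n}}\le c_K(t)m_n^{1/2}[\Theta_t(m_n)]^{1/m_n}e^{cm_n^{2/(2-\bar\alpha)}}\le e^{Cn^a}$ for some $C=C(\lambda)$, using $(1-a)\cdot\tfrac{2}{2-\bar\alpha}=a$ and \eqref{m.theta}. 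Choosing $C_\star>C/\eta$ forces $\delta_n^{\eta}\|G_i\|_{L^{m_n}}\le1$, so a further Chebyshev/union bound over the $O(e^{\ell n})$ balls gives $\log\PP(\max_i\delta_n^{\eta}G_i>\lambda_n)\le n(\ell-\lambda\kappa+o(1))$.

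It remains to choose $\kappa$. One maximises $\kappa\mapsto\lambda\kappa-\tfrac t2\cee\,\kappa^{(4-\bar\alpha)/(2-\bar\alpha)}$; an elementary computation (optimising, then using $\tfrac{\theta}{\theta-1}=\tfrac1a$ and $\tfrac{a}{\theta-1}=1-a$ with $\theta=\tfrac{4-\bar\alpha}{2-\bar\alpha}$) shows that the maximal value equals $\ell$ exactly when $\lambda=\lambda^\star=\tfrac{4-\bar\alpha}2\ell^{2/(4-\bar\alpha)}(\tfrac{\cee}{2-\bar\alpha}t)^{1-a}$. Hence for every $\lambda>\lambda^\star$ one can pick $\kappa$ so that $\lambda\kappa-\tfrac t2\cee\,\kappa^{(4-\bar\alpha)/(2-\bar\alpha)}>\ell$ with strict room to spare (absorbing the $o(1)$'s above), so both probabilities are summable in $n$. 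Borel--Cantelli then gives, almost surely, $\sup_{x\in K,|y|\le e^n}\wt\Z(x;t,y)\le2\lambda_n$ for all large $n$, hence $\limsup_n n^{-a}\sup_{x\in K,|y|\le e^n}\log\wt\Z(x;t,y)\le\lambda$ a.s.; letting $\lambda\downarrow\lambda^\star$ along a countable sequence finishes the proof. (The reduction to $R=e^n$ is the monotonicity remark at the start of the section.)

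The main obstacle I anticipate is handling the oscillations uniformly over the exponentially large region $\{|y|\le e^n\}$: the Hölder constants from Proposition \ref{prop:hder} blow up like $e^{cn^a}$ with $m_n$, so the mesh must be taken as small as $e^{-C_\star n^a}$, and one must verify that this does not push $\log N_n$ above $\ell n(1+o(1))$ --- if it did, the constant in \eqref{Zup} would be worsened. Everything hinges on the fact that the three competing quantities $\log N_n\sim\ell n$, $m_n\log\lambda_n\sim\lambda\kappa n$ and $\log\Theta_t(m_n)\sim\tfrac t2\cee\,\kappa^{(4-\bar\alpha)/(2-\bar\alpha)}n$ are all of the same order $n$; the scaling $m_n\asymp n^{1-a}$ is precisely the one making this happen, and it is what forces both the exponent $a$ and the value $\lambda^\star$.
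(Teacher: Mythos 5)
Your proof is correct and follows essentially the same route as the paper: Chebyshev with $m\asymp n^{1-a}$, the moment bound \eqref{ineq:T1}--\eqref{m.theta}, the H\"older estimates of Proposition \ref{prop:hder} to control oscillations, a union bound over $O(e^{\ell n})$ regions, Borel--Cantelli, and the same optimization in $\kappa$ (the paper's $\beta$) yielding the constant $\lambda^{\star}$. The only difference is cosmetic: the paper uses a mesh $e^{-n^{b}}$ with $b\in(a,1)$ for $x$ and handles the $y$-oscillation over whole unit balls via the $m^{m/2}$ factor, whereas you take a uniform $e^{-C_{\star}n^{a}}$ net in both variables; both choices keep $\log N_n=\ell n(1+o(1))$ and preserve the sharp constant.
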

	\begin{proof}
	We begin by noting that according Remark \ref{rmk:nonnegative}, $\Z(x;t,y)$ is non negative a.s. for each $x,y,t$. Let $t$ be fixed and put
	\begin{equation*}
		\K(x,y)=\frac{\Z(x;t,y)}{p_t(y-x)}\,,
	\end{equation*}
	where we have omitted the dependence on $t$.
	For every $n>1$ and every $\lambda>0$, we consider the probability
	\begin{equation*}
		P_n:=P\left(\sup_{x\in K,|y|\le e^n}\log\K(x,y)> \lambda n^a\right)\,.
	\end{equation*}
	Let $b$ be a fixed number such that $a < b <1$. We can find the points $x_i$, $i = 1, \dots, M_{n}$, such that $K \subset \cup_{i=1}^{M_n}B(x_i, e^{-n^b})$ and $M_n\lesssim e^{\ell n^b}$. In addition, by partitioning the ball $B(0,e^n)$ into unit balls, we see that $P_n$ is at most
	\begin{equation*}
		c(\ell) e^{\ell n+\ell n^b}\sup_{w\in\RR^\ell, x_i}P\left(\sup_{x\in B(x_i, e^{-n^b}), y\in B(w,1)}\K(x,y)> e^{\lambda n^a}\right)\,. 
	\end{equation*}
	Applying Chebychev inequality, we see that
	\begin{align*}
		P\left(\sup_{x\in B(x_i, e^{-n^b}),y\in B(w,1)}\K(x,y)> e^{\lambda n^a}\right)
		\le e^{-\lambda m n^a}\lt\|\sup_{x\in B(x_i, e^{-n^b}),y\in B(w,1)}\K(x,y) \rt\|_{L^m(\Omega)}^m\,.
	\end{align*}
	The above $m$-th moment is estimated by triangle inequality
	\begin{align*}
	\lt\|\sup_{{x\in B(x_i, e^{-n^b})},y\in B(w,1)}\K(x,y) \rt\|_{L^m(\Omega)}^m
	&\leq 3^m \left\|\sup_{{x\in B(x_i, e^{-n^b})},y\in B(w,1)} \left|\K(x,y)-\K(x_i,y)\right|\right\|_{L^m(\Omega)}^m\\
	&\quad+ 3^m \left\|\sup_{y\in B(w,1)} \left|\K(x_i,y)-\K(x_i,w)\right|\right\|_{L^m(\Omega)}^m\\
	& \quad + 3^m \left\|\K(x_i,w)\right\|_{L^m(\Omega)}^m\\
	&:=3^m( I_1 + I_2 + I_3)\,.
	\end{align*}
Using Proposition \ref{prop:hder} and \eqref{ineq:T1}, we see that 
\begin{equation}
I_1\lesssim  e^{-\eta m n^b+ c m ^{\frac1{1-a}}} \Theta_t(m)\,, \quad I_2 \lesssim   m^{\frac m2} \Theta_t(m)\,, \quad I_3\le  \Theta_t(m)\,.
\end{equation}
Altogether, we have
\begin{equation*}
P_n \lesssim 3^me^{\ell n^b+\ell n-\lambda m n^a}  \left(e^{-\eta m n^b+c m ^{\frac1{1-a} }} \Theta_t(m)+ m^{\frac m2} \Theta_t(m) \right)\,.
\end{equation*}
	For each $\beta>0$, we choose $m= \lfloor\beta n^{1-a} \rfloor$. In addition, for every fixed $\epsilon>0$,  \eqref{m.theta} yields
	\begin{equation*}
		\log\Theta_t(\lfloor\beta n^{1-a}\rfloor)\le \lt(\frac t2\cee+\epsilon\rt) \beta^{\frac{1}{1-a}}n
	\end{equation*}
	for all $n$ sufficiently large. It follows that
	\begin{align}\label{tmp.BCup}
		\sum_{n=1}^\infty P\left(\sup_{x\in K,|y|\le e^n}\log\K(x,y)> \lambda n^{a}\right) \lesssim S_1+S_2\,,
	\end{align} 
	where
	\begin{align*}
		S_1&=\sum_{n=1}^\infty\exp \left\{ \ell n^b+\beta(\log 3) n^{1-a} + (\ell- \lambda \beta+c \beta^{\frac1{1-a}})n   - \eta \beta n^{1-a+b}\right\}\,,
		\\S_2&=\sum_{n=1}^\infty\exp \left\{ -\ell n^b + n \ell - \lambda \beta n + \lt(\frac t2\cee+\epsilon\rt) \beta^{\frac1{1-a}}n \right\}\,.
	\end{align*}
	Since $1-a+b>1$, the term $e^{-\eta\beta n^{1-a+b}}$ is dominant, and hence,  $S_1$ is finite for every $\lambda,\beta>0$. To ensure the convergence of $S_2$, we choose $\lambda$ such that
	\begin{equation}\label{eqn:lambda}
		\lambda> \ell \beta^{-1}+(\frac t2\cee+\epsilon) \beta^{\frac a{1-a}}\,.
	\end{equation}
	It follows that the series on the right hand side of \eqref{tmp.BCup} is finite. By Borel-Cantelli lemma, we have almost surely
	\begin{equation*}
		\limsup_{n\to\infty} n^{-a}\sup_{x\in K,|y|\le e^n}\log\K(x,y)\le \lambda\,.
	\end{equation*}
	Evidently, the best choice for $\lambda$ is 
	\begin{align}
		\lambda_0:&=\inf_{\epsilon>0,\beta>0}\lt\{\ell \beta^{-1}+(\frac t2\cee+\epsilon) \beta^{\frac a{1-a}}\rt\}
		\notag\\&=\frac{4- \bar\alpha}2\ell^{\frac2{4- \bar\alpha}} \lt(\frac{t\cee}{2- \bar\alpha} \rt)^{\frac{2- \bar\alpha}{4- \bar\alpha}}\,,
	\end{align}
	which yields \eqref{Zup}.
	\end{proof}
	\begin{remark}\label{rmk: Z epsilon up}
	Using Proposition \ref{prop:hlder epsilon} and analogous arguments in Theorem \ref{thm:Z up}, we can show that 
	\begin{equation}
			\limsup_{R\to\infty}(\log R)^{-\frac2{4- \bar\alpha}}\sup_{x\in K, \epsilon \in (0,1], |y|\le R}\lt(\log \Z_{\epsilon}(x;t,y)+\frac{|y-x|^2}{2t} \rt)\le \frac{4- \bar\alpha}2\ell^{\frac2{4-\bar \alpha}}\lt(\frac{\cee}{2- \bar\alpha}t\rt)^{\frac{2- \bar\alpha}{4- \bar\alpha}}\,.
		\end{equation}
		We omit the details. 	
	\end{remark}

\subsection{The lower bound} 
\label{sub:low}
	We now focus on the lower bound of \eqref{coj:h1} and \eqref{lim:H2ep}. To start with, we explain an issue of using the localization procedure as in \cites{MR3474477,MR3098071}. In these papers, a localized version of the equation \eqref{eqn:SHE} is introduced, i.e.
	\begin{equation}\label{eq:U local}
	U^{\beta}(t,x) = 1 + \int_0^t \int_{|y-x|\leq \beta \sqrt{t}} p_{t-s}(x-y) U^{\beta}(s,y)W(ds,dy)\,,
	\end{equation}
for some $\beta>0$. For fixed $t$ and $\beta$ sufficiently large, $\sup_{|x|\le R} U^\beta(t,x)$ gives a good approximation for $\sup_{|x|\le R}u(t,x)$ as $R\to\infty$. In our situation, suppose for instance that $u_0=\delta(\cdot-x_0)$, the random field $\frac{\Z(x_0; t,x)}{p_t(x-x_0)}$ satisfies the equation 
\begin{equation}\label{tmp610}
\frac{\Z(x_0; t,x)}{p_{t}(x-x_0)} = 1 + \int_0^t \int_{\RR^{\ell}} p_{\frac{s(t-s)}{t}}\left( y-x_0-\frac{s}{t}(x-x_0) \right) \frac{\Z(x_0; s,y)}{p_s(y-x_0)}W(ds,dy)\,.
\end{equation}	
Since the  kernel $p_{\frac{s(t-s)}{t}}\left( y-x_0-\frac{s}{t}(x-x_0)\right)$ now involves $s$ and $t$ with $s$ moving from $0$ to $t$, the mass concentration of the stochastic integration on the right-hand side of \eqref{tmp610} varies and depends on $s$. We are not able to find a fixed localized integration domain similar as $\{y:|y-x|\leq \beta\sqrt{t}\}$. To get around this difficulty, we propose an alternative result (Theorem \ref{thm:uep}) which is about the regularized version of $\Z$, i.e., $\Z_{\epsilon}$. 
	To handle the spatial asymptotic of $\Z_ {\epsilon}$, we rely on the Feynman-Kac representation \eqref{eqn:FKZ} and adopt an argument developed by Xia Chen in \cite{MR3474477} with an additional scaling procedure. 

	Hereafter, $t$ and $\epsilon$ are  fixed positive constants, $n$ is the driving parameter which tends to infinity, 
		\begin{equation}\label{eq: ep R}
			\epsilon_n=
			\lt\{
			\begin{array}{ll}
				0& \text{if \ref{Cov:bounded} holds}
			 	\\\epsilon\lt(\frac t{n} \rt)^a &\text{if \ref{Cov:Dalang} holds}\,.
			\end{array} 
			\rt.
		\end{equation} 
		Let $y_1,\dots,y_N$ be $N$ points in $B(0,e^n)$ and $d$ be a positive number such that
		\begin{equation}\label{y.pts}
			N\lesssim e^{\ell n}\quad \mbox{and}\quad |y_j-y_k|\ge d\quad\forall j\neq k\,.
		\end{equation}
		Under \ref{Cov:bounded}, $d$ is chosen to be sufficiently large, depending on the shape of $\gamma$, while under \ref{Cov:Dalang}, we can simply choose $d=1$. See Lemma \ref{lem:H1} below for more details.
	\begin{theorem}\label{thm:low} For every $x_0\in\RR^\ell$
		\begin{equation}
			\liminf_{n\to\infty}n^{-a}\sup_{|y|\le e^n}\sup_{\epsilon\in(0,1)} \log \frac{\Z_\epsilon(x_0;t,y)}{p_t(y-x_0)}\ge \frac{4-\bar \alpha}2\ell^{\frac{2}{4-\bar \alpha}}\lt(\frac{\cee}{2-\bar \alpha}t \rt)^{\frac{2-\bar \alpha}{4-\bar \alpha}}
		\end{equation}
	\end{theorem}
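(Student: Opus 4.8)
The plan is to discard the supremum over $\epsilon$ and argue only at the single regularization level $\epsilon=\epsilon_n$ from \eqref{eq: ep R}. Write $v_n(y):=\Z_{\epsilon_n}(x_0;t,y)/p_t(y-x_0)$; since $\epsilon_n\in(0,1)$ for $n$ large, it suffices to prove that for every $\theta\in(0,1)$
\begin{equation*}
\liminf_{n\to\infty}n^{-a}\sup_{|y|\le e^n}\log v_n(y)\ \ge\ \theta\,\lambda_0,\qquad
\lambda_0:=\frac{4-\bar\alpha}2\ell^{\frac2{4-\bar\alpha}}\Big(\frac{\cee}{2-\bar\alpha}t\Big)^{\frac{2-\bar\alpha}{4-\bar\alpha}},
\end{equation*}
and then let $\theta\uparrow1$. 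By Remark \ref{rmk:nonnegative} the field $v_n$ is non-negative; since $\gamma_{\epsilon_n}$ satisfies \ref{Cov:bounded}, $v_n$ obeys the Feynman--Kac formula \eqref{eqn:FKZ} and the moment formula \eqref{h1:ZmFK} with $\gamma$ replaced by $\gamma_{\epsilon_n}$, and in particular $\EE v_n(y)=1$. Fix points $y_1,\dots,y_N\in B(0,e^n)$ with $N\asymp e^{\ell n}$ and pairwise distances at least $d$ as in \eqref{y.pts}, where $d$ is furnished by Lemma \ref{lem:H1}. Using monotonicity of $R\mapsto\sup_{|y|\le R}$ one may work along $R=e^n$; by Borel--Cantelli it then suffices to bound $\PP\big(\max_{j\le N}\log v_n(y_j)\le\theta\lambda_0 n^a\big)$ by a summable sequence.

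The heart of the matter is a one-point lower tail estimate: for every $\theta<1$ and $\delta>0$ there is $n_0$ with
\begin{equation*}
\PP\big(\log v_n(y)>\theta\lambda_0 n^a\big)\ \ge\ e^{-(\ell-\delta)n}\qquad\text{for all }n\ge n_0,\ \text{uniformly in }y\in\RR^\ell .
\end{equation*}
This is where the additional scaling operates. From \eqref{h1:ZmFK} with all Dirac masses at $x_0$ one has $\EE v_n(y)^m=\EE_B\exp\{\int_0^t\sum_{1\le j<k\le m}\gamma_{\epsilon_n}(B^j_{0,t}(s)-B^k_{0,t}(s))\,ds\}$. Combining the Brownian bridge scaling $B_{0,\lambda t}(\lambda s)\stackrel{\text{law}}{=}\sqrt\lambda\,B_{0,t}(s)$ with the covariance scaling $\gamma_{\epsilon}(cx)=c^{-\bar\alpha}\gamma_{\epsilon/c^2}(x)$ (a consequence of \ref{con:scaling} and \eqref{eqn:scalgamma}; under \ref{Cov:bounded}, $\bar\alpha=0$ and $\epsilon_n\equiv0$, and the scaling is not needed), the substitution $s\mapsto\epsilon_n s$ rewrites this moment as an exponential functional of Brownian bridges over the long horizon $T_n:=t/\epsilon_n\asymp n^a$ governed by the \emph{fixed} bounded covariance $\gamma_1$ (resp.\ $\gamma$ under \ref{Cov:bounded}). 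Choosing $m=m_n$ coupled to $n$ (so $m_n\asymp n^{1-a}$, the coupling dual to the one optimized in Theorem \ref{thm:Z up}) and invoking a Donsker--Varadhan long-time estimate in the spirit of Lemma \ref{lem:expBB} and \cite{ChPh15}*{Theorem 1.1}, together with the convergence $\cee_H(\gamma_\epsilon)\to\cee_H(\gamma)$ as $\epsilon\downarrow0$, Proposition \ref{prop:ME} and the scaling \eqref{scal:cee}, one obtains $\tfrac1{m_n}\log\EE v_n(y)^{m_n}\ge\theta\lambda_0 n^a$ and the same exponential rate for the $2m_n$-th moment. The probabilistic lower bound of \cite{MR3474477} (built from such moment estimates together with a Chebyshev/Paley--Zygmund argument) then yields the displayed tail bound; the choice $\epsilon_n=\epsilon(t/n)^a$ is precisely the one for which the tail exponent and $\log N$ are both of order $n$, so criticality is attained exactly at $\lambda_0$.

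The second ingredient is decorrelation across the $y_j$. For $j\ne k$, \eqref{h1:ZmFK} gives $\EE[v_n(y_j)v_n(y_k)]=\EE_B\exp\{\int_0^t\gamma_{\epsilon_n}(B^j_{0,t}(s)-B^k_{0,t}(s)+\tfrac{t-s}t(y_j-y_k))\,ds\}$; since $|y_j-y_k|\ge d$, the shift is large except for $s$ near $t$, and splitting the $s$-integral and using boundedness and integrability of $\gamma_{\epsilon_n}$ (expressed in the rescaled variables under \ref{Cov:Dalang}) bounds this by $1+\varrho_n$ with $\varrho_n\to0$ geometrically in $n$, while $\EE v_n(y_j)=1$. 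Feeding the one-point tail bound and this near-independence into the second moment method for $S_n:=\#\{j\le N:\log v_n(y_j)>\theta\lambda_0 n^a\}$ gives $\EE S_n\ge N e^{-(\ell-\delta)n}\to\infty$ and $\operatorname{Var}(S_n)\lesssim \EE S_n+\varrho_n(\EE S_n)^2$, whence $\PP(S_n=0)\le(\EE S_n)^{-1}+\varrho_n$, which is summable. Borel--Cantelli and then $\theta\uparrow1$ finish the proof.

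The main obstacle is the one-point estimate: the scaling must be performed carefully enough that the exponential rate which emerges is \emph{exactly} $\lambda_0$ and not merely a comparable quantity — this is the reason for coupling $\epsilon_n$ to $n$ as in \eqref{eq: ep R} and for invoking the robustness $\cee_H(\gamma_\epsilon)\to\cee_H(\gamma)$ together with Proposition \ref{prop:ME}. A secondary technical point is the decorrelation estimate under \ref{Cov:Dalang}, where $\gamma_{\epsilon_n}(0)=\epsilon_n^{-\alpha/2}\gamma_1(0)\to\infty$, so the near-diagonal part of the $s$-integral must be treated in the rescaled variables rather than by a crude bound. Recovering the full supremum over $\epsilon\in(0,1)$ in Theorem \ref{thm:uep} would in addition require the uniform H\"older estimates of Proposition \ref{prop:hlder epsilon}, but for the lower bound stated here it is enough to evaluate at $\epsilon=\epsilon_n$.
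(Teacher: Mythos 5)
There is a genuine gap, and it sits at the heart of your argument: the one-point lower tail estimate $\PP\bigl(\log v_n(y)>\theta\lambda_0 n^a\bigr)\ge e^{-(\ell-\delta)n}$ cannot be obtained from moment asymptotics by a Chebyshev/Paley--Zygmund argument. Applying Paley--Zygmund to $X=v_n(y)^{m}$ gives $\PP\bigl(X\ge\tfrac12\EE X\bigr)\ge\tfrac14(\EE v_n^{m})^2/\EE v_n^{2m}$, and since $\log\EE v_n^{m}\sim \tfrac t2\cee\, m^{p}$ with $p=\tfrac{4-\bar\alpha}{2-\bar\alpha}>1$ is super-linear in $m$, the ratio costs a factor $\exp\{-(2^{p}-2)\tfrac t2\cee\, m^{p}\}$. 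Optimizing $m=\beta n^{1-a}$ under the constraint that this tail beats $e^{-\ell n}$ yields a level $\tfrac12(2^{p}-2)^{-a}(2\ell)^{a}(t\cee)^{1-a}$, which is strictly smaller than $\lambda_0$ (e.g.\ under \ref{Cov:bounded} it is $\tfrac1{2\sqrt2}\lambda_0$). You also implicitly need the matching \emph{lower} moment asymptotic $\liminf_m m^{-p}\log\EE v_n^{m}\ge\tfrac t2\cee$, which Propositions \ref{prop:H1m} and \ref{prop:H2m} do not provide (they are upper bounds only). Your characterization of the lower bound in \cite{MR3474477} as "moments plus Paley--Zygmund" is therefore not accurate; that argument, like the one in this paper, is a \emph{conditional Gaussian} argument. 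The paper conditions on the Brownian motions in the Feynman--Kac formula \eqref{eqn:FKZ}, so that the exponents $\xi_m(x_0,y_j)$ form a centered Gaussian vector with common conditional variance $S_m^2$; the sharp lower bound for the maximum of $N\asymp e^{\ell n}$ weakly correlated Gaussians (Lemma 4.2 of \cite{MR3178468}, implemented in Lemma \ref{lem:H1}) gives $\max_j\xi_m(x_0,y_j)\ge\lambda\sqrt n\,S_m$ for any $\lambda<\sqrt{2\ell}$ with conditional probability tending to $1$, and the remaining Brownian functional $\EE_B[e^{\lambda\sqrt n S_m};\,\min_j\tau^j\ge t]$ is evaluated by Lemma \ref{lem:expBB}, producing $\cmm(\lambda^2\gamma_\epsilon)$ and hence, via Proposition \ref{prop:ME}, exactly $\lambda_0$ at $\lambda=\sqrt{2\ell}$. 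It is this square-root (standard-deviation) scaling of the Gaussian maximum that manufactures the variational quantity $\cmm$ and the sharp constant; no unconditional moment argument in the paper reproduces it.

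A secondary gap is the decorrelation step. The bound $\EE[v_n(y_j)v_n(y_k)]\le 1+\varrho_n$ does not control $\operatorname{Var}(S_n)$ for your counting variable: for the rare events $A_j=\{v_n(y_j)>e^{\theta\lambda_0 n^a}\}$ with $\PP(A_j)\sim e^{-\ell n}$, a covariance bound on the fields gives essentially no information on $\PP(A_j\cap A_k)-\PP(A_j)\PP(A_k)$, so the inequality $\operatorname{Var}(S_n)\lesssim\EE S_n+\varrho_n(\EE S_n)^2$ does not follow. In the paper the decorrelation \eqref{eq:cov leq var} is a bound on the \emph{conditional covariance of the Gaussian exponents}, which is precisely the hypothesis needed for the Gaussian comparison inequality; it is not a statement about correlations of the solution values themselves. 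Your scaling discussion (the role of $\epsilon_n=\epsilon(t/n)^a$, the horizon $t_n\asymp n^a$, the convergence $\cee_H(\gamma_\epsilon)\to\cee_H(\gamma)$ and Proposition \ref{prop:ME}) is consistent with Step 2 of the paper's proof, but without the conditional Gaussian mechanism the argument as proposed cannot reach the stated constant.
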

\begin{proof} 
	\textbf{Step 1:} 
		Let $m=m_n$ be a natural number such that 
		\begin{equation}\label{eq: m R}
			{\lim_{n\to \infty} \frac{n^{1-a}}{m_n} \to 0}\,. 
		\end{equation} 
		Under hypothesis \ref{Cov:bounded}, for each $j$, we define the stopping time 
		\begin{equation}\label{def.tau1}
			\tau^j=\inf\lt\{s\ge 0:|B^j(s)|\ge r_0  \rt\}
		\end{equation}
		where $r_0>0$ is chosen so that 
		\begin{equation}\label{def.r0}
			\inf_{|x|< 2 r_0}\gamma(x)>0\,.
		\end{equation}
		Such a constant always exists since $\gamma$ is continuous and $\gamma(0)>0$. Under hypothesis \ref{Cov:Dalang}, the stopping times depends on $n$ and an arbitrary domain. More precisely, let  $D$ be an open bounded ball in $\RR^\ell$ which contains 0. For each $j$, $\tau^j=\tau^j_n(D)$ denotes the stopping time
		\begin{equation}\label{def.tau2}
		  	\tau_n^j(D)=\inf\lt\{s\ge 0:B^j(s)\not\in \lt(\frac{t}{n}\rt)^{\frac a2}  D \rt\}\,.
		\end{equation}
		As previously, we denote
		\begin{equation*}
			\K_ {\epsilon_n}(x,y)=\frac{\Z_ {\epsilon_n}(x;t,y)}{p_t(y-x)}\,, 
		\end{equation*}
		omitting the dependence on $t$.
		We note that from \eqref{eqn:FKZ} 
		\begin{align*}
			&\left(\K_{\epsilon_n}(x_0,y) \right)^m 
			\\&= \EE_B \exp \left( \sum_{j=1}^m \int_0^t \int_{\RR^{\ell}} \delta\left(B_{0,t}^j(t-s)+\frac{t-s}{t}x_0 + \frac{s}{t}y-z\right)W_ {\epsilon_n}(ds,dz) - \frac{tm}{2}\gamma_ {\epsilon_n}(0) \right)\\
			&= e^{-\frac{tm}2 \gamma_ {\epsilon_n}(0)} \EE_B e^{\xi_m(x_0,y)}, 
		\end{align*}
		where 
		\begin{equation}
			\xi_m(x_0,y)=\sum_{j=1}^m \int_{0}^t \int_{\RR^{\ell}} \delta\left(B_{0,t}^j(t-s)+\frac{t-s}tx_0 + \frac{s}{t}y-z\right)W_ {\epsilon_n}(ds,dz)\,.
		\end{equation}
		Conditioning on $B$, the variance of $\xi_m(x_0,y)$ is given by 
		\begin{equation*}
			S_m^2=\EE_B (\xi_m(x_0,y)^2)= \sum_{j,k=1}^m\int_{0}^t \gamma_ {\epsilon_n}(B_{0,t}^j(s)-B_{0,t}^k(s))ds\,.
		\end{equation*} 
				For every $\lambda\in(0,\sqrt{2\ell})$, it is evident that
		\begin{align*}
			\EE_B e^{\xi_m(x_0,y) }
			&\geq \EE_B \left \{ e^{\lambda \sqrt{n} S_m(t)}; \xi_m(x_0,y)\geq \lambda \sqrt{n} S_m(t), \min_{1\le k\le m} \tau^k\ge t  \right\}
			\\&=[\EE_B Z_m(n)]\eta_n(x_0,y)  \,,
		\end{align*}
		where we have put
		\begin{equation}
			Z_m(n)= e^{\lambda \sqrt{n} S_m(t) } \1 _{\{\min_{1\le j\le m} \tau^j_n(D)\ge t \}}\,,
		\end{equation}
		and
		\begin{equation}
			\eta_n(x_0,y) := \left[ \EE_B Z_m(n) \right]^{-1} \EE_B \left( Z_m(n) \1 _{\{ \xi_m(x_0,y) \ge \lambda \sqrt{n} S_m(t) \}} \right)\,.
		\end{equation}
		Combining all previous estimates, we arrive at an important inequality
		\begin{equation}\label{tmp:KZ}
			\K_{\epsilon_n} (x_0,y)\ge e^{-\frac t2 \gamma_{\epsilon_n}(0)}[\EE_B Z_m(n)]^{\frac1m}[\eta_n(x_0,y)]^{\frac1m}\,.
		\end{equation}
		It follows that
		\begin{align*}
			\sup_{j=1,\dots,N}\K_{\epsilon_n}(x_0,y_j)
			&\ge N^{-\frac1m}\lt(\sum_{j=1}^N [\K_{\epsilon_n}(x_0,y_j)]^m\rt)^{\frac1m}
			\\&\ge N^{-\frac1m}e^{-\frac t2 \gamma_{\epsilon_n}(0)}[\EE_B Z_m(n)]^{\frac1m} \lt(\sum_{j=1}^N\eta_n(x_0,y_j)  \rt)^{\frac1m}\,.
		\end{align*}
		We put
		\begin{equation}\label{eq:eta R}
			\eta^c_n(x_0)=\left[ \EE_B Z_m(n) \right]^{-1} \EE_B \left( Z_m(n) \1 _{\{ \max_{j=1,\dots,N}\xi_m(x_0,y_j) < \lambda \sqrt{n} S_m(t) \}} \right)\,.
		\end{equation}
		Applying the estimate
		\begin{align*}
			\sum_{j=1}^N\eta_n(x_0,y_j) 
			\ge1- \eta^c_n(x_0)\,,
		\end{align*}
		we obtain
		\begin{align}\label{tmp:Keta}
			\sup_{j=1,\dots,N}\K_{\epsilon_n}(x_0,y_j)
			\ge N^{-\frac1m}e^{-\frac t2 \gamma_{\epsilon_n}(0)}[\EE_B Z_m(n)]^{\frac1m} [1- \eta_n^c(x_0)]^{\frac1m}
		\end{align}
		Noting that $N^{-\frac1m} \lesssim e^{\ell \frac nm} $ and by \eqref{eqn:scalgamma}, $\gamma_{\epsilon_n}(0)=\epsilon_n^{-\frac \alpha2}\gamma_1(0)\lesssim n^{\frac \alpha2a}$, we see that
		\begin{equation}\label{eq: diag goto 0}
			\lim_{n\to\infty}n^{-a}\log\lt(N^{-\frac1m}e^{-\frac t2 \gamma_{\epsilon_n}(0)}\rt)=0\,.
		\end{equation}
		In other words, the factor $N^{-\frac1m}e^{-\frac t2 \gamma_{\epsilon_n(0)}}$ in \eqref{tmp:Keta} is negligible.
		In addition, we claim that for every $\lambda\in(0,\sqrt{2\ell})$ and every $x\in\RR^\ell$
		\begin{equation}\label{claim.eta}
		 	\lim_{n\to\infty}\eta_n^c(x_0)=0\quad \mathrm{a.s.}
		 \end{equation}
		We postpone the proof of this claim till Lemmas \ref{lem:H1} below. It follows that
		\begin{equation}\label{tmp:step2}
			\liminf_{n\to\infty}n^{-a}\log \max_{j=1, \dots,  N}\K_{\epsilon_n}(x_0,y_j)
			\ge \liminf_{n\to\infty}n^{-a}m^{-1}\log\EE_B Z_m(n)\,.
		\end{equation}
		
		\textbf{Step 2:} We will show that
		\begin{equation}\label{lim:ZmR}
				\liminf_{\epsilon\downarrow0,D\uparrow\RR^\ell}\liminf_{n\to\infty} n^{-a}m^{-1} \log\EE_BZ_m(n)\ge   \frac{4- \bar\alpha}4 \lambda^{\frac4{4- \bar\alpha}} \lt(\frac{2t\cee}{2- \bar\alpha} \rt)^{\frac{2- \bar\alpha}{4- \bar\alpha}} \,.
		\end{equation}
		We consider first the hypothesis \ref{Cov:bounded}. Since $\gamma$ is continuous, for any $\epsilon>0$, there is $\delta$ such that whenever $|z|\leq \delta\wedge r_0$, $\gamma(z) \geq \gamma(0)-\epsilon$. Hence,
		\begin{eqnarray*}
		\EE_B Z_m(n) \geq \exp \left\{ \lambda \sqrt{n} \left[ m(m-1) t \left( \gamma(0)-\epsilon \right) \right]^{1/2} \right\} \PP \left( \sup_{0 \leq s \leq t} |B_{0,t}^j (s)|\leq \delta\wedge r_0 \right)^m\,.
		\end{eqnarray*}
		Since as $n \to \infty$, $m \to \infty$ too, we have 
		\begin{eqnarray*}
		 \liminf_{n \to \infty} m^{-1} n^{-1/2} \log \EE Z_m(n) \geq \lambda\sqrt{ t (\gamma(0)-\epsilon)}\,,
		\end{eqnarray*}
		which proves \eqref{lim:ZmR} under \ref{Cov:bounded}.  

		Assume now that \ref{Cov:Dalang} holds.
		We put $t_n=t^{1-a}n^a$	so that $\epsilon_n=\epsilon\frac t{t_n}$.
		The Brownian motion scaling and the relation \eqref{eqn:scalgamma} yield 
		\begin{align*}
			\int_0^t \gamma_{\epsilon_n}(B^j_{0,t}(s)-B^k_{0,t}(s))ds
			&=\frac{t}{t_n} \int_0^{t_n} \gamma_{\epsilon\frac{t}{t_n}}\lt(B^j_{0,t}(s\frac{t}{t_n})-B^k_{0,t}(s\frac{t}{t_n})\rt)ds
			\\&\overset{\mathrm{law}}{=}\frac{t}{t_n}\int_0^{t_n} \gamma_{\epsilon\frac{t}{t_n}}\lt(\sqrt{\frac{t}{t_n}}(B^j_{0,t_n}(s)-B^k_{0,t_n}(s))\rt)ds
			\\&=\lt(\frac t{t_n} \rt)^{1- \frac\alpha2}\int_0^{t_n}\gamma_ \epsilon(B^j_{0,t_n}(s)-B^k_{0,t_n}(s))ds\,.
		\end{align*}
		It follows that
		\begin{align*}
			\EE Z_m(n)=\EE\lt[\exp\lt\{\lambda\lt(t_n \sum_{j,k=1}^m\int_0^{t_n}\gamma_ \epsilon(B^j_{0,t_n}(s)-B^k_{0,t_n}(s))ds \rt)^{\frac12} \rt\};\min_{1\le j\le m}\tau^j_D\ge t_n\rt]\,,
		\end{align*}
		where 
		\begin{equation*}
		  	\tau_D^j=\inf\lt\{s\ge 0:B^j(s)\not\in  D \rt\}\,.
		\end{equation*}
		Let $K_ \epsilon$ be the function defined by 
		\begin{equation*}
			K_ \epsilon(x)=(2 \pi)^{-\ell}\int_{\RR^\ell}e^{i \xi\cdot x-\frac \epsilon2|\xi|^2}\sqrt{\mu(\xi)}d \xi
		\end{equation*}
		so that
		\begin{equation}\label{eqn:gKK}
			\gamma_ \epsilon(x)=\int_{\RR^\ell}K_ \epsilon (y)K_ \epsilon(x-y)dy\,.
		\end{equation}
		Hence, we can write
		\begin{align*}
			\lt(t_n\sum_{j,k=1}^m\int_0^{t_n}\gamma_ \epsilon(B^j_{0,t_n}(s)-B^k_{0,t_n}(s))ds \rt)^{\frac12}
			=\lt( t_n\int_0^{t_n}\int_{\RR^\ell}\lt|\sum_{j=1}^m K_ \epsilon(x-B^j_{0,t_n}(s)) \rt|^2 dxds \rt)^{\frac12}\,.
		\end{align*}
		Let $\mathcal D$ be the set of compactly supported continuous functions on $\RR^\ell$ with unit $L^2(\RR^{\ell})$-norm. For every $f\in \mathcal D$, applying Cauchy-Schwarz inequality, we see that the right-hand side in the equation above is at least
		\begin{equation*}
			\sum_{j=1}^m\int_0^{t_n}\int_{\RR^\ell}f(x) K_ \epsilon\lt(x-B^j_{0,t_n}(s)\rt)dxds=\sum_{j=1}^m\int_0^{t_n}\bar f_ \epsilon\lt(B^j_{0,t_n}(s)\rt)ds\,,
		\end{equation*}
		where we have set
		\begin{equation*}
			\bar f_ \epsilon(x)=\int_{\RR^\ell}f(y)K_ \epsilon(y-x)dy\,.
		\end{equation*}
		Using independency of Brownian motions, we obtain
		\begin{equation*}
			\EE_B Z_m(n)\ge \lt(\EE_B\lt[\exp\lt\{\lambda\int_0^{t_n}\bar f_ \epsilon\lt(B_{0,t_n}(s)\rt)ds \rt\};\tau_D\ge t_n \rt] \rt)^m\,,
		\end{equation*}
		where $\tau_D := \inf\{s \geq 0: B(s) \notin D\}$. 
		Applying Lemma \ref{lem:expBB} we obtain
		\begin{align*}
			\liminf_{n\to\infty}\frac1{mt_n}\log\EE_B Z_m(n)\ge \sup_{g\in\cgg_D}\lt\{\lambda\int_D \bar f_ \epsilon(x)g^2(x)dx-\frac12\int_D|\nabla g(x)|^2dx \rt\}\,.
		\end{align*}
		We now let $D\uparrow\RR^\ell$ to get
		\begin{equation*}
			\liminf_{D\uparrow\RR^\ell}\liminf_{n\to\infty}\frac1{mn^a }\log\EE_B Z_m(n)\ge t^{1-a} \sup_{g\in\cgg}\lt\{\lambda\int_{\RR^\ell} \bar f_ \epsilon(x)g^2(x)dx-\frac12\int_{\RR^\ell}|\nabla g(x)|^2dx \rt\}\,.
		\end{equation*} 
		We now link the variation on the right-hand side with $\cmm(\gamma)$ by observing that
		\begin{align}\label{tmp:fgm}
			\sup_{f \in \mathcal{D}}\sup_{g\in\cgg}\lt\{\lambda\int_{\RR^\ell} \bar f_ \epsilon(x)g^2(x)dx-\frac12\int_{\RR^\ell}|\nabla g(x)|^2dx \rt\}=\cmm(\lambda^2\gamma_ \epsilon)\,.
		\end{align} 
		Indeed, for each fixed $g\in\cgg$, applying Fubini's theorem, Hahn-Banach theorem and \eqref{eqn:gKK}, we have
		\begin{align*}
			\sup_{f\in \mathcal D} \int_{\RR^\ell}\bar f_ \epsilon(x)g^2(x)dx
			&=\sup_{f\in \mathcal D}\int_{\RR^\ell}f(y)\int_{\RR^\ell}K_ \epsilon(y-x)g^2(x)dxdy
			\\&=\lt(\int_{\RR^\ell}\lt|\int_{\RR^\ell}K_ \epsilon(y-x)g^2(x)dx \rt|^2dy \rt)^{\frac12}
			\\&=\lt(\int_{\RR^\ell}\int_{\RR^\ell}\gamma_ \epsilon(x-y)g^2(x)g^2(y)dxdy \rt)^{\frac12}\,.
		\end{align*}
		This leads us the identity \eqref{tmp:fgm}.
		We can send $\epsilon\downarrow0$ and apply Proposition \ref{prop:ME} to obtain \eqref{lim:ZmR} under hypothesis \ref{Cov:Dalang}.
		
		\textbf{Step 3:} Combining the inequalities \eqref{tmp:step2} and \eqref{lim:ZmR}  together, we have for every $\lambda\in(0,\sqrt{2\ell})$
		\begin{equation*}
			\liminf_{n\to\infty}\sup_{j =1, \dots, N}\K_ {\epsilon_n}(x_0,y_j)\ge \frac{4- \bar\alpha}4 \lambda^{\frac4{4- \bar\alpha}} \lt(\frac{2t\cee}{2- \bar\alpha} \rt)^{\frac{2- \bar\alpha}{4- \bar\alpha}}\,.
		\end{equation*}
		Finally we let $\lambda \to \sqrt{2\ell}^-$ to conclude the proof.
	\end{proof}
	We now provide the proof of \eqref{claim.eta}.
		\begin{lemma}\label{lem:H1}
			For every $\lambda\in(0,\sqrt{2\ell})$, we have
			\begin{equation}\label{eq: eta goto 0 a.s.}
				\lim_{n \to \infty}\eta_n^c(x_0) = 0 \quad \mathrm{a.s.}
			\end{equation}
			where we recall $\eta_n^c$ is defined in \eqref{eq:eta R}.
		\end{lemma}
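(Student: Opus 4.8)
The plan is to show that $\EE_W[\eta_n^c(x_0)]$ tends to $0$ fast enough to be summable; since $0\le\eta_n^c\le1$, Borel--Cantelli then yields \eqref{eq: eta goto 0 a.s.}. First recall that $Z_m(n)$ and $S_m(t)$ are functionals of $B=(B^1,\dots,B^m)$ alone, that $B$ is independent of the noise, and that, conditionally on $B$, the vector $(\xi_m(x_0,y_1),\dots,\xi_m(x_0,y_N))$ is centred Gaussian with $\mathrm{Var}_W(\xi_m(x_0,y_j)\mid B)=S_m(t)^2$ for every $j$ (the shifts $\tfrac st y_j$ cancel on the diagonal) and, writing $\gamma_\#=\gamma$ under \ref{Cov:bounded} and $\gamma_\#=\gamma_{\epsilon_n}$ under \ref{Cov:Dalang},
\[
\mathrm{Cov}_W\!\big(\xi_m(x_0,y_j),\xi_m(x_0,y_k)\mid B\big)=\sum_{p,q=1}^{m}\int_0^t\gamma_\#\!\Big(B^p_{0,t}(t-s)-B^q_{0,t}(t-s)+\tfrac st(y_j-y_k)\Big)\,ds .
\]
Hence, by Fubini and the independence of $B$ and the noise,
\[
\EE_W[\eta_n^c(x_0)]=\frac{\EE_B\big[Z_m(n)\,p_n(B)\big]}{\EE_B[Z_m(n)]},\qquad p_n(B):=\PP_W\Big(\max_{1\le j\le N}\xi_m(x_0,y_j)<\lambda\sqrt n\,S_m(t)\ \Big|\ B\Big),
\]
so everything reduces to a uniform upper bound on $p_n(B)$ for $B$ in the support $\{\min_j\tau^j\ge t\}$ of $Z_m(n)$.

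On that support the Brownian bridges are confined --- to a ball of radius $O(r_0)$ under \ref{Cov:bounded} (by \eqref{def.tau1}) and to $(t/n)^{a/2}D$ under \ref{Cov:Dalang} (by \eqref{def.tau2}) --- while the $y_j$'s are $d$-separated (\eqref{y.pts}). Consequently the off-diagonal covariances above have arguments of modulus bounded below (by $\gtrsim d$ except on an $O(r_0)$-fraction of the time interval, under \ref{Cov:bounded}; by a positive constant except on an $o(1)$-fraction, under \ref{Cov:Dalang}), and, using that $\gamma$ vanishes at infinity (take $d$ large, under \ref{Cov:bounded}) or that the scaling relation \eqref{eqn:scalgamma} together with $\epsilon_n=\epsilon(t/n)^a\downarrow0$ forces $\gamma_{\epsilon_n}$ to behave like a point mass away from $0$ (under \ref{Cov:Dalang}), one shows that the conditional correlations $\mathrm{Cov}_W(\xi_m(x_0,y_j),\xi_m(x_0,y_k)\mid B)/S_m(t)^2$ are, uniformly in $n$, $j\ne k$ and $B$ on the support, either small (bounded by a function of $d$ only) or $\to0$. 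In short, conditionally on $B$, $(\xi_m(x_0,y_j)/S_m(t))_{j\le N}$ is a weakly dependent standard Gaussian field indexed by $N\gtrsim e^{\ell n}$ points.

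The crux is then a sharp lower bound for the conditional maximum of this field: extreme-value heuristics, made rigorous using the weak dependence, give, for every $\lambda<\sqrt{2\ell}$,
\[
p_n(B)\le\exp\!\big(-c\,e^{c'n}\big)
\]
with $c,c'>0$ independent of $n$ and of $B$ on the support of $Z_m(n)$. One way to carry this out is to split, for each $j$, $\xi_m(x_0,y_j)=A_j+D_j$ into the contributions of the times at which the integrand is centred near $x_0$ and those at which it is centred near $y_j$: $(A_j)$ and $(D_j)$ are independent given $B$, the $D_j$ are (asymptotically) conditionally independent and carry a proportion of $S_m(t)^2$ that can be kept above $\lambda^2/(2\ell)$, so that conditioning on the noise generating $(A_j)$ reduces $p_n(B)$ to a product of $N$ Gaussian tails, which is $\le\exp(-e^{cn})$ since $\lambda^2/(2\ell)<1$; alternatively one runs a second-moment (Paley--Zygmund) estimate on the number of exceedances $\#\{j:\xi_m(x_0,y_j)\ge\lambda\sqrt n\,S_m(t)\}$, controlling its conditional variance by the near-decorrelation above. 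Plugging this bound into the first paragraph gives $\EE_W[\eta_n^c(x_0)]\le\exp(-c\,e^{c'n})$ --- here the hypothesis \eqref{eq: m R} is used only to ensure $m=m_n\to\infty$, so that the confinement and correlation estimates hold with uniform constants --- which is summable, and Borel--Cantelli concludes.

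The main obstacle is precisely this sharp maximum bound with the correct threshold $\sqrt{2\ell}$: one must quantify, uniformly in $n$ and over $B$ in the support of $Z_m(n)$, both the conditional decorrelation of $\xi_m(x_0,y_j)$ and $\xi_m(x_0,y_k)$ and the fraction of the conditional variance $S_m(t)^2$ that survives in the well-separated component, keeping track of the interplay between the auxiliary parameter governing the time split and the effective separation of the $y_j$'s. This is exactly what the separation $|y_j-y_k|\ge d$ (with $d$ large, depending only on $\gamma$, under \ref{Cov:bounded}) and the scaling device $\epsilon_n=\epsilon(t/n)^a$ with confinement box $(t/n)^{a/2}D$ (under \ref{Cov:Dalang}) are designed to provide, following and adapting the argument of Xia Chen in \cite{MR3474477}.
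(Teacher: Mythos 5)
Your reduction is the same as the paper's: condition on $\mathcal{B}=\sigma(B^1,\dots,B^m)$, note that $(\xi_m(x_0,y_j))_{j\le N}$ is then a centred Gaussian vector with common conditional variance $S_m^2$, and prove that on the support of $Z_m(n)$ the off-diagonal conditional covariances are at most $\rho S_m^2$ for a small $\rho$ (your covariance estimates, including the time-splitting at $\varkappa t$ under \ref{Cov:bounded} and the $\epsilon_n$-scaling under \ref{Cov:Dalang}, match \eqref{eq:cov leq var}). The gap is at the step you yourself label ``the crux'': you assert, but do not prove, that weak dependence yields $p_n(B)\le\exp(-c\,e^{c'n})$ uniformly over $B$ in the support of $Z_m(n)$. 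The two routes you gesture at (a decomposition $\xi_m(x_0,y_j)=A_j+D_j$ with ``asymptotically conditionally independent'' $D_j$ carrying a variance fraction above $\lambda^2/(2\ell)$, or a Paley--Zygmund count of exceedances) are not carried out, and the first one as stated cannot give the bound you claim: after conditioning on the part of the noise generating the common components, you must still control the event that this common part is atypically negative, and that event has probability only singly exponentially small in $n$, not doubly. This is precisely why the paper's bound is $Ce^{-(\ell+1)n}$ rather than $\exp(-e^{cn})$.

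The paper closes this step by invoking a specific quantitative Gaussian inequality (Lemma 4.2 of \cite{MR3178468}): for identically distributed components with pairwise correlations at most $\rho<\tfrac12$,
\begin{equation*}
\PP\Big\{\max_{k\le N}\xi_k\le A\Big\}\le\Big(\PP\big\{\xi_1\le\sqrt{1+2\rho}\,(A+B)\big\}\Big)^{N}+\PP\Big\{U\ge B/\sqrt{2\rho\,\mathrm{Var}(\xi_1)}\Big\},
\end{equation*}
applied with $A=\lambda S_m\sqrt n$, $B=\kappa S_m\sqrt n$ and $\kappa,\rho$ chosen so that $(1+2\rho)(\lambda+\kappa)^2/2<\ell$ and $\kappa^2/(4\rho)>\ell+1$; the first term is doubly exponentially small and the second is $e^{-(\ell+1)n}$, which after Chebyshev and Borel--Cantelli gives \eqref{eq: eta goto 0 a.s.}. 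To repair your argument you should either cite such an inequality or prove one; as written, the passage from ``correlations at most $\rho$'' to the uniform bound on $p_n(B)$ is exactly the missing ingredient, and it is where the threshold $\sqrt{2\ell}$ is actually used.
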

				


		\begin{proof}
		Assuming first that \ref{Cov:bounded} holds.	
		We recall that $\epsilon_n=0$ in this case so that $\gamma_{\epsilon_n}=\gamma$.
			Let $\mathcal{B}$ be the $\sigma$-field generated by the Brownian motions $\{B^j\}_{1\leq j \leq m}$. First we will show that for any $0 < \rho < \frac12$, we can find $d>0$ sufficiently large so that on the event $\{\min_{1\le j\le m}\tau^j\ge t\}$, for every $z,z'\in B(0,e^n)$ with $|z -z'|\ge d$. 
			\begin{equation}\label{eq:cov leq var}
			\text{Cov} \left( \xi_m(x_0,z), \xi_m(x_0,z') \Big|\mathcal{B}\right) \leq \rho S_m^2\,.
			\end{equation}
			We recall that $d$ and $\tau^j$ are defined in \eqref{y.pts} and \eqref{def.tau1} respectively. 
			We choose and fix $\varkappa\in(0,1)$ such that 
			\begin{equation}
				\varkappa \gamma(0) \leq \frac{1}{2}\rho\inf_{|x|\le 2 r_0}\gamma(x) \,.
			\end{equation}
			Note that on the event $\{\min_{1\le j\le m}\tau^j\ge t\}$, we have $\sup_{s\le t,j\le m}|B^j_{0,t}(s)|\le r_0$.
			Then for every $j,k\le m$, 
			\begin{align*}
				\int_0^{\varkappa t}\gamma\left(B_{0,t}^j(t-s) - B_{0,t}^k(t-s)+ \frac{s}{t} (z-z')\right) ds 
				&\le \varkappa t \gamma(0)
				\\&\le \frac \rho2\int_0^t\gamma\left(B_{0,t}^j(t-s) - B_{0,t}^k(t-s)\right) ds \,.
			\end{align*}
			In addition, from \eqref{eqn:gamspec} and Riemann-Lebesgue lemma, $\lim_{x\to\infty} \gamma(x)=0$. Hence, when $s\in [\varkappa t , t]$, we can choose $d$ large enough such that whenever $|y|\le 2r_0$ and $|z-z'|\ge d$
			\begin{equation*}
				\gamma(y+\frac st (z-z') )\le \frac \rho2 \gamma(y)	\,.
			\end{equation*} 
			In particular, for every $|z-z'|\ge d$ we have
			\begin{equation}
			\gamma\left(B_{0,t}^j(t-s) - B_{0,t}^k(t-s)+ \frac{s}{t} (z-z')\right) \leq \frac{\rho}{2} \gamma\left(B_{0,t}^j(t-s) - B_{0,t}^k(t-s)\right)\,.
			\end{equation}
			It follows that
			\begin{align*}
				&\text{Cov}\left( \xi_m(t,z), \xi_m(t,z') \Big| \mathcal B\right)\\
				&=\sum_{j, k=1}^m \int_0^t \gamma\left( B_{0,t}^j(t-s) -  B_{0,t}^k(t-s) + \frac{s}{t} (z-z')\right) ds
				\\&\le \rho \sum_{j, k=1}^m \int_0^t \gamma\left( B_{0,t}^j(t-s) -  B_{0,t}^k(t-s) \right) ds\,,
			\end{align*}
			which verifies \eqref{eq:cov leq var}.

			Since $\lambda < \sqrt{2\ell}$, we can choose $\kappa, \rho \in(0,\frac12)$ sufficiently small so 
			\begin{equation}
			\frac{(1+2\rho)(\lambda+\kappa)^2}{2} < \ell \quad \text{and} \quad \frac{\kappa^2}{4\rho} > \ell+1\,.
			\end{equation}
			Let us now recall Lemma 4.2 in \cite{MR3178468}. For a mean zero n-dimensional Gaussian vector $(\xi_1, \cdots, \xi_n)$ with identically distributed components, 
			\begin{equation}
			\max_{i \neq j} \frac{|\text{Cov} (\xi_i, \xi_j)|}{ \text{Var}(\xi_1)} \leq \rho< \frac{1}{2}
			\end{equation}
			and for any $A,B >0$, we have
			\begin{equation}
			\PP \left\{ \max_{k \leq n} \xi_k \leq A \right\} \leq \left( \PP \left\{ \xi_1 \leq \sqrt{1+2\rho} (A+B)\right\}\right)^n + \PP \left \{ U \geq B/\sqrt{2\rho \text{Var} (\xi_1)} \right\}
			\end{equation}
			where $U$ is a standard normal random variable. Applying this inequality conditionally with $A=\lambda S_m(t) \sqrt{n}$ and $B= \kappa S_m \sqrt{n}$, we have for sufficiently large $n$,
			\begin{align*}
			&\PP \left\{ \max_{j=1,\dots,N} \xi_m(x_0,y_j) < \lambda \sqrt{n} S_m \Big| \mathcal{B}   \right\}\\
			&\leq \left(\PP \left\{ U \leq \sqrt{1+2\rho} (\lambda+\kappa) \sqrt{n}\right\} \right)^{N} + \PP \left\{ U \geq \frac\kappa{\sqrt{2\rho}} \sqrt{n} \right\}\\
			&\leq \exp \left\{ - (1+o(1)) C e^{vn} \right\} + e^{-(\ell+1)n} \leq C e^{-(\ell+1)n}\,,
			\end{align*}
			where $v>0$ is independent of $n$. Now for any $\theta>0$, this yields
			\begin{align*}
				\PP(\eta_n^c(x_0)\ge \theta)&\le \theta^{-1}\EE \eta_n^c(x_0) 	
				\\&=(\theta\EE Z_m(n))^{-1}\EE\lt[Z_m(n)\PP\left\{ \max_{j=1,\dots,N} \xi_m(x_0,y_j) < \lambda \sqrt{n} S_m \Big| \mathcal{B}   \right\}\rt]
				\\&\lesssim Ce^{-(\ell+1)n}\,.
			\end{align*} 
			An application of Borel-Cantelli lemma yields (\ref{eq: eta goto 0 a.s.}) under hypothesis \ref{Cov:bounded}.

			We now consider the hypothesis \ref{Cov:Dalang}.
			The argument is similar to the previous case. There is, however, an additional scaling procedure. Recall that $\mathcal{B}$ is the $\sigma$-field generated by the Brownian motions $\{B^j\}_{1\leq j \leq m}$. We choose $d=1$. It suffices to prove \eqref{eq:cov leq var} on the event $\{ \min_{0\le j\le m} \tau^j\ge t \}$, for any $|z-z'|\ge1$.
			Indeed, we have
			\begin{align*}
				\text{Cov}\left( \xi_m(x_0,z), \xi_m(x_0,z')\Big| \mathcal B \right)
				=\sum_{j, k=1}^m \int_0^t  \gamma_{\epsilon_n} \left( B_{0,t}^j(t-s) -  B_{0,t}^k(t-s) + \frac{s}{t} (z-z')\right) ds\,.
			\end{align*}
			For every $j,k\le m$, using the scaling relation \eqref{eqn:scalgamma},
			we can write
			\begin{multline*}
				\gamma_{\epsilon_n}\left(B_{0,t}^j(t-s) - B_{0,t}^k(t-s)+ \frac{s}{t} (z-z')\right)
				\\=\epsilon_n^{-\frac \alpha2} \gamma_{1}\left(\epsilon_n^{-\frac12}(B_{0,t}^j(t-s) - B_{0,t}^k(t-s))+ \epsilon_n^{-\frac12}\frac{s}{t} (z-z')\right) \,.
			\end{multline*}
			We now choose and fix $\theta>0$ such that 
			\begin{equation}
			\theta   \leq\frac{\rho}{2\gamma_1(0)}\inf_{x\in \epsilon^{-1/2} D} \gamma_1(x)\,,
			\end{equation}
			this is always possible since $\gamma_1=p_{2}*\gamma$ is a strictly positive function.
			It follows that
			\begin{align*}
				&\epsilon_n^{-\frac \alpha2}\int_0^{\theta t} \gamma_{1}\left(\epsilon_n^{-\frac12}(B_{0,t}^j(t-s) - B_{0,t}^k(t-s))+ \epsilon_n^{-\frac12}\frac{s}{t} (z-z')\right)ds
				\\&\le\epsilon_n^{-\frac \alpha2} \theta t \gamma_1(0)
				\\&\le \frac \rho2\epsilon_n^{-\frac \alpha2}\int_0^t\gamma_{1}\left(\epsilon_n^{-\frac12}(B_{0,t}^j(t-s) - B_{0,t}^k(t-s))\right)ds 
				\\&= \frac \rho2\int_0^t\gamma_{\epsilon_n}\left(B_{0,t}^j(t-s) - B_{0,t}^k(t-s)\right)ds \,.
			\end{align*}
			In addition, on the event $\{\min_{0\le j\le m} \tau^j\ge t \}$, $\epsilon_n^{-\frac12}(B_{0,t}^j(t-s)-B_{0,t}^k(t-s))$ belongs to $2\epsilon^{-\frac12}D$ for all $s\in[0,t]$. Hence, for every $s\in[\theta t,t]$ and $|z-z'|\ge1$, we have
			\begin{equation*}
				\lt|\epsilon_n^{-\frac12}(B_{0,t}^j(t-s)-B_{0,t}^k(t-s))+ \epsilon_n^{-\frac12}\frac{s}{t} (z-z')\rt|\ge \theta  \epsilon_n^{-\frac12}- 2 \epsilon^{-\frac12} \mathrm{diag}(D)\,.
			\end{equation*}
			We note that from Riemann-Lebesgue lemma, $\lim_{x\to\infty} \gamma_1(x)=0$.
			Hence, whenever $n$ is sufficiently large, 
			\begin{equation*}
				\gamma_1(y)\le \frac \rho2\inf_{x\in \epsilon^{-1/2} D}\gamma_1(x) 	
			\end{equation*}
			for all $|y|\ge \theta  \epsilon_n^{-\frac12}- 2 \epsilon^{-\frac12} \mathrm{diag}(D)$. It follows that for every $z,z'$ with $|z-z'|\ge1$,
			\begin{align*}
				&\epsilon_n^{-\frac \alpha2}\int_{\theta t}^t \gamma_{1}\left(\epsilon_n^{-\frac12}(B_{0,t}^j(t-s) - B_{0,t}^k(t-s))+ \epsilon_n^{-\frac12}\frac{s}{t} (z-z')\right) ds
				\\&\le\epsilon_n^{-\frac \alpha2}\frac \rho2  \int_{\theta t}^t\gamma_{1}\left(\epsilon_n^{-\frac12}(B_{0,t}^j(t-s) - B_{0,t}^k(t-s))\right) ds
				\\&\le \frac{\rho}{2} \int_{0}^t\gamma_{\epsilon_n}\left(B_{0,t}^j(t-s) - B_{0,t}^k(t-s)\right)ds\,.
			\end{align*}
			Upon combining these estimates, we arrive at \eqref{eq:cov leq var}, which in turn, implies \eqref{eq: eta goto 0 a.s.}. 
		\end{proof}
\subsection{Proofs} 
\label{sec:proof}
The Theorems \ref{thm:H1Z}, \ref{thm:H23Z} and \ref{thm:uep} follow from the asymptotic results from the previous two subsections. Indeed, Theorem \ref{thm:H1Z} follows by combining the upper bound in Theorem \ref{thm:Z up} and the lower bound Theorem \ref{thm:low}. To obtain Theorem \ref{thm:H23Z}, we first observe that from \eqref{rep:u}, 
\begin{equation}\label{eq: u less Z}
\frac{u(t,y)}{p_t*u_0(y)}\leq \sup_{x \in \supp u_0} \frac{\Z(x; t,y)}{p_t(x-y)}\,.
\end{equation}
Then, an application of Theorem \ref{thm:Z up} yields the result. For Theorem \ref{thm:uep}, the upper bound of \eqref{lim:H2epup} follows from Remark \ref{rmk: Z epsilon up} and the bound \eqref{eq: u less Z} with $u,\Z$ replaced respectively by $u_{\epsilon}, \Z_ \epsilon$,  together with the obvious fact that $\mathcal{E}_H(\gamma_{\epsilon})\leq \mathcal{E}_H(\gamma)$, see \eqref{eqn:cee}.  The lower bound of \eqref{lim:H2ep} is immediate from Theorem \ref{thm:low}.

\section*{Acknowledgment} 
The project was initiated while both authors are visiting the Mathematical Sciences Research Institute in Berkeley, California, during the Fall 2015 semester.  The first named author thanks Professor Davar Khoshnevisan for stimulating discussions and encouragement. The second named author thanks PIMS for its support through the Postdoctoral Training Centre in Stochastics. Both authors thank Professor Yaozhong Hu and Professor David Nualart for support and encouragement. 

\bibliography{../bib}
\end{document}